\newtheorem{thm}{Theorem}[section]
\newtheorem{defn}[thm]{Definition}
\newtheorem{lem}[thm]{Lemma}
\newtheorem{cor}[thm]{Corollary}
\newtheorem{prop}[thm]{Proposition}
\newtheorem{ex}[thm]{Example}
\newtheorem{rem}[thm]{Remark}
\DeclareMathOperator{\qec}{QEC}
\DeclareMathOperator{\diag}{diag}
\DeclareMathOperator{\rk}{rank}
\newcommand{\dg}{\ensuremath{D_G}}
\def\be{\begin{equation}}
\def\ee{\end{equation}}
\def\bea{\begin{eqnarray}}
\def\eea{\end{eqnarray}}
\numberwithin{equation}{section}
\begin{document}
	
	\title[Quadratic Embedding Constants of Cartesian Products and Joins of Graphs]{Quadratic Embedding Constants of Cartesian Products and Joins of Graphs}

	\author{Projesh Nath Choudhury}
	\address[P.N.~Choudhury]{Department of Mathematics, Indian Institute of Technology Gandhinagar,
		Palaj, Gandhinagar 382355, India}
	\email{\tt projeshnc@iitgn.ac.in}

	\author{Raju Nandi}
	\address[R.~Nandi]{Department of Sciences, Indian Institute of Information Technology, Design and Manufacturing, Kurnool, Andhra Pradesh 518008, India}
	\email{\tt rajunandirkm@iiitk.ac.in, rajunandirkm@gmail.com}
	
	\date{\today}
	
	\begin{abstract}
		The quadratic embedding constant (QEC) of a finite, simple, connected graph originated from the classical work of Schoenberg  [\textit{Ann.\ of\ Math.}, 1935] and [\textit{Trans.\ Amer.\ Math.\ Soc.}, 1938] on Euclidean distance geometry. In this article, we study the QEC of graphs in terms of two graph operations: the Cartesian product and the join of graphs. We derive a general formula for the QEC of the join of an arbitrary graph with a regular graph and with a complete multipartite graph. As an application of these results, we explicitly compute the QEC for several classes of graphs and provide new examples of graphs of QE class. We also establish a lower bound for the quadratic embedding constant of the Cartesian product of two arbitrary connected graphs. Furthermore, as an extremal case, we derive concise formulas for the quadratic embedding constants of the Cartesian product of an arbitrary graph G with a complete graph and with a complete bipartite graph, expressed in terms of $\qec(G)$.
	\end{abstract}
	
	\keywords{Quadratic embedding constant, distance matrix, adjacency matrix,  graph join, Cartesian product of graphs}
	
	\subjclass[2020]{05C12 (primary); %
		05C50, 05C76 (secondary)}

 \maketitle
 
\section{Introduction}
\textit{All graphs in this paper are assumed to be finite, simple, and unweighted. Given a graph $G=(V,E)$, we define $C(V):$ the space of all $\mathbb{R}$-valued functions on $V$, $e:$ the constant function in $C(V)$ taking value $1$, $J :$ matrix with all entries one and $\langle \cdot,\cdot \rangle :$ the canonical inner product on $C(V)\cong \mathbb{R}^{|V|}$, $A_G$: the adjacency matrix of $G$ whose $(v,w)$ entry is 1 if $v$ is adjacent to $w$, and 0 otherwise, for all $v,w \in V$. If $G$ is connected, we define $D_G$: the distance matrix of $G$ with the $(v,w)$ entry given by the length of a shortest path connecting $v \neq w \in V$, and $(D_G)_{vv} = 0\ \forall v \in V$. For a positive integer $n$, we define $[n]:=\{1,\ldots,n\}$. Finally, for a square matrix $M$, $\sigma (M)$ denotes the set of all eigenvalues of $M$ and $\sigma_0(M):=\{\lambda \in \sigma(M): \lambda \hbox{ has an associated eigenvector } x \hbox{ with } \langle  e,x\rangle =0\}$.}
\medskip

The goal of this article is to study the quadratic embedding constant of a connected graph  $G$ using the graph operations, the Cartesian product and the join of graphs. A connected graph $G=(V,E)$ is quadratically embeddable into a Euclidean space $\mathcal{H}$ if there exists a map $\psi:V \to \mathcal{H}$ such that $d(i,j)=\parallel \psi (i) - \psi (j) \parallel ^2$ for all $i,j\in V$, where $\parallel \cdot \parallel $ denotes the norm of $\mathcal{H}$. Such a map $\psi$ from $V$ into $\mathcal{H}$  is called a {\it quadratic embedding} of $G$ and if $G$ admits a quadratic embedding, we say that $G$ is of \textit{QE class}. Graphs of QE class have numerous applications in several branches of mathematics, including quantum probability and non-commutative harmonic analysis \cite{mbozejko, bozejko, graham, haagerup,hora,nobata,obata}.  In 1935, Schoenberg \cite{schoenberg} gave a striking characterization of the QE class in terms of the conditional negative definiteness.  A symmetric matrix $A\in \mathbb{R}^{n\times n}$ is said to be conditionally negative definite if $f^TAf \leq 0$  for all\  $f\in \mathbb{R}^n$ with $\langle e,f \rangle = 0$. Schoenberg showed that a connected graph $G$ is of QE class if and only if $D_G$ is conditionally negative definite. Inspired by this result, Obata--Zakiyyah \cite{zakiyyah} introduced the notion of {\it quadratic embedding constant} of $G$, denoted and defined as follows.
\begin{center}
	$\qec(G):= \max\{\langle f,\dg f \rangle ;\  f\in C(V),\  \langle f,f \rangle = 1,\  \langle e,f \rangle = 0 \}.$
\end{center}

In particular, Schoenberg's result states that a connected graph $G$ belongs to the QE class if and only if $\qec (G)\leq 0$. Since then, the QEC of graphs has been studied by several authors \cite{baskoro,lou,wmlotkowski,mo,bata2,bata,zakiyyah}. For instance, the quadratic embedding constants of some well-known classes of graphs, including the complete graph, cycle, path, and complete multipartite graph are studied in \cite{wmlotkowski,bata,zakiyyah}.  In \cite{pnc}, we provided  tight lower and upper bounds for the QEC of a tree, and showed that it lies between a path and a star for all trees.

Determining the quadratic embedding constant for a more general class of graphs is a challenging problem. A well-studied question involving the QEC asks to study the QEC of a connected graph $G$ by factorizing it into smaller graphs using a graph operation.  In the literature, QEC of graphs is explored using several graph operations, including Cartesian product \cite{zakiyyah}, star product \cite{baskoro,mo}, lexicographic product \cite{lou}, graph joins \cite{lou}, and cluster of graphs \cite{pnc}. In \cite{nbata}, Obata provided the $\qec$ formula for the wheel graph $K_1+C_n$. In \cite{lou}, Lou--Obata--Huang extended this result by giving a formula for $\qec (G_1+G_2)$, when $G_1$ and $G_2$ are regular graphs. Recently, Młotkowski--Obata \cite{wm} gave a formula for the quadratic embedding constant of the join of  $\overline{K}_m$ and an arbitrary graph $G$. Our first main result extends their work by replacing $\overline{K}_m$ with an arbitrary regular graph. We provide a formula for $\qec (G_1+G_2)$, when $G_1$ is a regular graph and $G_2$ is an arbitrary graph. Note that if $G_1+G_2$ is complete then $\qec(G_1+G_2)=-1$.
\begin{thm}\label{theorem1}
	Let $G_1$ and $G_2$ be two disjoint graphs on $m\geq 1$ and $n\geq 1$ vertices, respectively, such that $G_1+G_2$ is not complete. Suppose $G_1$ is an $r$-regular graph. Then \[\qec (G_1+G_2)=-2-\min\left(\bigcup\limits_{i=1}^4(\Lambda_i (G_1+G_2)\cap (-\infty , -1))\right),\] where
	\begin{eqnarray*}
		\Lambda_1 (G_1+G_2)&:=&
			\{r-m\} \cap (\sigma(A_{G_1})\cup \sigma(A_{G_2}-J)), \\
		\Lambda_2 (G_1+G_2)&:=& \{r-2m\} \cap \sigma(A_{G_2}), \\
		\Lambda_3 (G_1+G_2)&:=&\left(\sigma_0(A_{G_1})\cup \sigma_0(A_{G_2})\right)\backslash \{r-m, r-2m\},\\
		\Lambda_4 (G_1+G_2)&:=&\{\lambda \in \mathbb{R}\setminus(\{r-m, r-2m\}\cup \sigma(A_{G_1})\cup \sigma(A_{G_2})): m=(\lambda +2m-r)\langle e,(A_{G_2}-\lambda I)^{-1}e \rangle\}.
	\end{eqnarray*}
\end{thm}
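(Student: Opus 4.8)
The plan is to reduce the computation of $\qec(G_1+G_2)$ to a constrained eigenvalue problem for the adjacency matrix. Since $G_1+G_2$ is connected and not complete, it has diameter exactly $2$: two vertices in the same factor are either adjacent or joined through any vertex of the (nonempty) opposite factor, while vertices in different factors are always adjacent. Hence $D_{G_1+G_2}=2(J-I)-A$, where
\[
A:=A_{G_1+G_2}=\begin{pmatrix} A_{G_1} & J\\ J^{T} & A_{G_2}\end{pmatrix}.
\]
For $f\in C(V)$ with $\langle e,f\rangle=0$ and $\langle f,f\rangle=1$ one has $\langle f,Jf\rangle=\langle e,f\rangle^{2}=0$, so $\langle f,D_{G_1+G_2} f\rangle=-2-\langle f,Af\rangle$ and therefore
\[
\qec(G_1+G_2)=-2-\min\{\langle f,Af\rangle:\ \langle e,f\rangle=0,\ \langle f,f\rangle=1\}.
\]
Testing the right side on $\tfrac{1}{\sqrt2}(\mathbf 1_u-\mathbf 1_v)$ for an adjacent pair $u\sim v$ shows this minimum is $\le-1$, with strict inequality because $G_1+G_2$ is not complete; establishing this strictness is what lets us discard the value $-1$ and keep only candidates in $(-\infty,-1)$.

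Next I would analyse the minimum by Lagrange multipliers. A unit vector $f\perp e$ is a critical point of $f\mapsto\langle f,Af\rangle$ iff $(A-\lambda I)f=c\,e$ for some scalars $\lambda,c$, and then the critical value equals $\lambda$; so the task is to list all such constrained eigenvalues and take the smallest. Writing $f=(x,y)$, splitting $e$ into its all-ones blocks over $V_1,V_2$, and setting $\alpha:=\langle e,x\rangle$, $\beta:=\langle e,y\rangle$, the decisive simplification is that $G_1$ is $r$-regular, so $A_{G_1}e=re$ and $A_{G_1}$ preserves $e^{\perp}$. The first block equation $(A_{G_1}-\lambda I)x=(c-\beta)e$ then forces either an eigenvector of $A_{G_1}$ in $e^{\perp}$ (contributing $\sigma_0(A_{G_1})$) or, along $e$, the scalar relation $\alpha(r-\lambda)=m(c-\beta)$ using $\langle e,e\rangle=m$ on $V_1$. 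The second block gives $(A_{G_2}-\lambda I)y=(c-\alpha)e$, whence $\beta=(c-\alpha)\phi(\lambda)$ with $\phi(\lambda):=\langle e,(A_{G_2}-\lambda I)^{-1}e\rangle$ when $\lambda\notin\sigma(A_{G_2})$. Imposing $\alpha+\beta=0$ and eliminating $\alpha,\beta,c$ (with $\alpha\neq0$) collapses everything to $m=(\lambda+2m-r)\phi(\lambda)$, which is exactly $\Lambda_4$.

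The secular equation is transparent only away from the values where it degenerates, and this is the delicate part. At $\lambda=r-2m$ the coefficient $\lambda+2m-r$ vanishes, so no root of the stated form exists and one must argue directly from the block system that $r-2m$ occurs precisely when $r-2m\in\sigma(A_{G_2})$, giving $\Lambda_2$; here I would record the spectral bound $r\le m-1$, which yields $r-2m<-r$ and hence $r-2m\notin\sigma(A_{G_1})$, justifying both the form of $\Lambda_2$ and the removal of $r-2m$ from $\Lambda_3$. At $\lambda=r-m$ the factor $m$ cancels and the condition collapses to $\phi(r-m)=1$; this I would identify with $r-m\in\sigma(A_{G_2}-J)$ via the matrix-determinant lemma
\[
\det(A_{G_2}-J-\mu I)=\bigl(1-\langle e,(A_{G_2}-\mu I)^{-1}e\rangle\bigr)\det(A_{G_2}-\mu I),
\]
together with a separate treatment when $r-m\in\sigma(A_{G_2})$; combined with the $\sigma(A_{G_1})$ branch this produces $\Lambda_1=\{r-m\}\cap(\sigma(A_{G_1})\cup\sigma(A_{G_2}-J))$. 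Finally the purely internal critical vectors, i.e.\ honest eigenvectors of $A$ lying in $e^{\perp}$, account for $\Lambda_3=(\sigma_0(A_{G_1})\cup\sigma_0(A_{G_2}))\setminus\{r-m,r-2m\}$, the two excluded values being exactly the ones folded into $\Lambda_1,\Lambda_2$ (noting, e.g., that $r-m\in\sigma_0(A_{G_2})$ forces $r-m\in\sigma(A_{G_2}-J)$).

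The concluding step assembles the four families into the full constrained spectrum of $A$, takes the minimum, and applies the reduction to obtain $\qec(G_1+G_2)=-2-\min\bigl(\bigcup_{i=1}^4(\Lambda_i\cap(-\infty,-1))\bigr)$. The main obstacle I anticipate is precisely the bookkeeping at the degenerate values: verifying that the boundary cases $\lambda\in\{r-m,r-2m\}$ and $\lambda\in\sigma(A_{G_1})\cup\sigma(A_{G_2})$ are captured without omission or double counting (in particular the $\sigma$ versus $\sigma_0$ distinctions, the spectral bound that kills $r-2m$ for $A_{G_1}$, and the identification of the $r-m$ condition with the spectrum of $A_{G_2}-J$), and confirming that each candidate below $-1$ is genuinely attained by an admissible unit vector so that the stated minimum is the true one.
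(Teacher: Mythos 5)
Your proposal follows essentially the same route as the paper: reduce to $\qec(G_1+G_2)=-2-\min\{\langle f,A_{G_1+G_2}f\rangle:\langle e,f\rangle=0,\ \|f\|=1\}$, apply Lagrange multipliers to get the block system, exploit $A_{G_1}e=re$ to eliminate, and split the analysis into exactly the paper's four cases ($\lambda=r-m$, $\lambda=r-2m$, $\lambda\in\sigma(A_{G_1})\cup\sigma(A_{G_2})$, and the generic secular equation), including the Gershgorin-type bound ruling out $r-2m\in\sigma(A_{G_1})$. The only cosmetic difference is your use of the matrix-determinant lemma to identify $\phi(r-m)=1$ with $r-m\in\sigma(A_{G_2}-J)$, where the paper reads this off directly from the block equation $(A_{G_2}-J-(r-m)I)y=0$; the plan is sound and correctly anticipates the remaining bookkeeping.
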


As an application of our first main result, we explicitly compute the quadratic embedding constant for several well known classes of graphs and show that each of the above four sets is pivotal in computing $\qec(G)$. 

In the literature, the quadratic embedding constant of a graph join $G_1+G_2$ is studied only when either $G_1$ or $G_2$ is regular \cite{lou,wm,nbata}. So, it is natural to ask about the case in which neither of the graphs is regular. Our next main result addresses this question. In particular, we obtain the quadratic embedding constant of the join of a complete multipartite graph and an arbitrary graph $G$. To state our next main result, we need to define a real-valued function.

For  distinct positive integers $m_{i_1},m_{i_2},\ldots ,m_{i_q}$ and any positive integers $a_1,a_2,\ldots ,a_q$, define the function  $P:\mathbb{R}\setminus\{-m_{i_1},-m_{i_2},\ldots ,-m_{i_q}\}\to \mathbb{R}$ via
\begin{equation}\label{eqplambda}
	P(\lambda):= 1+\sum _{p=1}^q\frac{a_pm_{i_p}}{\lambda +m_{i_p}}.
\end{equation}
Notice that all the zeros of $P$ are negative real numbers.  To see this, it suffices to show that the polynomial $f(\lambda)=  \prod\limits _{p=1}^q(\lambda +m_{i_p})+\sum\limits _{p=1}^qa_pm_{i_p}\prod \limits_{\substack{s=1 \\ s\neq p}}^q (\lambda +m_{i_s})$ has $q$ negative real zeros. Without loss of generality assume that $m_{i_1}>m_{i_2}>\cdots >m_{i_q}$. By a simple calculation one can verify that $f(-m_{i_j})f(-m_{i_{j+1}})<0$  for all $j\in [q-1]$. Thus each interval $(-m_{i_j},-m_{i_{j+1}}),1\leq j\leq q-1$ contains a root of $f$. Since $f$ is a real polynomial with positive coefficients, all $q$ roots of $f$ are negative real numbers. 
\begin{thm}\label{theorem2}
Let $G$ be a graph with $n\geq 1$ vertices and $K_{m_1,m_2,\ldots ,m_k}$ be a complete multipartite graph such that $K_{m_1,m_2,\ldots ,m_k}+G$ is not complete. Suppose $m_{i_1},m_{i_2},\ldots ,m_{i_q}$ are distinct with $m_{i_p}$ appears $a_p$ times for all $p\in[q]$. Let $P$ be the function defined in \eqref{eqplambda} and $\lambda _1,\lambda _2,\ldots , \lambda _q$ be the zeros of $P$. 
Then \[\qec (K_{m_1,\ldots ,m_k}+G)=-2-\min \left(\bigcup\limits_{i=1}^4(\Lambda_i (K_{m_1,\ldots ,m_k}+G)\cap (-\infty , -1))\cup \{-m_{i_p}: p\in[q], a_p\geq 2, m_{i_p}\neq 1\}\right),\] where 
	\begin{eqnarray*}
		\Lambda_1 (K_{m_1,\ldots ,m_k}+G)&:=& \{-m_{i_p}: p\in[q], a_p=1\}\cap\sigma(A_G-J),\\
		\Lambda_2 (K_{m_1,\ldots ,m_k}+G)&:=&\{\lambda _p: p\in[q]\}\cap \sigma(A_G),\\
			\Lambda_3 (K_{m_1,\ldots ,m_k}+G)&:=&\sigma_0(A_G)\backslash \{-m_{i_p},\lambda _p: p\in[q]\},\\
			\Lambda_4 (K_{m_1,\ldots ,m_k}+G)&:=&\{\lambda \in \mathbb{R}\setminus \left(\{-m_{i_p},\lambda _p: p\in[q]\}\cup  \sigma(A_G)\right):  P(\lambda)\langle e,(A_G-\lambda I)^{-1}e \rangle=P(\lambda)-1\}.
		\end{eqnarray*}
\end{thm}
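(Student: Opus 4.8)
The plan is to reduce the computation to a restricted eigenvalue problem and then solve it by exploiting the block structure of the distance matrix. Write $N=m_1+\cdots+m_k$, order the vertices so that the multipartite factor comes first, and let $e_{m_i}$ denote the all‑ones vector on the $i$‑th part. Since in $K_{m_1,\ldots,m_k}+G$ two vertices in a common part are at distance $2$, two vertices in different parts are at distance $1$, every multipartite vertex is at distance $1$ from every vertex of $G$, and two vertices of $G$ are at distance $1$ or $2$ according as they are adjacent in $G$, the distance matrix takes the block form
\[
D=\begin{pmatrix} J_N+\bigoplus_{i=1}^k J_{m_i}-2I_N & J \\ J & 2(J_n-I_n)-A_G\end{pmatrix}.
\]
Putting $B:=D+2I$ and using $\langle f,\dg f\rangle=\langle f,Bf\rangle-2$ for unit $f$, we have $\qec=\max\{\theta:\theta\in\sigma(D|_{e^\perp})\}$, where $D|_{e^\perp}$ denotes the compression of $D$ to $e^\perp$. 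The eigenvalues of $D|_{e^\perp}$ are exactly the scalars $\theta=\beta-2$ for which there exist $0\neq f\perp e$ and a scalar $\alpha$ with $(B-\beta I)f=\alpha e$; writing $\lambda:=-\beta$, the resulting eigenvalue is $-2-\lambda$. So the whole problem is to enumerate the admissible $\lambda$.

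Next I would split $f=(x,y)$ with $x=(x^{(1)},\ldots,x^{(k)})\in\mathbb{R}^N$ and $y\in\mathbb{R}^n$, and write $(B-\beta I)f=\alpha e$ blockwise. Introducing $s=\langle e,x\rangle$, $t=\langle e,y\rangle$, $s_i=\langle e_{m_i},x^{(i)}\rangle$ and using $f\perp e$ (so $t=-s$), the top block forces, in the generic range $\beta\notin\{m_1,\ldots,m_k\}$ and $-\beta\notin\sigma(A_G)$, each $x^{(i)}$ to be a multiple of $e_{m_i}$, namely $x^{(i)}=\tfrac{-\alpha}{\beta-m_i}\,e_{m_i}$, while the bottom block gives $y=(s+2t-\alpha)(A_G+\beta I)^{-1}e$. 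Eliminating the unknowns yields a single scalar secular equation. Using the identities $\sum_i\frac{m_i}{\beta-m_i}=1-P(\lambda)$ and $\langle e,(A_G+\beta I)^{-1}e\rangle=\langle e,(A_G-\lambda I)^{-1}e\rangle$, this equation becomes precisely $P(\lambda)\langle e,(A_G-\lambda I)^{-1}e\rangle=P(\lambda)-1$, the defining relation of $\Lambda_4$.

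The remaining work, which I expect to be the genuine obstacle, is the analysis of the degenerate values of $\beta$ excluded above. (i) Vectors supported inside one part and orthogonal to $e_{m_i}$ lie in $\ker B$, contributing only the eigenvalue $-2$ of $D|_{e^\perp}$, which never affects the maximum; this is also why $\sigma_0(A_{K_{m_1,\ldots,m_k}})$ is absent from the formula. (ii) Vectors $(0,y)$ with $A_Gy=\mu y$ and $y\perp e$ give eigenvalue $-2-\mu$, producing $\Lambda_3$. (iii) When $\beta=m_{i_p}$ (that is $\lambda=-m_{i_p}$) the top block degenerates and forces $\alpha=0$; if the part size is unique ($a_p=1$) the bottom‑block consistency condition $\langle e,(A_G+\beta I)^{-1}e\rangle=1$ appears, which by the rank‑one identity $\det(A_G+\beta I-J)=\det(A_G+\beta I)\bigl(1-\langle e,(A_G+\beta I)^{-1}e\rangle\bigr)$ is equivalent to $-m_{i_p}\in\sigma(A_G-J)$, giving $\Lambda_1$; if the size is repeated ($a_p\geq2$) one builds, from differences of part‑indicators summing to zero and with no $G$‑component, genuine $B$‑eigenvectors of eigenvalue $m_{i_p}$, yielding the eigenvalue $m_{i_p}-2$ of $D|_{e^\perp}$, which exceeds $-1$ exactly when $m_{i_p}\neq1$—this explains the extra set $\{-m_{i_p}:a_p\geq2,\ m_{i_p}\neq1\}$. (iv) When $-\beta=\lambda\in\sigma(A_G)$, a computation using $f\perp e$ shows the bottom‑block right‑hand side equals $-\alpha P(\lambda)e$; obtaining an eigenvector not already counted in $\Lambda_3$ requires $\alpha\neq0$, hence $P(\lambda)=0$, so $\lambda$ is both a zero $\lambda_p$ of $P$ and an element of $\sigma(A_G)$, giving $\Lambda_2$.

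Finally I would assemble the pieces: the eigenvalues of $D|_{e^\perp}$ are exactly the numbers $-2-\lambda$ as $\lambda$ ranges over $\bigcup_{i=1}^4\Lambda_i$ together with the extra set (plus the irrelevant value $-2$), and such an eigenvalue exceeds $-1$ if and only if $\lambda<-1$. Because $K_{m_1,\ldots,m_k}+G$ is not complete we have $\qec>-1$, so the maximum is attained on this part of the spectrum and $\qec=\max\theta=-2-\min(\cdots)$, exactly as claimed. The delicate points are the bookkeeping in cases (iii)–(iv)—producing the $\sigma(A_G-J)$ condition and the repeated‑part eigenvectors—and verifying that the exclusion sets built into the $\Lambda_i$ prevent double counting, so that neither a spurious nor a missing eigenvalue can disturb the minimum.
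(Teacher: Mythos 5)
Your proposal follows essentially the same route as the paper: the reduction of $\qec$ to the restricted eigenvalue problem for the adjacency matrix on $e^{\perp}$ is exactly the paper's Proposition on graph joins combined with the Lagrange-multiplier formulation, and your case split over $\lambda$ (the generic secular equation $P(\lambda)\langle e,(A_G-\lambda I)^{-1}e\rangle=P(\lambda)-1$, the $\sigma_0(A_G)$ case, $\lambda=-m_{i_p}$ with $a_p=1$ versus $a_p\geq 2$ via part-difference eigenvectors, and $\lambda=\lambda_p$ with $P(\lambda_p)=0$) reproduces the paper's sequence of lemmas, including the final restriction to $\lambda<-1$ coming from $\qec>-1$. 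The one detail to repair is in your case (iii): the rank-one determinant identity and the condition $\langle e,(A_G+\beta I)^{-1}e\rangle=1$ presuppose that $A_G+m_{i_p}I$ is invertible, so when $-m_{i_p}\in\sigma(A_G)$ you should instead observe (as the paper does) that after $\alpha=0$ is forced the bottom block together with $f\perp e$ reduces directly to $(A_G-J+m_{i_p}I)y=0$ with $y\neq 0$, which still gives exactly the stated $\sigma(A_G-J)$ criterion.
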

If $q=1$ in the above theorem i.e, $m_1=m_2=\cdots =m_k=m$, then $K_{m,m,\ldots ,m}$ is a regular graph of degree $(k-1)m$ and $-m$ is an eigenvalue of  $A_{K_{m,m,\ldots ,m}}$. A straightforward calculation shows that Theorem \ref{theorem2} can be derived from Theorem \ref{theorem1}.

We next turn our attention to the Cartesian product of graphs. We give a lower bound of $\qec (G_1\times G_2)$ in terms of $\qec (G_1)$ and $\qec (G_2)$.
\begin{prop}\label{qecrelation}
	Let $G_1$ and $G_2$ be two connected graphs with $m\geq 1$ and $n\geq 1$ vertices, respectively. Then
	\begin{equation}\label{extremalinequality}
		\max \{n\qec (G_1),m \qec (G_2)\}\leq\qec (G_1\times G_2).
	\end{equation}
\end{prop}
We next study the extremal case of the inequality \eqref{extremalinequality}. In \cite{zakiyyah}, Obata--Zakiyyah showed that the quadratic embedding constant of the Cartesian product of two graphs of QE class is zero. It is natural to study the case when at least one of the graphs in the Cartesian product does not belong to the QE class. Using Proposition \ref{qecrelation}, we show that the Cartesian product of two connected graphs $G_1$ and $G_2$ is of QE class if and only if both $G_1$ and $G_2$ are of QE class (see Corollary \ref{qeccatprodiff}). Thus, if a graph $G$ is of QE class, then $K_m\times G$ is of QE class and $\qec(K_m\times G)=0$. 
In our next main result, for a connected graph $G$ of non-QE class, we obtain $\qec(K_m\times G)$ in terms of $\qec(G)$.
\begin{thm}\label{theorem4}
	Let $G$ be a connected graph of non-QE class with $n$ vertices and $K_m$ be a complete graph. Then $\qec (K_m\times G)=m\qec (G)$.
	\end{thm}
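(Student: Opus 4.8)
The plan is to exploit the additive structure of distances in a Cartesian product. Writing vertices of $K_m\times G$ as pairs $(i,v)$ with $i\in[m]$ and $v\in V(G)$, the identity $d_{K_m\times G}((i,v),(j,w))=d_{K_m}(i,j)+d_G(v,w)$ together with $D_{K_m}=J_m-I_m$ lets me realize the distance matrix on $C(V(K_m\times G))\cong \mathbb{R}^m\otimes\mathbb{R}^n$ as
\[
D_{K_m\times G}=(J_m-I_m)\otimes J_n+J_m\otimes \dg,
\]
where $J_m,J_n$ denote the all-ones matrices of the indicated sizes and the first (resp.\ second) tensor leg acts on the $K_m$ (resp.\ $G$) coordinate. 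Let $e_m,e_n$ be the constant vectors, so the global constant vector is $e=e_m\otimes e_n$, and set $W:=e_m^{\perp}\subseteq\mathbb{R}^m$.

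First I would split $\mathbb{R}^m\otimes\mathbb{R}^n=(\mathrm{span}(e_m)\otimes\mathbb{R}^n)\oplus(W\otimes\mathbb{R}^n)$ and check that both summands are invariant under $D_{K_m\times G}$, the two pieces being mutually orthogonal. On $\mathrm{span}(e_m)\otimes\mathbb{R}^n$, using $J_m e_m=m e_m$, the vector $e_m\otimes g$ is sent to $e_m\otimes\bigl((m-1)J_n g+m\,\dg g\bigr)$; on $W\otimes\mathbb{R}^n$, using $J_m h=0$ for $h\in W$, the operator collapses to $-I_W\otimes J_n$, which is negative semidefinite since $J_n\succeq0$. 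Because these blocks are orthogonal and invariant, for any $f=e_m\otimes g+f_W$ the quadratic form splits as $\langle f,D_{K_m\times G}f\rangle=\langle e_m\otimes g,\,D_{K_m\times G}(e_m\otimes g)\rangle+\langle f_W,\,D_{K_m\times G}f_W\rangle$ with no cross terms.

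Next I would impose the two constraints of the QEC variational problem. The condition $\langle e,f\rangle=0$ forces $g\perp e_n$, while the $W\otimes\mathbb{R}^n$ part is automatically orthogonal to $e=e_m\otimes e_n$. On $g\perp e_n$ we have $J_n g=0$, so the first block contributes $\langle e_m\otimes g,\,D_{K_m\times G}(e_m\otimes g)\rangle=m^2\langle g,\dg g\rangle$ while $\|e_m\otimes g\|^2=m\|g\|^2$; hence its Rayleigh quotient equals $m\,\langle g,\dg g\rangle/\|g\|^2$, whose maximum over $g\perp e_n$ is exactly $m\,\qec(G)$ by definition. The second block contributes a nonpositive amount. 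Writing $\alpha=\|e_m\otimes g\|^2$ and $\beta=\|f_W\|^2$ with $\alpha+\beta=1$, I obtain $\langle f,D_{K_m\times G}f\rangle\le \alpha\, m\,\qec(G)+\beta\cdot 0$.

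Finally I would invoke the hypothesis that $G$ is of non-QE class, i.e.\ $\qec(G)>0$ by Schoenberg's criterion. Then $m\,\qec(G)>0$, so the upper bound $\alpha\,m\,\qec(G)$ is maximized at $\alpha=1$, giving $\langle f,D_{K_m\times G}f\rangle\le m\,\qec(G)$; equality holds for $f=e_m\otimes g^{*}$ with $g^{*}\perp e_n$ a maximizer for $\qec(G)$, which satisfies both constraints. This yields $\qec(K_m\times G)=m\,\qec(G)$. The only delicate point is precisely this last sign argument: the non-QE assumption is what guarantees that the positive block $\mathrm{span}(e_m)\otimes\mathbb{R}^n$ dominates the (at best zero) contribution of $W\otimes\mathbb{R}^n$; were $\qec(G)\le0$ the two would compete and the maximum would be $0$, recovering the QE-class case noted before the theorem.
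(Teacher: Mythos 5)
Your proof is correct, and it takes a genuinely different route from the paper. You decompose $D_{K_m\times G}=(J_m-I_m)\otimes J_n+J_m\otimes D_G$ and split $\mathbb{R}^m\otimes\mathbb{R}^n$ into the two orthogonal $D_{K_m\times G}$-invariant subspaces $\mathrm{span}(e_m)\otimes\mathbb{R}^n$ and $e_m^{\perp}\otimes\mathbb{R}^n$, bound the quadratic form on each block separately, and use $\qec(G)>0$ to see that the first block wins; this is a direct variational argument on the quadratic form itself. The paper instead works with the Lagrange-multiplier characterization of $\qec$ (its Proposition on stationary points): it writes $f=({x^1}^T,\ldots,{x^m}^T)^T$, subtracts consecutive block equations of $(D_{K_m\times G}-\lambda I)f=\frac{\mu}{2}e$ to get $(J+\lambda I)(x^i-x^{i+1})=0$, uses $\lambda>0$ to force $x^1=\cdots=x^m$, and then exhibits a bijection between the stationary points of $D_{K_m\times G}$ and those of $D_G$ under $\lambda=ma$. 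Your approach is arguably more transparent: it isolates exactly where the non-QE hypothesis is used (were $\qec(G)\le 0$, the kernel of $J_n$ inside $e_m^{\perp}\otimes\mathbb{R}^n$ yields value $0$, recovering the known QE-class case), and it avoids the stationary-point machinery entirely. The paper's method has the advantage of extending uniformly to $K_{m,n}\times G$, where the base distance matrix has a nontrivial eigenvalue on $e^{\perp}$ and the competing block genuinely contributes (the term $l\frac{n(m-2)+m(n-2)}{m+n}$ in Theorem 1.4(ii)); your tensor decomposition would also handle that case but requires diagonalizing $D_{K_{m,n}}$ on $e^{\perp}$ rather than just noting $J_mh=0$. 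The only cosmetic slip is in your final sentence of the equality discussion: the maximizer should be normalized, $f=\frac{1}{\sqrt{m}}\,e_m\otimes g^{*}$ with $\|g^{*}\|=1$, though your Rayleigh-quotient formulation makes this immaterial.
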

	
In our final main result,  we derive $\qec(K_{m,n}\times G)$ in terms of $\qec(G)$ when either $G$ or $K_{m,n}$ is not of QE class.	
	\begin{thm}\label{theorem5}
Let $G$ be a connected graph with $l$ vertices and $K_{m,n}$ be a complete bipartite graph.
\begin{itemize}
	\item [(i)] If $K_{m,n}$ is of QE class and $G$ is of non-QE class, then $\qec (K_{m,n}\times G)=(m+n)\qec (G)$.
	\item [(ii)] If $K_{m,n}$ is of non-QE class, then $\qec (K_{m,n}\times G)=\max \{(m+n)\qec (G),l\frac{n(m-2)+m(n-2)}{m+n}\}$.
\end{itemize}
\end{thm}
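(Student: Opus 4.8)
The plan is to reduce everything to the spectral structure of the product's distance matrix on the orthogonal complement of the constant vector, and then combine it with an explicit evaluation of $\qec(K_{m,n})$. Write $V_1$ for the vertex set of $K_{m,n}$ (so $|V_1|=m+n$) and $V_2$ for that of $G$ (so $|V_2|=l$), and identify $C(V_1\times V_2)$ with $C(V_1)\otimes C(V_2)$. Since distances add along a Cartesian product, the distance matrix factors as $D_{K_{m,n}\times G}=D_{K_{m,n}}\otimes J_l+J_{m+n}\otimes D_G$. Let $e_1,e_2$ denote the constant vectors on $V_1,V_2$, so the global constant vector is $e=e_1\otimes e_2$. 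First I would decompose the constraint space $e^{\perp}$ as the orthogonal direct sum $(e_1\otimes e_2^{\perp})\oplus(e_1^{\perp}\otimes e_2)\oplus(e_1^{\perp}\otimes e_2^{\perp})$.

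The key step is to show that the quadratic form $f\mapsto\langle f,D_{K_{m,n}\times G}f\rangle$ is block diagonal with respect to this splitting. Using $J_{m+n}e_1=(m+n)e_1$, $J_le_2=le_2$, $J_lh=0$ for $h\perp e_2$, and $J_{m+n}g=0$ for $g\perp e_1$, one checks that $D_{K_{m,n}\times G}$ annihilates $e_1^{\perp}\otimes e_2^{\perp}$, sends $e_1\otimes e_2^{\perp}$ into $e_1\otimes C(V_2)$, and sends $e_1^{\perp}\otimes e_2$ into $C(V_1)\otimes e_2$; every cross term between two distinct summands then pairs a vector orthogonal to $e_1$ (resp.\ $e_2$) against a multiple of $e_1$ (resp.\ $e_2$) and hence vanishes. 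Evaluating the diagonal blocks gives, for $f=e_1\otimes h$ with $h\perp e_2$, the Rayleigh quotient $(m+n)\langle h,D_Gh\rangle/\langle h,h\rangle$; for $f=g\otimes e_2$ with $g\perp e_1$, the quotient $l\,\langle g,D_{K_{m,n}}g\rangle/\langle g,g\rangle$; and $0$ on the third summand. Maximizing over each block yields the master identity $\qec(K_{m,n}\times G)=\max\{(m+n)\qec(G),\,l\,\qec(K_{m,n}),\,0\}$. This is exactly the mechanism behind Theorem \ref{theorem4} and Corollary \ref{qeccatprodiff}, now applied with $K_{m,n}$ in place of $K_m$.

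Next I would evaluate $\qec(K_{m,n})$ explicitly. The matrix $D_{K_{m,n}}$ has diagonal blocks $2(J_m-I_m)$ and $2(J_n-I_n)$ and all-ones off-diagonal blocks. Writing $f=(u,v)\in\mathbb{R}^m\oplus\mathbb{R}^n$ and imposing $\langle e,f\rangle=0$ and $\|f\|=1$, the quadratic form collapses to $\langle f,D_{K_{m,n}}f\rangle=2(\sum_iu_i)^2-2$. Cauchy--Schwarz bounds $(\sum_iu_i)^2\le m\|u\|^2$ and $(\sum_jv_j)^2\le n\|v\|^2$, while the constraint forces $\sum_iu_i=-\sum_jv_j$; balancing the two parts then gives $\max(\sum_iu_i)^2=mn/(m+n)$, whence $\qec(K_{m,n})=\tfrac{2mn}{m+n}-2=\tfrac{n(m-2)+m(n-2)}{m+n}$. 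In particular $K_{m,n}$ is of QE class precisely when this quantity is $\le 0$.

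Finally I would assemble the two cases from the master identity. In case (i), $\qec(K_{m,n})\le 0$ and $\qec(G)>0$, so $(m+n)\qec(G)>0\ge l\,\qec(K_{m,n})$ and the maximum equals $(m+n)\qec(G)$. In case (ii), $\qec(K_{m,n})>0$, so $l\,\qec(K_{m,n})>0$ dominates the trailing $0$, and the maximum reduces to $\max\{(m+n)\qec(G),\,l\,\tfrac{n(m-2)+m(n-2)}{m+n}\}$, as claimed. The main obstacle is the block-diagonalization step: one must verify carefully that the off-diagonal couplings of $D_{K_{m,n}\times G}$ all land along the excluded direction $e$ and therefore contribute nothing to the constrained form. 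Once this is in hand, the evaluation of $\qec(K_{m,n})$ and the case split are routine.
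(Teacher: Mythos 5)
Your proof is correct, but it takes a genuinely different route from the paper's. The paper works through Proposition \ref{qecformula}: it writes the stationary-point system for $D_{K_{m,n}\times G}$ in block form, subtracts consecutive equations to force $x^i=x^{i+1}$ within each part of $K_{m,n}$ (using $\lambda>0$), isolates the cross-part equation whose only new eigenvalue is $\lambda=l\frac{n(m-2)+m(n-2)}{m+n}$, and exhibits an explicit stationary vector realizing that value. You instead exploit the Kronecker form $D_{G_1\times G_2}=D_{G_1}\otimes J+J\otimes D_{G_2}$ together with the orthogonal splitting $e^{\perp}=(e_1\otimes e_2^{\perp})\oplus(e_1^{\perp}\otimes e_2)\oplus(e_1^{\perp}\otimes e_2^{\perp})$, on which the compressed quadratic form is block diagonal; this yields the master identity $\qec(G_1\times G_2)=\max\{|V_2|\,\qec(G_1),\ |V_1|\,\qec(G_2),\ 0\}$ for arbitrary connected graphs on at least two vertices, from which Theorem \ref{theorem4}, Theorem \ref{0qec}, Corollary \ref{qeccatprodiff} and both cases of the present theorem follow at once after substituting $\qec(K_{m,n})$ from Theorem \ref{qeccbip} (which you also re-derive correctly). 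Your route buys generality and dispenses with the Lagrange-multiplier bookkeeping entirely; the paper's route stays within the stationary-point framework it already set up for graph joins and produces the extremizer explicitly. Two small remarks: the block computations (e.g.\ $D(e_1\otimes h)=(m+n)\,e_1\otimes D_G h$ for $h\perp e_2$, and the vanishing of all cross pairings) are exactly right but are the load-bearing step and should be written out in full; and the summand $e_1\otimes e_2^{\perp}$, hence the term $(m+n)\qec(G)$, is present only when $l\geq 2$ --- a degenerate case ($G=K_1$) that the paper's own argument likewise leaves implicit and that is vacuous in case (i) since a non-QE graph has more than one vertex.
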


As an immediate consequence of Theorem \ref{theorem5}, one can conclude that for a connected graph $G$ of QE class, the quadratic embedding constant of $K_{m,n}\times G$ does not depend on the structure of $G$.

\begin{cor}\label{cor1}
	Let $m\geq 3,n\geq 2$ and $G$ be a connected graph of QE class on $l$ vertices . Then $\qec (K_{m,n}\times G)=l\frac{n(m-2)+m(n-2)}{m+n}$. In particular,  $\qec (K_{m,n}\times T)=l\frac{n(m-2)+m(n-2)}{m+n}$ for any tree $T$ on $l$ vertices.
\end{cor}
\begin{rem}
By a classical result of Graham--Pollak \cite{Graham-Pollak}, it is well known that the determinant of the distance matrix of tree is independent of the tree structure.  Corollary \ref{cor1} may be regarded being similar in spirit.
\end{rem}

\noindent\textbf{Organization of the paper:} The remaining sections are devoted to proving our main results above. In Section \ref{graphjoinsection}, we recap the definition of graph join, some preliminary results, and prove our first two main results, Theorems \ref{theorem1} and \ref{theorem2}. In the final section, we first recall the Cartesian product of graphs and the quadratic embedding constant of a complete bipartite graph, and then prove Proposition \ref{qecrelation}, and Theorems \ref{theorem4} and \ref{theorem5}. As an application of the main results, we derive the quadratic embedding constant for various classes of graphs and provide new examples of graphs of QE class.

\section{Quadratic embedding constant and Graph Join}\label{graphjoinsection}
We begin by proving Theorems \ref{theorem1} and \ref{theorem2} -- quadratic embedding constant of the join of two graphs with at least one of them is not regular. The proof requires some preliminary results. The first result gives a sharp lower bound for the quadratic embedding constant of a connected graph.
\begin{prop}\label{qeccomplete}\cite{lou}
	Let $G$ be a connected graph on $n$ vertices. Then $\qec(G)\geq -1$ and equality holds if and only if $G$ is a complete graph $K_n$.
\end{prop}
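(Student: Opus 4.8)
The plan is to prove the three assertions---the bound $\qec(G)\geq -1$, the sufficiency of completeness, and its necessity---separately, in each case by substituting an explicitly chosen test function $f$ into the variational definition of $\qec$. Throughout I write $\delta_v\in C(V)$ for the indicator function of $v$, so that any admissible $f$ must satisfy $\langle f,f\rangle=1$ and $\langle e,f\rangle=0$.

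First I would establish $\qec(G)\geq -1$. Assuming $n\geq 2$, connectedness gives at least one edge $\{i,j\}$. The function $f=\tfrac{1}{\sqrt{2}}(\delta_i-\delta_j)$ is admissible, and since $(D_G)_{ii}=(D_G)_{jj}=0$ and $(D_G)_{ij}=d(i,j)=1$, a one-line computation gives $\langle f,D_Gf\rangle=-d(i,j)=-1$. As $\qec(G)$ is the maximum over all admissible $f$, this yields $\qec(G)\geq -1$. For the ``if'' direction, note that $G=K_n$ gives $D_{K_n}=J-I$; for every admissible $f$ we have $\langle f,Jf\rangle=|\langle e,f\rangle|^2=0$, hence $\langle f,D_{K_n}f\rangle=-\langle f,f\rangle=-1$, so $\qec(K_n)=-1$.

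The heart of the argument is the ``only if'' direction, which I would prove by contraposition: if $G$ is connected but not complete, I claim $\qec(G)>-1$. The subtlety here is that the two-point test vector is useless for a strict inequality, since for a non-adjacent pair $i,j$ it returns $\langle f,D_Gf\rangle=-d(i,j)\leq -2$, which moves in the wrong direction; one is forced to involve a third vertex. Since $G$ is connected and non-complete, walking two steps along a shortest path between any non-adjacent pair produces vertices $i,j$ with $d(i,j)=2$ together with a common neighbor $k$, so $d(i,k)=d(j,k)=1$. Taking the admissible three-point function $f=\tfrac{1}{\sqrt{6}}(\delta_i+\delta_j-2\delta_k)$, the off-diagonal cross terms evaluate to $\langle f,D_Gf\rangle=\tfrac{1}{6}\bigl(2d(i,j)-4d(i,k)-4d(j,k)\bigr)=\tfrac{1}{6}(4-4-4)=-\tfrac{2}{3}$, whence $\qec(G)\geq -\tfrac{2}{3}>-1$, completing the contrapositive.

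The main obstacle is precisely the construction of this third test vector: the choice of $k$ as a common neighbor of a distance-$2$ pair is exactly what makes the unfavorable cross terms vanish and leaves a strictly positive contribution. A cleaner conceptual framing I might also record is that, having shown $\qec(G)\geq -1$, the equality $\qec(G)=-1$ is equivalent to $\langle f,(D_G+I)f\rangle\leq 0$ for all $f\perp e$, and that $D_G+I-J$ is a symmetric, entrywise nonnegative, zero-diagonal matrix which is nonzero exactly when $G\neq K_n$; the vector above then simply exhibits a direction in $e^{\perp}$ along which this nonnegative matrix is strictly positive, ruling out $\qec(G)=-1$.
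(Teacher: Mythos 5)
The paper states this proposition without proof, citing Lou--Obata--Huang \cite{lou}, so there is no in-paper argument to compare yours against; your proof has to stand on its own, and it does. All three steps check out. The edge vector $\tfrac{1}{\sqrt{2}}(\delta_i-\delta_j)$ is admissible and gives $\langle f,D_Gf\rangle=-1$, proving the lower bound; for $K_n$ the identity $D_{K_n}=J-I$ together with $\langle f,Jf\rangle=\langle e,f\rangle^2=0$ forces $\langle f,D_{K_n}f\rangle=-1$ for every admissible $f$; and in the non-complete case the vertices $i,j,k$ with $d(i,j)=2$ and $d(i,k)=d(j,k)=1$ do exist, since the initial two-edge segment of a shortest path between any non-adjacent pair is itself a shortest path, after which $\tfrac{1}{\sqrt{6}}(\delta_i+\delta_j-2\delta_k)$ indeed yields $-\tfrac{2}{3}>-1$. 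Your diagnosis of why a two-point vector cannot settle the equality case, and the closing reformulation via the entrywise nonnegative, zero-diagonal matrix $D_G+I-J$, are both accurate. Two small remarks. First, for $n=1$ the admissible set is empty, so the statement must be read with $n\geq 2$, as you note; this is consistent with the paper's standing convention that the graphs under consideration have at least two vertices. Second, your three-point computation is precisely the evaluation of $\qec(P_3)=-\tfrac{2}{3}$ on an isometrically embedded copy of $P_3$ inside $G$; combined with the monotonicity of QEC under isometric embedding (\cite[Theorem 3.1]{zakiyyah}, which this paper invokes in the proof of Proposition \ref{qecrelation}), this gives a one-line alternative phrasing of your contrapositive step.
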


The following result provides a handy technique for computing the quadratic embedding constant of a connected graph using the Lagrange multipliers.
\begin{prop}\cite[Section 3]{nbata} \label{qecformula}
	Let $M\in \mathbb{R}^{l\times l}$ ($l\geq 3$) be a symmetric matrix and $\mathcal{S}(M)$ be the set of all stationary points $(f,\lambda ,\mu)\in \mathbb{R}^l\times \mathbb{R} \times \mathbb{R}$ of
	\begin{center}
		$\psi (f,\lambda ,\mu)=\langle f,Mf \rangle -\lambda (\langle f,f \rangle -1)-\mu \langle e,f \rangle ,$
	\end{center}
	or equivalently, $(f,\lambda ,\mu)\in \mathbb{R}^l\times \mathbb{R} \times \mathbb{R}$ satisfies the system of the following three equations
	\begin{equation}\label{equivalentequation}
		(M-\lambda I)f=\frac{\mu}{2}\ e,\ \ \ \  \langle f,f \rangle =1\ \  and\ \  \langle e,f \rangle =0.
	\end{equation}
	Then $\mathcal{S}(M)$ is nonempty. Moreover,
	\begin{itemize}
		\item [(i)] $\max\{\langle f,M f \rangle ;\  f\in \mathbb{R}^l,\  \langle f,f \rangle = 1,\  \langle e,f \rangle = 0 \} = \max \{\lambda : (f,\lambda ,\mu)\in \mathcal{S}(M) \}$.
		\item [(ii)] 	$\min\{\langle f,M f \rangle ;\  f\in \mathbb{R}^l,\  \langle f,f \rangle = 1,\  \langle e,f \rangle = 0 \} = \min \{\lambda : (f,\lambda ,\mu)\in \mathcal{S}(M) \}$.
	\end{itemize}
\end{prop}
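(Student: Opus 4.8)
The plan is to treat this as a constrained optimization problem and apply the method of Lagrange multipliers, after first establishing that the feasible set is nonempty and compact. Write $S:=\{f\in\mathbb{R}^l : \langle f,f\rangle=1,\ \langle e,f\rangle=0\}$ for the feasible set. I would begin by observing that $S$ is the intersection of the unit sphere with the hyperplane $e^\perp$, hence a sphere of dimension $l-2$; since $l\geq 3$ it is a positive-dimensional sphere, in particular nonempty, and being closed and bounded it is compact. The continuous function $f\mapsto\langle f,Mf\rangle$ therefore attains both its maximum and its minimum on $S$, so once I show these extremizers are stationary points, the right-hand sides of (i) and (ii) are maxima/minima over a nonempty set.

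Next I would verify the equivalence of the two descriptions of $\mathcal{S}(M)$ by computing the gradient of $\psi$: setting $\partial_\lambda\psi=0$ and $\partial_\mu\psi=0$ recovers the two constraints, while $\nabla_f\psi=2Mf-2\lambda f-\mu e=0$ is exactly $(M-\lambda I)f=\frac{\mu}{2}e$. To legitimately invoke the Lagrange multiplier theorem at the extremizers, I would check the constraint qualification: the gradients of the two constraint functions at a point $f\in S$ are $2f$ and $e$, and these are linearly dependent only if $f$ is a scalar multiple of $e$; but $\langle e,f\rangle=0$ then forces $f=0$, contradicting $\langle f,f\rangle=1$. Hence the constraint gradients are independent at every point of $S$, the extremizers are genuine stationary points, and in particular $\mathcal{S}(M)$ is nonempty.

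The decisive step is the observation that the value of the objective at a stationary point equals the multiplier $\lambda$. Indeed, pairing $(M-\lambda I)f=\frac{\mu}{2}e$ with $f$ and using $\langle e,f\rangle=0$ together with $\langle f,f\rangle=1$ gives $\langle f,Mf\rangle-\lambda=\frac{\mu}{2}\langle e,f\rangle=0$, so $\langle f,Mf\rangle=\lambda$. With this identity, both equalities follow by a short two-sided estimate. On one side, the global maximum of $\langle f,Mf\rangle$ over $S$ is attained at some stationary point, whose value is the associated $\lambda$, so the left-hand side of (i) is at most $\max\{\lambda:(f,\lambda,\mu)\in\mathcal{S}(M)\}$. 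On the other side, every stationary point is feasible and realizes its own $\lambda$ as an objective value, so that maximum over $\lambda$ is at most the left-hand side; combining the two gives (i), and the identical argument with minima in place of maxima gives (ii). Note that this bypasses any need to classify stationary points as maxima, minima, or saddles.

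I expect the only genuinely delicate point to be the constraint qualification, where one must rule out the degenerate configuration in which the sphere and hyperplane constraints have parallel gradients; the hypothesis $l\geq 3$ (ensuring $S$ is a positive-dimensional sphere rather than a pair of points or empty) underpins a clean application of the Lagrange framework throughout. The remaining ingredients—compactness of $S$, the gradient computation, and the identity $\langle f,Mf\rangle=\lambda$—are routine once the setup is in place.
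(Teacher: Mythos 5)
The paper does not prove this proposition; it is quoted verbatim from Obata's work (the citation to Section 3 of the wheel-graph paper), so there is no in-paper proof to compare against. Your argument is correct and complete, and it is the standard one underlying the cited result: compactness of the $(l-2)$-sphere $S$ guarantees extremizers exist, the linear independence of the constraint gradients $2f$ and $e$ on $S$ justifies the Lagrange multiplier theorem, and the identity $\langle f,Mf\rangle=\lambda$ at stationary points (obtained by pairing $(M-\lambda I)f=\tfrac{\mu}{2}e$ with $f$) yields both equalities via the two-sided estimate. The only cosmetic remark is that $l\geq 3$ is not actually needed for your argument --- everything (nonemptiness, compactness, and the constraint qualification) already holds for $l=2$, where $S$ is a two-point set.
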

To prove Theorems \ref{theorem1} and \ref{theorem2}, we now recall the definition of the join of two disjoint graphs. Note that two graphs $G_1=(V_1, E_1)$ and $G_2=(V_2, E_2)$ is disjoint if $V_1\cap V_2=\emptyset$.
\begin{defn}\label{gjoin}
Let $G_1=(V_1, E_1)$ and $G_2=(V_2, E_2)$ be two disjoint graphs. Then the join of $G_1$ and $G_2$, denoted by $G_1+G_2$, is a graph with vertex set $V=V_1\cup V_2$ and edge set $E=E_1\cup E_2\cup \{(u_i,v_j);\ u_i\in V_1,\  v_j\in V_2\}$.
\end{defn}
\begin{ex}
Example of a join graph $P_4+C_3$:
\begin{figure}[H]
\begin{center}
\begin{tikzpicture}[scale=1]
\draw  (1,0)-- (2,0);
\draw  (2,0)-- (3,0);
\draw  (3,0)-- (4,0);
\begin{scriptsize}
\fill (1,0) circle (2.5pt);
\draw (1.0,-0.4) node {$u_1$};
\fill (2,0) circle (2.5pt);
\draw (2.,-0.4) node {$u_2$};
\fill (3,0) circle (2.5pt);
\draw (3.,-0.4) node {$u_3$};
\fill (4,0) circle (2.5pt);
\draw (4.,-0.4) node {$u_4$};
\end{scriptsize}
\end{tikzpicture}
\hspace{1cm}
\begin{tikzpicture}[scale=1]
\draw  (1,1)-- (2,2);
\draw  (2,2)-- (3,1);
\draw  (3,1)-- (1,1);
\begin{scriptsize}
\fill (2.,2.) circle (2.5pt);
\draw (2.,2.35) node {$v_1$};
\fill (1.,1.) circle (2.5pt);
\draw (1.,0.65) node {$v_2$};
\fill (3.,1.) circle (2.5pt);
\draw (3.,0.65) node {$v_3$};
\end{scriptsize}
\end{tikzpicture}
\hspace{2cm}
\begin{tikzpicture}[scale=1]
\draw  (1,3)-- (1,2);
\draw  (1,2)-- (1,1);
\draw  (1,1)-- (1,0);
\draw  (2.9923874167026288,2.604520628645632)-- (4.4038271330841035,1.5230278589507376);
\draw  (4.4038271330841035,1.5230278589507376)-- (2.9923874167026288,0.4048743174017789);
\draw  (2.9923874167026288,0.4048743174017789)-- (2.9923874167026288,2.604520628645632);
\draw  (1,3)-- (2.9923874167026288,2.604520628645632);
\draw  (1,3)-- (4.4038271330841035,1.5230278589507376);
\draw  (1,3)-- (2.9923874167026288,0.4048743174017789);
\draw  (1,2)-- (2.9923874167026288,2.604520628645632);
\draw  (1,2)-- (4.4038271330841035,1.5230278589507376);
\draw  (1,2)-- (2.9923874167026288,0.4048743174017789);
\draw  (1,1)-- (2.9923874167026288,2.604520628645632);
\draw  (1,1)-- (4.4038271330841035,1.5230278589507376);
\draw  (1,1)-- (2.9923874167026288,0.4048743174017789);
\draw  (1,0)-- (2.9923874167026288,2.604520628645632);
\draw  (1,0)-- (4.4038271330841035,1.5230278589507376);
\draw  (1,0)-- (2.9923874167026288,0.4048743174017789);
\begin{scriptsize}
\fill (1,3) circle (2.5pt);
\draw (0.65,3.) node {$u_1$};
\fill (1,2) circle (2.5pt);
\draw (0.65,2.) node {$u_2$};
\fill (1,1) circle (2.5pt);
\draw (0.65,1.) node {$u_3$};
\fill (1,0) circle (2.5pt);
\draw (0.65,0.) node {$u_4$};
\fill (2.9923874167026288,2.604520628645632) circle (2.5pt);
\draw (3.0,2.9) node {$v_2$};
\fill (2.9923874167026288,0.4048743174017789) circle (2.5pt);
\draw (3.0,0.1) node {$v_3$};
\fill (4.4038271330841035,1.5230278589507376) circle (2.5pt);
\draw (4.7,1.55) node {$v_1$};
\end{scriptsize}
\end{tikzpicture}
\end{center}
\caption{$P_4$, $C_3$ and $P_4+C_3$ (Left to right)}
\label{figure1}
\end{figure}
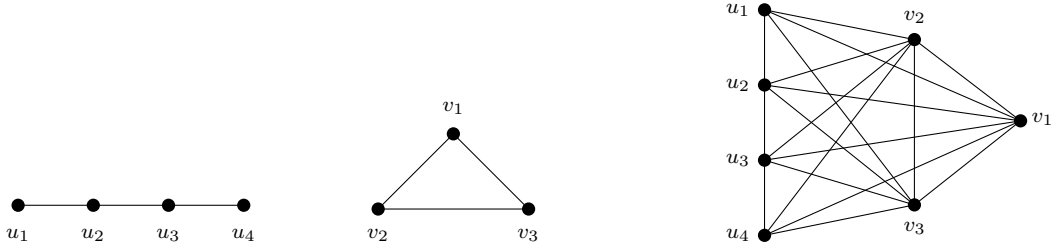
\end{ex}
In 2017, Obata \cite{nbata} derived an interesting formula for the quadratic embedding constant of a graph join using its adjacency matrix.
\begin{prop}\cite[Proposition 2.1.]{nbata}\label{graphjoinqec} Let $G_1=(V_1, E_1)$ and $G_2=(V_2, E_2)$ be two disjoint graphs and let $V=V_1\cup V_2$. Then
\[\qec (G_1+G_2)=-2-\min\{\langle f,A_{G_1+G_2} f \rangle:\  f\in C(V),\  \langle f,f \rangle = 1,\  \langle e,f \rangle = 0 \}.\]
\end{prop}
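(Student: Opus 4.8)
The plan is to exploit the fact that any join $G_1+G_2$ has diameter at most $2$, which reduces its distance matrix to a simple affine expression in its adjacency matrix, and then to substitute this expression into the definition of $\qec$.

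First I would establish the geometric fact about the join. Since every vertex of $V_1$ is adjacent to every vertex of $V_2$ in $G_1+G_2$, any two distinct vertices are either adjacent or joined by a path of length $2$ through a vertex of the opposite part: two non-adjacent vertices lying in the same $V_i$ are connected via any vertex of the other part, while a vertex of $V_1$ and a vertex of $V_2$ are always adjacent. Hence $d(u,v)\in\{1,2\}$ for all distinct $u,v\in V$, so $G_1+G_2$ has diameter at most $2$.

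Next I would convert this into a matrix identity and finish. For any graph $H$ of diameter at most $2$ on vertex set $V$, the distance matrix carries $0$ on the diagonal, $1$ on adjacent off-diagonal pairs, and $2$ on non-adjacent off-diagonal pairs; comparing entrywise with $2(J-I)$, whose off-diagonal entries are all $2$, I would verify that $D_H=2(J-I)-A_H=2J-2I-A_H$. Applying this with $H=G_1+G_2$ and substituting into the definition of $\qec$, for any admissible $f$ (that is, $\langle f,f\rangle=1$ and $\langle e,f\rangle=0$) I would use $Jf=\langle e,f\rangle\,e=0$ to kill the $J$-term and $\langle f,f\rangle=1$ to evaluate the identity term, obtaining
\[
\langle f, D_{G_1+G_2}f\rangle = 2\langle f, Jf\rangle - 2\langle f,f\rangle - \langle f, A_{G_1+G_2}f\rangle = -2 - \langle f, A_{G_1+G_2}f\rangle.
\]
Taking the maximum over admissible $f$ and rewriting the maximum of a negated quantity as a negated minimum then yields the claimed formula.

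The computations here are all routine, so I expect no serious obstacle; the only genuine content is the observation that a join always has diameter at most $2$. The one point requiring care is that both $V_1$ and $V_2$ must be nonempty for the connecting vertex to exist, which is guaranteed by the standing hypotheses $m,n\geq 1$.
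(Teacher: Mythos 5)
Your proof is correct and is essentially the standard argument for this result (the paper cites Obata's original proof, which proceeds exactly this way): the join has diameter at most $2$, hence $D_{G_1+G_2}=2(J-I)-A_{G_1+G_2}$, and the $J$-term vanishes on the hyperplane $\langle e,f\rangle=0$. You also correctly handle the edge cases (the identity still holds when the join is complete, and $m,n\geq 1$ guarantees the connecting vertex exists), so there is nothing to add.
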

Let $G_1=(V_1, E_1)$ and $G_2=(V_2, E_2)$ be two disjoint graphs with $|V_1|=m\geq 1$ and $|V_2|=n\geq 1$. Since a connected graph on two vertices is the complete graph $K_2$, we assume $m+n\geq 3$. Note that a graph join $G_1+G_2$ is a connected graph, and its diameter is either one or two. Thus one can write the adjacency matrix of $G_1+G_2$  as the block matrix $A_{G_1+G_2}=\begin{pmatrix}
	A_{G_1} & J\\
	J & A_{G_2}
\end{pmatrix}$. For the adjacency matrix $A_{G_1+G_2}$, the equation \eqref{equivalentequation} reduces to the following system of equations:
\begin{eqnarray}
	(A_{G_1}-J-\lambda I)x&=&\frac{\mu}{2}\  e, \label{firstequation} \\
	(A_{G_2}-J-\lambda I)y&=&\frac{\mu}{2}\  e, \label{secondequation} \\
	\langle e,x \rangle + \langle e,y \rangle& =&0, \label{thirdequation} \\
    \langle x,x \rangle + \langle y,y \rangle& =&1, \label{fourthequation}
\end{eqnarray}
where $x\in C(V_1) \cong \mathbb{R}^m$ and $y\in C(V_2) \cong \mathbb{R}^n$. Let $\mathbb{S}(A_{G_1+G_2})$ be the set of all solutions $(x,y,\lambda, \mu )\in \mathbb{R}^m \times \mathbb{R}^n \times \mathbb{R} \times \mathbb{R}$ of the system of equations \eqref{firstequation}--\eqref{fourthequation}. Then all the stationary points $\mathcal{S}(A_{G_1+G_2})$ of the function
\begin{equation*}
		\psi (x,y,\lambda ,\mu)=\Big \langle \begin{pmatrix}
x\\
y
\end{pmatrix},A_{G_1+G_2}\begin{pmatrix}
x\\
y
\end{pmatrix} \Big \rangle -\lambda (\langle x,x \rangle +\langle y,y \rangle-1)-\mu (\langle e,x \rangle + \langle e,y \rangle )
\end{equation*}
coincides with the points in $\mathbb{S}(A_{G_1+G_2})$. By Propositions \ref{graphjoinqec} and \ref{qecformula}, one can conclude that
\begin{equation}\label{graphjoinqec2}
\qec (G_1+G_2)=-2-\min \{\lambda : (x,y,\lambda, \mu )\in \mathbb{S}(A_{G_1+G_2})\}.
\end{equation}
Define $\Lambda (G_1+G_2) :=\{\lambda : (x,y,\lambda, \mu )\in \mathbb{S}(A_{G_1+G_2})\}$. Then, we can rewrite \eqref{graphjoinqec2} as
\begin{equation}\label{newgraphjoinqec}
\qec (G_1+G_2)=-2-\min \Lambda (G_1+G_2).
\end{equation}

With these basic ingredients, we now prove Theorem \ref{theorem1}. 
\begin{proof}[Proof of Theorem \ref{theorem1}]
	
 Note that a graph join $G_1+G_2$ is complete if and only if $G_1$ and $G_2$ both are complete graphs. 
 Since $G_1+G_2$ is not complete, by Proposition \ref{qeccomplete}, $\qec(G_1+G_2)>-1$ and using \eqref{graphjoinqec2}, it is sufficient to consider all $\lambda <-1$ such that $(x,y,\lambda, \mu )\in \mathbb{S}(A_{G_1+G_2})$. By \eqref{newgraphjoinqec}, $\qec(G_1+G_2)=-2-\min \left(\Lambda (G_1+G_2)\cap (-\infty , -1)\right).$ Thus, to complete the proof,  it suffices to show that
 \[\Lambda (G_1+G_2)\cap (-\infty , -1)=\bigcup\limits_{i=1}^4(\Lambda_i (G_1+G_2)\cap (-\infty , -1)).\] 
 We now split the proof into several lemmas.

\begin{lem}\label{lemma1}
Let $\lambda = r-m$. Then $\lambda$ appears in the solution of  \eqref{firstequation}-\eqref{fourthequation} if and only if $\lambda \in \sigma(A_{G_1})\cup \sigma(A_{G_2}-J)$.
\end{lem}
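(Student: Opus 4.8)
The plan is to analyze the system \eqref{firstequation}--\eqref{fourthequation} at the single value $\lambda=r-m$, exploiting the regularity of $G_1$. The point of departure is that, since $G_1$ is $r$-regular, $e$ is an eigenvector of $A_{G_1}$ with eigenvalue $r$ and of $J$ with eigenvalue $m$, so $(A_{G_1}-J)e=(r-m)e$. Thus $r-m$ is always an eigenvalue of $A_{G_1}-J$ with eigenvector $e$, and this is precisely what singles out this value of $\lambda$. Throughout I write $B_1:=A_{G_1}-J$ and $B_2:=A_{G_2}-J$, both symmetric, so that \eqref{firstequation} and \eqref{secondequation} read $(B_1-\lambda I)x=\tfrac{\mu}{2}e$ and $(B_2-\lambda I)y=\tfrac{\mu}{2}e$.

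First I would show that at $\lambda=r-m$ the multiplier $\mu$ is forced to vanish. Since $B_1$ is symmetric and $(B_1-(r-m)I)e=0$, every vector in the range of $B_1-(r-m)I$ is orthogonal to $e$; pairing \eqref{firstequation} with $e$ therefore gives $\tfrac{\mu}{2}\langle e,e\rangle=\tfrac{\mu}{2}m=0$, hence $\mu=0$. This reduces \eqref{firstequation}--\eqref{secondequation} to the homogeneous conditions $x\in W_1:=\ker(B_1-(r-m)I)$ and $y\in W_2:=\ker(B_2-(r-m)I)$, subject to $\langle e,x\rangle+\langle e,y\rangle=0$ and $(x,y)\neq 0$ (the normalization \eqref{fourthequation} only demanding nontriviality, since any nonzero pair can be scaled).

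The next step is to read off $W_1$ and $W_2$ spectrally. Because $A_{G_1}$ preserves $e^{\perp}$ and $J$ annihilates $e^{\perp}$, the operator $B_1$ acts as $A_{G_1}$ on $e^{\perp}$ and as multiplication by $r-m$ on $\mathrm{span}(e)$; decomposing $v=ce+w$ with $w\in e^{\perp}$ shows $W_1=\mathrm{span}(e)\oplus\{w\in e^{\perp}:A_{G_1}w=(r-m)w\}$. Since $m\geq 1$ forces $r-m<r$, any eigenvector of $A_{G_1}$ for the eigenvalue $r-m$ is automatically orthogonal to $e$, so the second summand is nonzero exactly when $r-m\in\sigma(A_{G_1})$; equivalently $\dim W_1\geq 2\iff r-m\in\sigma(A_{G_1})$, while $W_1=\mathrm{span}(e)$ otherwise. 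For $G_2$ there is no regularity to exploit, so I simply record $W_2\neq 0\iff r-m\in\sigma(A_{G_2}-J)$.

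Finally I would settle both implications by deciding when a nonzero admissible pair $(x,y)\in W_1\times W_2$ with $\langle e,x\rangle+\langle e,y\rangle=0$ exists. If $r-m\in\sigma(A_{G_1})$, then $W_1$ contains some $0\neq w\in e^{\perp}$, and $(x,y)=(w,0)$ satisfies \eqref{thirdequation}; if $r-m\in\sigma(A_{G_2}-J)$, pick $0\neq y\in W_2$ and set $x=-\tfrac{\langle e,y\rangle}{m}e\in W_1$, which also satisfies \eqref{thirdequation}. Conversely, if $r-m$ lies in neither spectrum then $W_1=\mathrm{span}(e)$ and $W_2=0$, so $x=ce$, $y=0$, and \eqref{thirdequation} yields $cm=0$, forcing $(x,y)=0$; hence no solution with $\lambda=r-m$ exists. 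I expect the main obstacle to be the bookkeeping in this last case analysis—keeping the constraint \eqref{thirdequation} from collapsing the only candidate solution while correctly matching the two alternatives to the two spectra—together with the subtle observation that $r-m\neq r$ is exactly what turns the $A_{G_1}-J$ eigenspace into a statement about $\sigma(A_{G_1})$ rather than about $\sigma(A_{G_1}-J)$.
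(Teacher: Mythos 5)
Your proposal is correct and follows essentially the same route as the paper: both arguments use the $r$-regularity of $G_1$ (via symmetry of $A_{G_1}-J-(r-m)I$ and $e$ being in its kernel) to force $\mu=0$, reduce the system to kernel conditions for $A_{G_1}-J$ and $A_{G_2}-J$ at $r-m$, and then produce the same two explicit solutions in the converse direction ($(x,y)=(w,0)$ with $w\perp e$ an eigenvector of $A_{G_1}$, and $x$ a multiple of $e$ paired with an eigenvector of $A_{G_2}-J$). Your explicit decomposition $W_1=\mathrm{span}(e)\oplus\{w\in e^{\perp}:A_{G_1}w=(r-m)w\}$ and the contrapositive phrasing of the forward implication are only presentational variants of the paper's case split on $\langle e,x\rangle$.
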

\begin{proof}
Suppose $(x,y,r-m,\mu)$ is a solution of \eqref{firstequation}-\eqref{fourthequation}. Since $G_1$ is a $r$-regular graph, $A_{G_1}e=re$ and the equation \eqref{firstequation} implies $\mu =0$. From equations \eqref{firstequation} and \eqref{secondequation} we have
\begin{eqnarray}
	\left(A_{G_1}-(r-m) I\right)x&=&\langle e,x \rangle e, \nonumber\\ 
	(A_{G_2}-J-(r-m) I)y&=&0. \nonumber 
\end{eqnarray}
Notice that, by \eqref{fourthequation}, either $x$ or $y$ is nonzero. Thus, if $\langle e,x \rangle =0$, then from the above equations, one can conclude that $r-m \in \sigma(A_{G_1})\cup \sigma(A_{G_2}-J)$. Let $\langle e,x \rangle \neq 0$. Then $\langle e,y \rangle \neq 0$ from \eqref{thirdequation}. Thus $y\neq 0$ and $r-m\in \sigma(A_{G_2}-J)$.

To prove the converse, suppose $r-m \in \sigma(A_{G_1})\cup \sigma(A_{G_2}-J)$. We now consider two cases. 

\noindent {\bf Case I.} $r-m \in \sigma(A_{G_2}-J)$: Then there exists $0\neq y_0\in \mathbb{R}^n$ such that
\begin{equation*}
(A_{G_2}-J+(m-r) I)y_0=0.
\end{equation*}
Since $G_1$ is a $r$-regular graph on $m$ vertices, $r-m$ is an eigenvalue of $A_{G_1}-J$ and the corresponding eigenvector is $\alpha e$ for any $\alpha \in \mathbb{R}$. Thus $(x=\alpha e,y=\beta y_0,\lambda = r-m, \mu =0)$ is a solution of \eqref{firstequation}-\eqref{fourthequation}, where  $\alpha,\beta \in \mathbb{R}$ are chosen appropriately from equations \eqref{thirdequation}-\eqref{fourthequation} in terms of $\langle e,y_0 \rangle $ and $\langle y_0,y_0 \rangle $.

\noindent {\bf Case II.} $r-m \in \sigma(A_{G_1})$: Then there exists $0\neq x_0\in \mathbb{R}^m$ such that
\begin{equation*}
(A_{G_1}+(m-r) I)x_0=0.
\end{equation*}
Since $r\neq r-m$ and $e$ is an eigenvector of $A_{G_1}$ corresponding to the eigenvalue $r$, $\langle e,x_0 \rangle =0$. Thus, $(x=\frac{x_0}{\parallel x_0 \parallel _2},y=0,\lambda = r-m, \mu =0)$ satisfies equations \eqref{firstequation}-\eqref{fourthequation}.
\end{proof}
We next consider $\lambda \neq r-m$. To proceed further, we rewrite the equations \eqref{firstequation} and \eqref{secondequation} using relation \eqref{thirdequation}. By a straightforward calculation, we have
\begin{equation}\label{innerproduct}
\langle e,x \rangle = -\langle e,y \rangle = \frac{m}{r-m-\lambda} \frac{\mu}{2},
\end{equation}
\begin{equation}\label{firstequation4}
(A_{G_1}-\lambda I)x=Jx+\frac{\mu}{2}e=\langle e,x \rangle e+\frac{\mu}{2}e=\frac{m}{r-m-\lambda}\frac{\mu}{2}e+\frac{\mu}{2}e=\frac{r-\lambda}{r-m-\lambda}\frac{\mu}{2}e,
\end{equation}
and
\begin{equation}\label{secondequation3}
(A_{G_2}-\lambda I)y=Jy+\frac{\mu}{2}e=\langle e,y \rangle e+\frac{\mu}{2}e=-\frac{m}{r-m-\lambda}\frac{\mu}{2}e+\frac{\mu}{2}e=\frac{r-2m-\lambda}{r-m-\lambda}\frac{\mu}{2}e.
\end{equation}
\begin{lem}\label{lemma2}
Let $\lambda = r-2m$. Then $\lambda$ appears in the solution of  \eqref{firstequation}-\eqref{fourthequation} if and only if $\lambda \in \sigma(A_{G_2})$.
\end{lem}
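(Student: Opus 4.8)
The plan is to specialize the reduced system \eqref{innerproduct}--\eqref{secondequation3} at $\lambda=r-2m$, which is legitimate since $r-2m\neq r-m$ (because $m\geq 1$). Substituting $\lambda=r-2m$ makes the scalar factor $\frac{r-2m-\lambda}{r-m-\lambda}$ on the right-hand side of \eqref{secondequation3} vanish, so \eqref{secondequation3} collapses to $(A_{G_2}-(r-2m)I)y=0$; this is exactly the bridge to $\sigma(A_{G_2})$. Correspondingly \eqref{firstequation4} becomes $(A_{G_1}-(r-2m)I)x=\mu e$, and \eqref{innerproduct} reads $\langle e,x\rangle=\frac{\mu}{2}=-\langle e,y\rangle$. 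These three specialized identities are the workhorses for both directions.

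For the forward implication, suppose $(x,y,r-2m,\mu)$ solves \eqref{firstequation}--\eqref{fourthequation}. From $(A_{G_2}-(r-2m)I)y=0$, if $y\neq 0$ then immediately $r-2m\in\sigma(A_{G_2})$, so the only thing to rule out is $y=0$. Here I would exploit the spectral location of $r-2m$ relative to $A_{G_1}$: since $G_1$ is $r$-regular on $m$ vertices we have $r\leq m-1$, hence $r-2m\leq -r-2<-r$, whereas $\sigma(A_{G_1})\subseteq[-r,r]$. Thus $r-2m\notin\sigma(A_{G_1})$. Now if $y=0$, then $\langle e,y\rangle=0$ forces $\mu=0$ via \eqref{innerproduct}, so $\langle e,x\rangle=0$ and \eqref{firstequation4} reduces to $(A_{G_1}-(r-2m)I)x=0$; but \eqref{fourthequation} forces $x\neq 0$, making $r-2m$ an eigenvalue of $A_{G_1}$, a contradiction. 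Hence $y\neq 0$ and $r-2m\in\sigma(A_{G_2})$.

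For the converse, assume $r-2m\in\sigma(A_{G_2})$ and fix a nonzero eigenvector $y_0$. Since $r-2m\notin\sigma(A_{G_1})$, the matrix $A_{G_1}-(r-2m)I$ is invertible, and because $A_{G_1}e=re$ we get $(A_{G_1}-(r-2m)I)e=2m\,e$, so $(A_{G_1}-(r-2m)I)^{-1}e=\frac{1}{2m}e$; thus any $x$ satisfying \eqref{firstequation4} must equal $x=\frac{\mu}{2m}e$. I would then construct an explicit solution in two cases. If $\langle e,y_0\rangle=0$, take $\mu=0$, $x=0$, $y=y_0/\parallel y_0\parallel_2$, which satisfies \eqref{firstequation}--\eqref{fourthequation} directly. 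If $\langle e,y_0\rangle\neq 0$, set $x=\frac{\mu}{2m}e$ and $y=\beta y_0$, impose \eqref{thirdequation} to obtain $\beta=-\mu/(2\langle e,y_0\rangle)$, and then \eqref{fourthequation} becomes the positive-definite quadratic $\frac{\mu^2}{4}\big(\tfrac{1}{m}+\parallel y_0\parallel_2^2/\langle e,y_0\rangle^2\big)=1$ in $\mu$, which always has a nonzero solution. A short direct substitution confirms that this $(x,y,r-2m,\mu)$ satisfies both \eqref{firstequation} and \eqref{secondequation}.

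The only genuine obstacle is the $y=0$ branch of the forward direction: the stated equivalence would be false if $r-2m$ could enter through an eigenvector of $A_{G_1}$ instead of $A_{G_2}$. The decisive point is therefore the elementary but essential spectral estimate $r-2m<-r\leq\min\sigma(A_{G_1})$ furnished by the $r$-regularity of $G_1$, which both pins the eigenvalue $r-2m$ to $A_{G_2}$ alone and supplies the invertibility of $A_{G_1}-(r-2m)I$ underlying the converse. Everything else is routine verification using the already-derived identities \eqref{innerproduct}--\eqref{secondequation3}.
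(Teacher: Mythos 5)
Your proposal is correct and follows essentially the same route as the paper: specialize \eqref{innerproduct}--\eqref{secondequation3} at $\lambda=r-2m$, rule out $y=0$ in the forward direction via the spectral bound $\sigma(A_{G_1})\subseteq[-r,r]$ (the paper cites Gershgorin; you derive $r\leq m-1$, the same fact), and in the converse take $x$ proportional to $e$ and $y$ a multiple of the eigenvector, adjusting two scalars to meet \eqref{thirdequation}--\eqref{fourthequation}. Your explicit two-case split on $\langle e,y_0\rangle$ is just an unpacking of the paper's single system for $c$ and $\mu$.
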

\begin{proof}
Suppose $(x,y,r-2m,\mu) \in \mathbb{R}^m \times \mathbb{R}^n \times \mathbb{R} \times \mathbb{R}$ is a solutions of \eqref{firstequation}-\eqref{fourthequation}. Then $(x,y,r-2m, \mu)$ satisfies \eqref{innerproduct}-\eqref{secondequation3}. From \eqref{secondequation3}, we get
\[
\left(A_{G_2}+(2m-r) I\right)y=0.
\]
We next show that $y\neq 0$. By Gershgorin's theorem, $r$ is the eigenvalue of $A_{G_1}$ with the largest absolute value. Since $(2m-r)>r$, $\{A_{G_1}+(2m-r)I\}$ is an invertible matrix. If $y=0$, then $\mu =0$ by \eqref{innerproduct} and $x=0$ by \eqref{firstequation4}, which contradicts $\langle x,x \rangle + \langle y,y \rangle=1$. Thus $y \neq 0$ and $r-2m \in \sigma(A_{G_2})$.

Conversely, suppose that $r-2m \in \sigma(A_{G_2})$. Then there exists $0\neq y_0\in \mathbb{R}^n$ such that
\begin{equation*}
(A_{G_2}+(2m-r) I)y_0=0.
\end{equation*}
Let $x=\frac{1}{m}\frac{\mu}{2}e$ and $y=cy_0$, where $\mu, c\in \mathbb{R}$. Then $(x,y,r-2m,\mu)$ satisfies \eqref{firstequation4} and \eqref{secondequation3} for any $\mu, c\in \mathbb{R}$. Now, choose $c$ and $\mu$ such that
\begin{eqnarray*}
	\frac{\mu}{2}+c\langle e,y_0 \rangle&=&0, \\
    \frac{1}{m}\Big (\frac{\mu}{2} \Big )^2 +c^2\langle y_0,y_0 \rangle & =&1.
\end{eqnarray*}
Then, $(x=\frac{1}{m}\frac{\mu}{2}e,y=cy_0,\lambda =r-2m,\mu)$ is a solution of equations \eqref{firstequation}-\eqref{fourthequation}.
\end{proof}

\begin{lem}\label{lemma4}
	Let $\lambda \in (\sigma(A_{G_1})\cup \sigma(A_{G_2}))\backslash \{r-m,r-2m\}$. Then $\lambda$ appears  in the solution of  \eqref{firstequation}-\eqref{fourthequation} if and only if $\lambda \in \sigma_0(A_{G_1})\cup \sigma_0(A_{G_2})$.
\end{lem}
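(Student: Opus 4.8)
The plan is to prove both implications directly from the stationary-point system, using the reduced equations \eqref{innerproduct}, \eqref{firstequation4} and \eqref{secondequation3}, which are available here because $\lambda\neq r-m$. The converse is a short explicit construction, while the forward implication carries the content and hinges on a Fredholm-alternative argument for symmetric matrices.

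For the converse, suppose $\lambda\in\sigma_0(A_{G_1})$ and pick a unit eigenvector $x_0$ of $A_{G_1}$ for $\lambda$ with $\langle e,x_0\rangle=0$. I would set $(x,y,\mu)=(x_0,0,0)$. Since $Jx_0=\langle e,x_0\rangle e=0$, equation \eqref{firstequation} collapses to $(A_{G_1}-\lambda I)x_0=0$, which holds, and \eqref{secondequation} holds trivially; \eqref{thirdequation} and \eqref{fourthequation} hold because $\langle e,x_0\rangle=0$ and $\|x_0\|=1$. The case $\lambda\in\sigma_0(A_{G_2})$ is symmetric, taking $(x,y,\mu)=(0,y_0,0)$ with $y_0$ a unit $\lambda$-eigenvector of $A_{G_2}$ orthogonal to $e$; the only point to verify is again that $Jy_0=0$ removes the off-diagonal coupling in \eqref{secondequation}.

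For the forward implication, let $(x,y,\lambda,\mu)$ solve \eqref{firstequation}--\eqref{fourthequation} with $\lambda\in(\sigma(A_{G_1})\cup\sigma(A_{G_2}))\setminus\{r-m,r-2m\}$, and I would split on $\mu$. If $\mu=0$, then \eqref{innerproduct} gives $\langle e,x\rangle=\langle e,y\rangle=0$, while \eqref{firstequation4} and \eqref{secondequation3} reduce to $(A_{G_1}-\lambda I)x=0$ and $(A_{G_2}-\lambda I)y=0$; since \eqref{fourthequation} forbids $x=y=0$, one of these supplies an eigenvector orthogonal to $e$, so $\lambda\in\sigma_0(A_{G_1})\cup\sigma_0(A_{G_2})$. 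If $\mu\neq0$, the key observation is that in \eqref{secondequation3} the scalar $\frac{r-2m-\lambda}{r-m-\lambda}\frac{\mu}{2}$ is nonzero (this uses both $\lambda\neq r-2m$ and $\lambda\neq r-m$), so the right-hand side is a nonzero multiple of $e$. When $\lambda\in\sigma(A_{G_2})$, solvability of \eqref{secondequation3} forces $e$ into $\operatorname{range}(A_{G_2}-\lambda I)=\ker(A_{G_2}-\lambda I)^{\perp}$, hence $e\perp\ker(A_{G_2}-\lambda I)$ and $\lambda\in\sigma_0(A_{G_2})$. Otherwise $\lambda\in\sigma(A_{G_1})$ and $\lambda\notin\sigma(A_{G_2})$, and provided $\lambda\neq r$ the analogous nonzero scalar appears in \eqref{firstequation4}, so the same range argument on $A_{G_1}$ yields $\lambda\in\sigma_0(A_{G_1})$.

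The hard part will be the single degenerate sub-case $\mu\neq0$, $\lambda=r$, $\lambda\notin\sigma(A_{G_2})$: here the coefficient of $e$ in \eqref{firstequation4} vanishes, so the Fredholm argument returns no information and one must inspect the $r$-eigenspace of $A_{G_1}$ directly. I expect to dispose of it by noting that $r\ge 0>-1$, so this value of $\lambda$ falls outside the window $(-\infty,-1)$ in which the lemma is invoked in the proof of Theorem \ref{theorem1}; alternatively, when $G_1$ is connected the $r$-eigenspace equals $\mathrm{span}(e)$ and a direct substitution settles it. Throughout, the recurring engine is the Fredholm alternative for symmetric matrices—the identity $\operatorname{range}(M)=\ker(M)^{\perp}$—which converts ``the system with right-hand side proportional to $e$ is solvable'' into ``$e$ is orthogonal to the $\lambda$-eigenspace,'' i.e.\ exactly membership in $\sigma_0$.
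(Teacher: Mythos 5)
Your proof is correct and follows essentially the same route as the paper: the same explicit construction $(x_0/\|x_0\|,0,\lambda,0)$ for the converse, the same split on $\mu$, and your Fredholm-alternative step $\operatorname{range}(M)=\ker(M)^{\perp}$ is precisely the paper's rank argument on the augmented matrix $(A_{G_2}-\lambda I,\ e)$. The degenerate sub-case $\lambda=r$ is dismissed in the paper exactly as you anticipate, via the standing restriction $\lambda<-1$ coming from Proposition \ref{qeccomplete}.
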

\begin{proof}
	Suppose $\lambda \in \sigma_0(A_{G_1})\cup \sigma_0(A_{G_2})$. If $\lambda \in \sigma_0(A_{G_1})$, then there exists $x_0\neq 0$ such that
	\begin{center}
		$A_{G_1}x_0=\lambda x_0$ and $\langle e,x_0\rangle =0$.
	\end{center}
	Then $(x=\frac{x_0}{\parallel x_0\parallel_2},y=0, \lambda , \mu =0)$ is a solution of \eqref{firstequation}-\eqref{fourthequation}. Similarly, if $\lambda \in \sigma_0(A_{G_2})$, there exists $y_0\neq 0$ such that $(x=0,y=\frac{y_0}{\parallel y_0\parallel_2}, \lambda , \mu =0)$ satisfies \eqref{firstequation}-\eqref{fourthequation}.
	
	To prove the converse, let $(x,y,\lambda, \mu)\in \mathbb{R}^m \times \mathbb{R}^n \times \mathbb{R} \times \mathbb{R}$ be a solution of the system of equations \eqref{firstequation}-\eqref{fourthequation}.  Then either $x$ or $y$ is non-zero. 
	 We now split the remaining part of the proof into two cases.
	
	\noindent {\bf Case I.} $\mu =0$:  By equations \eqref{innerproduct}--\eqref{secondequation3}), we have $\langle e,x \rangle = \langle e,y \rangle=0$ and
\[(A_{G_1}-\lambda I)x=0, ~~(A_{G_2}-\lambda I)y=0\]
	Since both $x$ and $y$ can not be zero at the same time,  $\lambda \in \sigma_0(A_{G_1})\cup \sigma_0(A_{G_2})$. 
	
	\noindent {\bf Case II.} $\mu \neq 0$: Let $\lambda \in \sigma (A_{G_2})$. Since $\lambda\neq r-m, r-2m$, by \eqref{secondequation3}, $y \neq 0$ and it is a solution of  the equation \eqref{secondequation3}.
	This implies $\rk(A_{G_2}-\lambda I,~e)= \rk(A_{G_2}-\lambda I)$. Thus $\rk(A_{G_2}-\lambda I,~e)<n$,  since $A_{G_2} \in \mathbb{R}^{n \times n}$ and $\lambda \in \sigma (A_{G_2})$. Hence there exists a $z \in \mathbb{R}^n$ with $\langle z, z\rangle=1$ such that $z^T(A_{G_2}-\lambda I,~e)=0$.  Since $(A_{G_2}-\lambda I)$ is a symmetric matrix,
	\begin{equation*}
		(A_{G_2}-\lambda I)z=0\ \ \  \mbox{and}\ \ \  \langle e,z\rangle =0.
	\end{equation*}
	Thus $\lambda \in \sigma_0(A_{G_2})$.  
	
	Since $\lambda<-1$, a similar argument shows that if $\lambda \in \sigma (A_{G_1})$, then $\lambda \in \sigma_0(A_{G_1})$. This concludes the proof.
\end{proof}

\begin{lem}\label{lemma3}
Let $\lambda \in \mathbb{R}\backslash ( \{r-m,r-2m\}\cup \sigma(A_{G_1})\cup \sigma(A_{G_2}))$. Then $\lambda$ appears in the solution of  \eqref{firstequation}-\eqref{fourthequation} if and only if
\begin{equation}\label{r3equation}
m-(\lambda +2m-r)\langle e,(A_{G_2}-\lambda I)^{-1}e \rangle=0.
\end{equation}
\end{lem}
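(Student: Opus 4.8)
The plan is to exploit the standing hypothesis $\lambda\notin\{r-m,r-2m\}\cup\sigma(A_{G_1})\cup\sigma(A_{G_2})$: here both $A_{G_1}-\lambda I$ and $A_{G_2}-\lambda I$ are invertible and $r-m-\lambda\neq 0$, so once $\mu$ is fixed the vectors $x$ and $y$ are completely forced by the reduced equations \eqref{firstequation4} and \eqref{secondequation3}. Consequently the only genuine constraint left is the compatibility condition \eqref{thirdequation}, and I will show it is equivalent to the scalar equation \eqref{r3equation}, independently of $\mu$.

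For the forward implication, suppose $(x,y,\lambda,\mu)$ solves \eqref{firstequation}--\eqref{fourthequation}. Since $\lambda\neq r-m$, relations \eqref{innerproduct}--\eqref{secondequation3} hold. First I would check $\mu\neq 0$: were $\mu=0$, then \eqref{firstequation4} and \eqref{secondequation3} would give $(A_{G_1}-\lambda I)x=0$ and $(A_{G_2}-\lambda I)y=0$, hence $x=y=0$ by invertibility, contradicting \eqref{fourthequation}. Inverting \eqref{secondequation3} yields $y=\frac{r-2m-\lambda}{r-m-\lambda}\frac{\mu}{2}(A_{G_2}-\lambda I)^{-1}e$; pairing with $e$ and comparing the resulting value of $\langle e,y\rangle$ against the value $-\frac{m}{r-m-\lambda}\frac{\mu}{2}$ supplied by \eqref{innerproduct}, I then cancel the nonzero factor $\frac{\mu}{2(r-m-\lambda)}$ and rewrite $r-2m-\lambda=-(\lambda+2m-r)$ to arrive at exactly \eqref{r3equation}.

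For the converse, assume \eqref{r3equation} and fix any $\mu\neq 0$. Regularity gives $A_{G_1}e=re$, so $(A_{G_1}-\lambda I)^{-1}e=\frac{1}{r-\lambda}e$ and the solution of \eqref{firstequation4} collapses to the multiple of $e$ given by $x:=\frac{1}{r-m-\lambda}\frac{\mu}{2}e$; I set $y:=\frac{r-2m-\lambda}{r-m-\lambda}\frac{\mu}{2}(A_{G_2}-\lambda I)^{-1}e$. A direct substitution, using $Jx=\langle e,x\rangle e$ and $Jy=\langle e,y\rangle e$, recovers the original join equations \eqref{firstequation} and \eqref{secondequation}. The heart of the matter is \eqref{thirdequation}: since $\langle e,x\rangle=\frac{m}{r-m-\lambda}\frac{\mu}{2}$ and $\langle e,y\rangle=\frac{r-2m-\lambda}{r-m-\lambda}\frac{\mu}{2}\langle e,(A_{G_2}-\lambda I)^{-1}e\rangle$, their sum equals $\frac{\mu}{2(r-m-\lambda)}\bigl(m-(\lambda+2m-r)\langle e,(A_{G_2}-\lambda I)^{-1}e\rangle\bigr)$, which vanishes precisely by \eqref{r3equation}. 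Finally, as $x$ is a nonzero multiple of $e$, the quantity $\langle x,x\rangle+\langle y,y\rangle$ is a positive multiple of $\mu^2$, so rescaling $\mu$ secures the normalization \eqref{fourthequation} and produces a bona fide solution.

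The algebraic simplifications are routine; the points demanding care are the two uses of invertibility together with $\mu\neq 0$. The main obstacle is the forward step's verification that $\mu\neq 0$, which legitimizes the cancellation producing \eqref{r3equation}, and the converse's check that the point built from the \emph{reduced} equations \eqref{firstequation4}, \eqref{secondequation3} genuinely satisfies the \emph{original} equations \eqref{firstequation}, \eqref{secondequation} — both hinge on the identity $1+\frac{m}{r-m-\lambda}=\frac{r-\lambda}{r-m-\lambda}$ and its $G_2$-analogue $1-\frac{m}{r-m-\lambda}=\frac{r-2m-\lambda}{r-m-\lambda}$.
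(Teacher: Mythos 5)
Your proposal is correct and follows essentially the same route as the paper: solve for $x$ and $y$ in terms of $\mu$ via the reduced equations \eqref{firstequation4}--\eqref{secondequation3} (using the invertibility granted by the hypothesis on $\lambda$), observe that $\mu\neq 0$ since otherwise $x=y=0$ contradicts \eqref{fourthequation}, and reduce the remaining constraint \eqref{thirdequation} to the scalar condition \eqref{r3equation}. The paper's converse is terser (it simply exhibits the same $x,y$ and chooses $\mu$ by normalization), but the content is identical.
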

\begin{proof}
Suppose $\lambda$ appears in the solutions of \eqref{firstequation}-\eqref{fourthequation}. Then there exist $x\in \mathbb{R}^{m}, y\in \mathbb{R}^{n}$ and $\mu \in \mathbb{R}$ such that $(x,y,\lambda , \mu)$ satisfies \eqref{innerproduct}-\eqref{secondequation3} and \eqref{fourthequation}. Since $\lambda \notin \sigma(A_{G_2})$, $y=\frac{r-2m-\lambda}{r-m-\lambda} \frac{\mu}{2}(A_{G_2}-\lambda I)^{-1}e$ by equation \eqref{secondequation3}.  Thus, from \eqref{thirdequation} and \eqref{innerproduct}  we have
\begin{eqnarray*}
    \frac{m}{r-m-\lambda} \frac{\mu}{2} +\Big \langle e,\frac{r-2m-\lambda}{r-m-\lambda} \frac{\mu}{2}(A_{G_2}-\lambda I)^{-1}e \Big \rangle & =&0\\
    \Rightarrow \frac{\mu}{2}\Big (m - (\lambda +2m-r)\langle e,(A_{G_2}-\lambda I)^{-1}e \rangle \Big )& =&0
\end{eqnarray*}
If $\mu=0$, then $x=y=0$ by \eqref{firstequation4} and \eqref{secondequation3} which contradicts \eqref{fourthequation}. Thus \[m-(\lambda +2m-r)\langle e,(A_{G_2}-\lambda I)^{-1}e \rangle=0.\]

Conversely, suppose $\lambda$ satisfies \eqref{r3equation}. Take
\begin{equation*}
x=\frac{\mu}{2}\frac{1}{r-m-\lambda}e\ \  \mbox{and}\ \  y=\frac{\mu}{2}\frac{r-2m-\lambda}{r-m-\lambda}(A_{G_2}-\lambda I)^{-1}e
\end{equation*}
such that $\mu \in \mathbb{R}$ satisfies the following equation
\begin{equation*}
\Big (\frac{\mu}{2}\Big )^2\left( \Big (\frac{1}{r-m-\lambda} \Big )^2m+\Big (\frac{r-2m-\lambda}{r-m-\lambda} \Big )^2\parallel (A_{G_2}-\lambda I)^{-1}e \parallel_2^2 \right)=1.
\end{equation*}
Then $(x,y,\lambda,\mu)$ fulfills the equations \eqref{firstequation}-\eqref{fourthequation}).
\end{proof}
 Using the above four lemmas, one can verify that
 \[\Lambda(G_1+G_2)\cap (-\infty , -1)=\bigcup\limits _{i=1}^4\big (\Lambda_i (G_1+G_2)\cap (-\infty , -1)\big ).\]This completes the proof.
 \end{proof}
Using our first main result, we recover the QEC of several classes of graphs known in the literature. The first one is derived in  \cite[Theorem 3.1]{lou}
\begin{ex}
Suppose $G_1$ is $r$-regular and $G_2$ is a $\tilde{r}$-regular disjoint graphs on $m\geq 1$ and $n\geq 1$ vertices respectively. If $r-m\neq \tilde{r}-n$ and $r-m\in \sigma (A_{G_2}-J)$, then using the fact $\tilde{r}-n\in \sigma (A_{G_2}-J)$, we can conclude that $r-m\in \sigma (A_{G_2})$. Also, note that $\sigma_0(A_{G_1})=\sigma(A_{G_1})\backslash \{r\}$ and $\sigma_0(A_{G_2})=\sigma(A_{G_2})\backslash \{\tilde{r}\}$. Since $A_{G_2}e=\tilde{r}e$, $(A_{G_2}-\lambda I)^{-1}e=(\tilde{r}-\lambda )^{-1}e$ and the equation
\[
    m-(\lambda +2m-r)\langle e,(A_{G_2}-\lambda I)^{-1}e \rangle=0
\]
has a unique solution $\lambda =\frac{m\tilde{r}+nr-2mn}{m+n}<-1$. Note that $\lambda =r-m$ if and only if  $r-m=\tilde{r}-n$. Thus
\[
	\Lambda_1 (G_1+G_2) \begin{cases}
		=\{r-m\}, & \mbox{if }r-m= \tilde{r}-n, \\
		\subseteq (\sigma (A_{G_1})\cup \sigma (A_{G_2})), & \mbox{otherwise }.
	\end{cases}
\]
\[
\Lambda_2 (G_1+G_2) \subseteq \sigma (A_{G_2}),\quad \Lambda_3 (G_1+G_2) = \Big ((\sigma(A_{G_1})\backslash \{r\})\cup (\sigma(A_{G_2})\backslash \{\tilde{r}\})\Big )\backslash \{r-m,r-2m\},\]
and \[
	\Lambda_4 (G_1+G_2) =\begin{cases}
		\{\frac{m\tilde{r}+nr-2mn}{m+n}\}, & \mbox{if }r-m\neq \tilde{r}-n, \\
		\emptyset, & \mbox{otherwise }.
	\end{cases}
\]
If $\lambda _{\mbox{min}}(G_1)$ and $\lambda _{\mbox{min}}(G_2)$ are the minimum eigenvalue of $G_1$ and $G_2$, respectively, by Theorem \ref{theorem1},
\begin{equation*}
    \qec(G_1+G_2)=-2-\max \{\lambda _{\mbox{min}}(G_1),\ \lambda _{\mbox{min}}(G_1),\ \frac{m\tilde{r}+nr-2mn}{m+n}\}.
\end{equation*}
\end{ex}
The formula for $\qec (K_2+P_3)$ is obtained in \cite[Theorem 3.9]{zakiyyah}. We now derive this from Theorem \ref{theorem1}.
\begin{ex}\label{example1}
Let $G_1=K_2$ and $G_2=P_3$. 
Here, $\sigma (A_{K_2})=\{1,-1\},\ \sigma (A_{P_3}-J)=\{0,-1,-2\},\ \sigma (A_{P_3})=\{0,\sqrt{2},-\sqrt{2}\}, \sigma _0 (A_{K_2})=\{-1\}$ and $\sigma _0 (A_{P_3})=\{0\}$. Since the matrix $A_{P_3}$ is
\begin{center}
$\begin{pmatrix}
	0 & 1 & 0\\
	1 & 0 & 1 \\
    0 & 1 & 0
\end{pmatrix}$,\ 
    $(A_{P_3}-\lambda I)^{-1}=\begin{pmatrix}
	\frac{-\lambda ^2+1}{\lambda ^3-2\lambda} & \frac{-1}{\lambda ^2-2} & \frac{-1}{\lambda ^3-2\lambda }\\
	\frac{-1}{\lambda ^2-2} & \frac{-\lambda}{\lambda ^2-2} & \frac{-1}{\lambda ^2-2} \\
    \frac{-1}{\lambda ^3-2\lambda } & \frac{-1}{\lambda ^2-2} & \frac{-\lambda ^2+1}{\lambda ^3-2\lambda}
\end{pmatrix}$
\end{center}
and $\langle e,(A_{P_3}-\lambda I)^{-1}e \rangle =-\frac{3\lambda +4}{\lambda ^2-2}$. So, the equation $2=(\lambda +3)\langle e,(A_{P_3}-\lambda I)^{-1}e \rangle$ has two solutions $-1$ and $-\frac{8}{5}$. This gives us
\begin{equation*}
    \Lambda_1 (K_2+P_3)=\{-1\},\ \Lambda_2 (K_2+P_3)=\emptyset ,\ \Lambda_3 (K_2+P_3)= \{0\}, \ \mbox{and}\ \Lambda_4 (K_2+P_3)=\{-\frac{8}{5}\}.
\end{equation*}
By Theorem \ref{theorem1},  $\qec (K_2+P_3)=-2-\min \{-\frac{8}{5}\}=-\frac{2}{5}$.
\end{ex}
We next derive QEC for several new classes of graphs as a consequence of our first main result.
\begin{ex}\label{example2}
Let $G_1=K_2$ and $G_2=P_4$. Now, $\sigma(A_{P_4})=\{\frac{\pm \sqrt{5}-1}{2}, \frac{\pm \sqrt{5}+1}{2}\},\ \sigma(A_{P_4}-J)=\{\frac{\pm \sqrt{5}-3}{2}, \frac{\pm \sqrt{5}-1}{2}\}$ and $\sigma _0(A_{P_4})=\{\frac{\pm \sqrt{5}-1}{2}\}$. Since $\langle e,(A_{P_4}-\lambda I)^{-1}e \rangle=-\frac{4\lambda +2}{\lambda ^2-\lambda -1}$, the equation $2=(\lambda +3)\langle e,(A_{P_4}-\lambda I)^{-1}e \rangle$ reduces to $3\lambda ^2 +6\lambda +2=0$ with two solutions $\frac{-3\pm \sqrt{3}}{3}$. Thus
\[
    \Lambda_1 (K_2+P_4)=\{-1\},\ \Lambda_2 (K_2+P_4)=\emptyset ,\ \Lambda_3 (K_2+P_4)= \left\{\frac{\pm \sqrt{5}-1}{2}\right\}, \ \mbox{and}\ \Lambda_4 (K_2+P_4)=\left\{\frac{-3\pm \sqrt{3}}{3}\right\}.
\]
By Theorem \ref{theorem1}, $\qec (K_2+P_4)=-2-\min \{\frac{-1- \sqrt{5}}{2},\frac{-3- \sqrt{3}}{3}\}=-2+\frac{1+\sqrt{5}}{2}=\frac{-3+\sqrt{5}}{2}$.
\end{ex}
We now extend the previous example to the case $G_1=K_m$. The graph $K_m+P_4$ plays an important role in reciprocal eigenvalue theory, having been studied in \cite[Definition 4]{barik} for constructing nonbipartite graphs with a strong reciprocal eigenvalue property. 
\begin{ex}\label{example3}
Let $G_1=K_m,m\geq 2$ and $G_2=P_4$. Then, $r=m-1, ~r-2m<-2, ~\sigma (A_{K_m})=\{m-1,-1\}$ and $\sigma _0 (A_{K_m})=\{-1\}$.  Also, $\alpha >-2$ for all $\alpha \in \sigma (A_{P_4})$. By a straightforward calculation one can verify that the roots of the equation $m=(\lambda +m+1)\langle e,(A_{P_4}-\lambda I)^{-1}e \rangle$ are $\frac{-(3m+6)\pm \sqrt{5m^2+12m+4}}{2(m+4)}$ and
\begin{equation}\label{inequality}
\frac{-(3m+6)- \sqrt{5m^2+12m+4}}{2(m+4)}\begin{cases}
> -\frac{1+\sqrt{5}}{2}, & \mbox{if }m=2, \\
<-\frac{1+\sqrt{5}}{2}\  \&\ >-2, & \mbox{if }m=3,4,5,\\
=-2, & \mbox{if }m=6,\\
<-2, & \mbox{if }m\geq 7.
\end{cases}
\end{equation}
Thus  $\Lambda_1 (K_m+P_4)=\{-1\}, ~\Lambda_2 (K_m+P_4)  =\emptyset,  ~\Lambda_3 (K_m+P_4) = \left\{\frac{\pm \sqrt{5}-1}{2}\right\}$, and $ \Lambda_4 (K_m+P_4) = \left\{\frac{-(3m+6)\pm \sqrt{5m^2+12m+4}}{2(m+4)}\right\}.$
Using Theorem \ref{theorem1}, we have
\[
\qec(K_m+P_4)=\begin{cases}
\frac{-3+\sqrt{5}}{2}, & \mbox{if }m=2, \\
\frac{-(m+10)+\sqrt{5m^2+12m+4}}{2(m+4)}, & \mbox{if }m\geq 3.
\end{cases}
\]
\end{ex}
\begin{rem}Note that $K_m+P_4$ is of QE-class if and only if $m\leq 6$. 
\end{rem}
In the next example, we compute $\qec(C_m + P_4)$. For $m = 3$, since $C_3 = K_3$, the value $\qec(K_3 + P_4)$ is given in Example~\ref{example3}. Thus, we consider $m \geq 4$.
\begin{ex}\label{example10}
Let $G_1=C_m$ with $m\geq 4$ and $G_2=P_4$. Then, $\sigma (A_{C_m})=\{2\cos \frac{2\pi k}{m}; k=1,2,\ldots ,m\}$ and $\sigma _0 (A_{C_m})=\sigma (A_{C_m})\backslash \{2\}$. Note that $r=2$,  $r-m\leq -2$ and $r-2m<-2$ for all $m\geq 4$. Thus
\begin{equation*}
 \Lambda _1 (C_m+P_4)=\begin{cases}
\{-2\}, & \mbox{if }m=4, \\
\emptyset , & \mbox{if }m\geq 5
\end{cases},
\quad \Lambda _2 (C_m+P_4)=\emptyset ,
\end{equation*}
\begin{equation*}
    \Lambda _3 (C_m+P_4)=\begin{cases}
\{0,\frac{\pm \sqrt{5}-1}{2}\}, & \mbox{if }m=4, \\
(\sigma (A_{C_m})\backslash \{2\})\cup \left\{\frac{\pm \sqrt{5}-1}{2}\right\} , & \mbox{if }m\geq 5.
\end{cases}
\end{equation*}
From \eqref{inequality} and using the fact that $-2\in \sigma _0(A_{C_6})$, we have
\begin{equation*}
   \Lambda _4 (C_m+P_4)=\begin{cases}
\{\frac{-(3m+6)\pm \sqrt{5m^2+12m+4}}{2(m+4)}\}, & \mbox{if }m\neq 6, \\
\{\frac{-(3m+6)+ \sqrt{5m^2+12m+4}}{2(m+4)} (>-2)\} , & \mbox{if }m= 6.
\end{cases} 
\end{equation*}
By Theorem \ref{theorem1},
\begin{center}
$\qec(C_m+P_4)=\begin{cases}
-2-(-2)=0, & \mbox{if }m=4,6 \\
-2-(\frac{-(3m+6)- \sqrt{5m^2+12m+4}}{2(m+4)})=\frac{-(m+10)+\sqrt{5m^2+12m+4}}{2(m+4)}, & \mbox{if }m=5,m\geq 7.
\end{cases}$  
\end{center}
\end{ex}
\begin{rem}
From the above formula, we can conclude that $C_m+P_4$ is of QE-class if and only if $m\leq 6$.
\end{rem}
We next turn our attention to the join of a complete graph $K_m$ and a complete bipartite graph $K_{p,q}$. If $p = q = 1$, then $K_m + K_{1,1}$ is a complete graph. Thus, without loss of generality, we assume that $p \geq 2$ and $q \geq 1$. For the complete bipartite graph $K_{p,q}$, $\sigma (A_{K_{p,q}})=\{0,\pm \sqrt{pq}\}$ with eigenvectors
$\{v^1,v^2,\ldots ,v^{p+q}\}$ defined as follows: 
for $1\leq j\leq p-1$,
\begin{equation*}
v^j_{i}=\begin{cases}
-1, & \mbox{if }i=1, \\
1, & \mbox{if }i=j+1,\\
0, & \mbox{Otherwise}.
\end{cases}
\end{equation*} For $p\leq j\leq p+q-2$ (with no contribution when $p=2$ and $q=1$),
\begin{equation*}
v^j_{i}=\begin{cases}
-1, & \mbox{if }i=p+1, \\
1, & \mbox{if }i=j+2,\\
0, & \mbox{Otherwise},
\end{cases}
\end{equation*}
\begin{center}
$v^{p+q-1}_i=\begin{cases}
\sqrt{\frac{q}{p}}, & \mbox{if }1\leq i\leq p, \\
1, & \mbox{Otherwise},
\end{cases}$ \ \ \ \ \ \ \ \ \ \ \ \ and  \ \ \ \ \ \ \ \ \ \ \ \ $v^{p+q}_i=\begin{cases}
-\sqrt{\frac{q}{p}}, & \mbox{if }1\leq i\leq p, \\
1, & \mbox{Otherwise}.
\end{cases}$
\end{center}

Define $P:=\left[v^1,v^2,\ldots,v^{p+q}\right]\in \mathbb{R}^{(p+q)\times (p+q)}$ and $D:=\diag\left(0,\ldots,0,\sqrt{pq},-\sqrt{pq}\right)\in \mathbb{R}^{(p+q)\times (p+q)}$.
Then $A_{K_{p,q}}=PDP^{-1}$ and 
$P^Te=(\langle v^1,e\rangle ,\langle v^2,e\rangle,\ldots ,\langle v^{p+q},e\rangle)^T=\Big (0,\ldots , 0,\sqrt{pq}+q,-\sqrt{pq}+q\Big )^T$. By a simple calculation, one can verify that
\begin{center}
$P^{-1}=\begin{pmatrix}
U & 0 \\
0 & X \\
Y & Z
\end{pmatrix},$
\end{center}
where $U=\begin{pmatrix}
	-\frac{1}{p}e & I-\frac{1}{p}J
\end{pmatrix}\in \mathbb{R}^{(p-1)\times p}$, $X=\begin{pmatrix}
	-\frac{1}{q}e & I-\frac{1}{q}J
\end{pmatrix}\in \mathbb{R}^{(q-1)\times q}$, $Y=\frac{1}{2\sqrt{pq}}\begin{pmatrix}
e^T \\
-e^T
\end{pmatrix}\in \mathbb{R}^{2\times p}$, and  $Z=\frac{1}{2q}\begin{pmatrix}
e^T\\
e^T
\end{pmatrix}\in \mathbb{R}^{2\times q}$.
Thus $P^{-1}e=\frac{1}{2}\Big (0,0,\ldots , 0,1+\sqrt{\frac{p}{q}},1-\sqrt{\frac{p}{q}}\Big )^T$ and
\begin{equation}\label{equation6}
   \langle e,(A_{K_{p,q}}-\lambda I)^{-1}e \rangle =\langle P^Te,(D-\lambda I)^{-1}P^{-1}e \rangle =\frac{1}{pq-\lambda ^2}(\lambda (p+q)+2pq).
\end{equation}

Using the above analysis, we now compute $\qec(K_m+K_{p,q})$.
\begin{ex}\label{exampletheorem}
Let $G_1=K_m,m\geq 1$ and $G_2=K_{p,q},\ p\geq 2,q\geq 1$. 
Then $\sigma (A_{p,q})=\{0,\pm \sqrt{pq}\}$,
\begin{equation*}
\sigma _0 (A_{p,q})=\begin{cases}
\{0,-p\}, & \mbox{if }p=q, \\
\{0\}, & \mbox{if }p\neq q,
\end{cases}
\end{equation*}
and $\sigma(A_{K_{p,q}}-J)=\{0,-p,-q\}$. Thus $\Lambda _1(K_m+K_{p,q})=\{-1\}$,
\begin{equation*}
    \Lambda _2 (K_m+K_{p,q})=\begin{cases}
\{-m-1\}, & \mbox{if }m=\sqrt{pq}-1, \\
\emptyset , & \mbox{Otherwise},
\end{cases}
\end{equation*}
and
\begin{equation*}
    \Lambda _3 (K_m+K_{p,q})=\begin{cases}
\{0\}, & \mbox{if }p\neq q, \\
\{0\}, & \mbox{if }p= q, m=p-1, \\
\{0,-p\} , & \mbox{Otherwise},
\end{cases}
\end{equation*}
Using \eqref{equation6}, the equation in $\Lambda _4(K_m+K_{p,q})$ reduces to
\begin{equation}\label{equation1}
    m = (\lambda +m+1)\frac{1}{pq-\lambda ^2}(\lambda (p+q)+2pq).
\end{equation}
For $p=q$, equation \eqref{equation1} has one root $-\frac{mp+2p}{m+2p}$. For $p\neq q$, the two roots of equation \eqref{equation1} are
\begin{equation}\label{equation2}
    \lambda _{\pm}=\frac{-(m+1)(p+q)-2pq\pm \sqrt{\{(m+1)(p+q)+2pq\}^2-4pq(m+p+q)(m+2)}}{2(m+p+q)}
\end{equation}
By \eqref{equation2}, it is easy to see that $\lambda _-<\lambda_+\leq -1$ and the equality holds if and only if either $p=1$, or $q=1$.
If $\lambda _-=-m-1$, then we have $(m+1)^2=pq$.
So, $\lambda _-=-m-1$, only when $m=\sqrt{pq}-1$. If $\lambda _-=-\sqrt{pq}$, then from \eqref{equation2}, we have $m=\sqrt{pq}-1$. Therefore,
\begin{equation*}
    \Lambda _4 (K_m+K_{p,q})=\begin{cases}
\{\lambda _{\pm}\}, & \mbox{if }p\neq q, q\neq 1, m\neq \sqrt{pq}-1 \\
\{\lambda _+\}, & \mbox{if }p\neq q, q\neq 1, m= \sqrt{pq}-1 \\
\{\lambda _-\}, & \mbox{if }p\neq q, q= 1, m\neq \sqrt{pq}-1 \\
\emptyset , & \mbox{if }p\neq q, q= 1, m= \sqrt{pq}-1 \\
\{-\frac{mp+2p}{m+2p}\}, & \mbox{if } p= q.
\end{cases}
\end{equation*}
By Theorem \ref{theorem1},
\begin{equation*}
\qec(K_m+K_{p,q})=\begin{cases}
-2-\min \{\lambda _\pm \}, & \mbox{if }p\neq q,q\neq 1, m\neq \sqrt{pq}-1, \\
-2-\min \{-m-1 (=\lambda _-),\lambda _+\}, & \mbox{if }p\neq q,q\neq 1, m= \sqrt{pq}-1, \\
-2-\min \{\lambda _-\}, & \mbox{if }p\neq q,q= 1, m\neq \sqrt{pq}-1, \\
-2-\min \{-m-1 (=\lambda _-)\}, & \mbox{if }p\neq q,q= 1, m= \sqrt{pq}-1, \\
-2-\min \{-p,-\frac{mp+2p}{m+2p}\}, & \mbox{if }p= q.
\end{cases}
\end{equation*}
Hence
\begin{equation*}
\qec(K_m+K_{p,q})=\begin{cases}
\frac{2(pq-2m)+(m-3)(p+q)+\sqrt{\{(m+1)(p+q)+2pq\}^2-4pq(m+p+q)(m+2)}}{2(m+p+q)}, & \mbox{if }p\neq q, \\
p-2, & \mbox{if }p= q.
\end{cases}
\end{equation*}
\end{ex}
\begin{rem}
Examples \ref{example1}--\ref{exampletheorem} demonstrate the necessity of computing all $\Lambda_i$ in the formula for the quadratic embedding constants in Theorem \ref{theorem1}. Note that
\[
K_m + K_{p,q} = K_{\underbrace{1,1,\ldots,1}_{m\ \text{times}}} + K_{p,q} = K_{p,q,\underbrace{1,1,\ldots,1}_{m\ \text{times}}}.
\]
Consequently, the formula in Example \ref{exampletheorem} follows from \cite[Theorem 1.1]{bata}.		
\end{rem}

\begin{ex}
Let $G_1=C_m,m\geq 5$ and $G_2=K_{p,q},\ p\geq 2,q\geq 1$. Without loss of generality assume that $p\geq q$. Recall that $\sigma (A_{C_m})=\{2\cos \frac{2\pi k}{m}; k=1,2,\ldots ,m\},\ \sigma _0 (A_{C_m})=\sigma (A_{C_m})\backslash \{2\},\ \sigma (A_{p,q})=\{0,\pm \sqrt{pq}\},\ \sigma(A_{K_{p,q}}-J)=\{0,-p,-q\}$ and
\begin{equation*}
\sigma _0 (A_{p,q})=\begin{cases}
\{0,-p\}, & \mbox{if }p=q, \\
\{0\}, & \mbox{if }p\neq q.
\end{cases}
\end{equation*}
 Also, note that $r-m=2-m< -2$ and $r-2m=2-2m<-2\ \forall \ m\geq 5$. So,
 \begin{equation*}
\Lambda _1(C_m+K_{p,q})=\begin{cases}
\{2-m\}, & \mbox{if }m=p+2 \mbox{ or, }q+2, \\
\emptyset , & \mbox{Otherwise},
\end{cases}
\end{equation*}
\begin{equation*}
\Lambda _2(C_m+K_{p,q})=\begin{cases}
\{2-2m\}, & \mbox{if }m=1+\frac{1}{2}\sqrt{pq}, \\
\emptyset , & \mbox{Otherwise},
\end{cases}
\end{equation*}
\begin{equation*}
\Lambda _3(C_m+K_{p,q})=\begin{cases}
(\sigma(A_{C_m})\backslash \{2\})\cup \{0\}, & \mbox{if }p\neq q, \\
(\sigma(A_{C_m})\backslash \{2\})\cup \{0\} , & \mbox{if }p=q\mbox{ and }m=p+2\mbox{ or }1+\frac{p}{2}, \\
(\sigma(A_{C_m})\backslash \{2\})\cup \{0,-p\} , & \mbox{Otherwise}.
\end{cases}
\end{equation*}
Using \eqref{equation6}, the equation in $\Lambda _4(C_m+K_{p,q})$ becomes
\begin{equation}\label{equation7}
    m = (\lambda +2m-2)\frac{1}{pq-\lambda ^2}(\lambda (p+q)+2pq).
\end{equation}
For $p=q$, the equation \eqref{equation7} has exactly one root $\frac{-3mp+4p}{m+2p}$. For $p\neq q$, the equation \eqref{equation7} reduces to
\begin{equation*}
    (m+p+q)\lambda ^2+2\big ((m-1)(p+q)+pq\big )\lambda +(3m-4)pq=0
\end{equation*}
with the roots
\begin{equation}\label{equation8}
   \lambda _{\pm}= \frac{-(m-1)(p+q)-pq\pm \sqrt{\{(m-1)(p+q)+pq\}^2-pq(m+p+q)(3m-4)}}{(m+p+q)}.
\end{equation}
Now, equating $\lambda _-$ or $\lambda _+$ with $2-m$, we have a relation
\begin{equation*}
    m^2-(4+p+q)m+(p+2)(q+2)=0.
\end{equation*}
So,
\begin{equation*}
2-m=\begin{cases}
\lambda _-, & \mbox{only if }m=p+2, \\
\lambda _+, & \mbox{only if }m=q+2. \\
\end{cases}
\end{equation*}
On the other hand, from $\lambda _-=2-2m$ or $\lambda _+=2-2m$, we get
\begin{equation*}
    4m^2-8m+4-pq=0,
\end{equation*}
and then
\begin{equation*}
2-2m= \lambda _-, \mbox{only if }m=1+\frac{1}{2}\sqrt{pq}.
\end{equation*}
In addition, $\lambda _+=2-m$ when $m=q+2=1+\frac{1}{2}\sqrt{pq}$. Also, equating $\lambda _+$ or $\lambda _-$ with $-\sqrt{pq}$, we have $m=1+\frac{1}{2}\sqrt{pq}$. A straightforward calculation shows that $\frac{-3mp+4p}{m+2p},\lambda _-<-2\ \forall \ m\geq 5,p\geq 2,q\geq 1$. Note that for $p>q$, $p+2$ never be equal to $q+2$ or $1+\frac{1}{2}\sqrt{pq}$. Let $\tilde{\Lambda _4} (C_m+K_{p,q})$ is the set obtained from $\Lambda _4 (C_m+K_{p,q})$ by not removing the elements of $\sigma (A_{C_n})$. Then,
\begin{equation*}
    \tilde{\Lambda _4} (C_m+K_{p,q})=\begin{cases}
\{\lambda _{\pm}\}, & \mbox{if }p\neq q,m\neq p+2,q+2,1+\frac{1}{2}\sqrt{pq}, \\
\{\lambda _+\}, & \mbox{if }p\neq q,m= p+2, \\
\{\lambda _-\}, & \mbox{if }p\neq q,m= q+2,m\neq 1+\frac{1}{2}\sqrt{pq}, \\
\{\lambda _+\}, & \mbox{if }p\neq q,m\neq  q+2,m= 1+\frac{1}{2}\sqrt{pq}, \\
\{\frac{-3mp+4p}{m+2p}\}, & \mbox{if } p= q, m\neq p+2, \\
\emptyset ,& \mbox{Otherwise.}
\end{cases}
\end{equation*}
By Theorem \ref{theorem1},
\begin{equation*}
\qec(C_m+K_{p,q})=\begin{cases}
-2-\min \{\sigma(A_{C_m})\backslash \{2\},\lambda _\pm\}, & \mbox{if }p\neq q, m\neq p+2,q+2,1+\frac{1}{2}\sqrt{pq}, \\
-2-\min \{2-m (=\lambda _-),\sigma(A_{C_m})\backslash \{2\},\lambda _+\}, & \mbox{if }p\neq q, m=p+2, \\
-2-\min \{2-m (=\lambda _+),\sigma(A_{C_m})\backslash \{2\},\lambda _-\}, & \mbox{if }p\neq q, m=q+2,m\neq 1+\frac{1}{2}\sqrt{pq} \\
-2-\min \{2-2m (=\lambda _-),\sigma(A_{C_m})\backslash \{2\},\lambda _+\}, & \mbox{if }p\neq q, m\neq q+2,m= 1+\frac{1}{2}\sqrt{pq} \\
-2-\min \{2-m (=\lambda _+),2-2m (=\lambda _-),\sigma(A_{C_m})\backslash \{2\}\}, & \mbox{if }p\neq q,m=q+2,1+\frac{1}{2}\sqrt{pq}, \\
-2-\min \{2-m,\sigma(A_{C_m})\backslash \{2\}\}, & \mbox{if }p= q, m= p+2, \\
-2-\min \{2-2m,\sigma(A_{C_m})\backslash \{2\},\frac{-3mp+4p}{m+2p}\}, & \mbox{if }p= q, m= 1+\frac{p}{2}, \\
-2-\min \{\sigma(A_{C_m})\backslash \{2\},-p,\frac{-3mp+4p}{m+2p}\}, & \mbox{if }p= q, m\neq p+2, 1+\frac{p}{2},
\end{cases}
\end{equation*}
Hence
\begin{equation*}
\qec(C_m+K_{p,q})=\begin{cases}
\frac{pq-2m+(m-3)(p+q)+\sqrt{\{(m-1)(p+q)+pq\}^2-pq(m+p+q)(3m-4)}}{(m+p+q)}, & \mbox{if }p\neq q, \\
p-2, & \mbox{if }p= q, m\leq p+2, \\
\frac{3mp-2m-8p}{m+2p}, & \mbox{if }p= q, m> p+2.
\end{cases}
\end{equation*}
\end{ex}
We next prove Theorem \ref{theorem2} -- we derive quadratic embedding constant of $K_{m_1,m_2,\ldots ,m_k}+G$, where $G$ is a graph of order $n$.  
The adjacency matrix of $K_{m_1,m_2,\ldots ,m_k}$  is a matrix of order $m_1+m_2+\cdots +m_k$ of the form
\[A_{K_{m_1,m_2,\ldots ,m_k}}=\begin{pmatrix}
0 & J & J & \hdots & J \\
J & 0 & J & \hdots & J \\
J & J & 0 & \hdots & J \\
\vdots & \vdots & \vdots & \ddots & \vdots \\
J & J & J & \hdots & 0
\end{pmatrix}.\]
For $A_{K_{m_1,m_2,\ldots ,m_k}+G}$, the  system of equations \eqref{firstequation}-\eqref{fourthequation} reduces to
\begin{eqnarray}
	(J+\lambda I)x^i&=&-\frac{\mu}{2}\  e \hbox{~~~for~}\ i\in[k] \label{cfirstequation} \\
	(A_G-J-\lambda I)y&=&\frac{\mu}{2}\  e, \label{csecondequation} \\
	\sum _{i=1}^k\langle e,x^i \rangle + \langle e,y \rangle& =&0, \label{cthirdequation} \\
    \sum _{i=1}^k\langle x^i,x^i \rangle + \langle y,y \rangle& =&1, \label{cfourthequation}
\end{eqnarray}
where $x=({x^1}^T,{x^2}^T,\ldots ,{x^k}^T)^T$ and $x^i\in \mathbb{R}^{m_i}$ for all $i\in[k]$ and $y \in \mathbb{R}^n$. 

\begin{proof}[Proof of Theorem \ref{theorem2}]
Since $K_{m_1,m_2,\ldots ,m_k}+G$ is not a complete graph, by equation \eqref{newgraphjoinqec} and Proposition \ref{qeccomplete}, $\qec (K_{m_1,m_2,\ldots ,m_k}+G)=-2-\min \left(\Lambda (K_{m_1,m_2,\ldots ,m_k}+G)\cap(-\infty,-1)\right)$. To complete the proof, it is sufficient to show that 
\[\Lambda (K_{m_1,m_2,\ldots ,m_k}+G)\cap(-\infty,-1)=\bigcup\limits_{i=1}^4(\Lambda_i (K_{m_1,m_2,\ldots ,m_k}+G)\cap (-\infty , -1))\cup \{-m_{i_p}: p\in[q], a_p\geq 2, m_{i_p}\neq 1\}.\]
Without loss of generality, assume that $m_1\geq m_2\geq \cdots \geq m_k$. We first claim that  if $a_p\geq 2$ then $-m_{i_p}\in \Lambda (K_{m_1,m_2,\ldots ,m_k}+G)$; in other words, $-m_{i_p}$ appears in  the solution of  \eqref{cfirstequation}-\eqref{cfourthequation}.  Let $a_p\geq 2$ for some $p\in[q]$ and consider two possible cases.

\noindent {\bf Case I.} $a_p$ is even:  Choose $\mu =0,~y=0$ and $x=({x^1}^T,{x^2}^T,\ldots ,{x^k}^T)^T\in \mathbb{R}^{m_1+m_2+\cdots m_k}$ such that
\begin{center}
$x^j=\begin{cases}
\frac{1}{\sqrt{a_pm_{i_p}}}e, & \mbox{if }\sum\limits_{g=1}^{p-1}a_g+1\leq j\leq \sum\limits _{g=1}^{p-1}a_g+\frac{a_p}{2}, \\
-\frac{1}{\sqrt{a_pm_{i_p}}}e, & \mbox{if }\sum\limits _{g=1}^{p-1}a_g+\frac{a_p}{2}+1\leq j\leq \sum \limits_{g=1}^{p-1}a_g+a_p,\\
0, & \mbox{otherwise.}
\end{cases}$
\end{center}
Then, $(x^1,\ldots ,x^k,y,-m_{i_p} ,\mu )$ is a solution of (\ref{cfirstequation})-(\ref{cfourthequation}).

\noindent {\bf Case II.} $a_p$ is odd: Take $\mu =0$, $y=0$ and
\begin{center}
$x^j=\begin{cases}
\frac{1}{\sqrt{(a_p-1)m_{i_p}}}e, & \mbox{if }\sum\limits _{g=1}^{p-1}a_g+1\leq j\leq \sum\limits _{g=1}^{p-1}a_g+\frac{a_p-1}{2}, \\
-\frac{1}{\sqrt{(a_p-1)m_{i_p}}}e, & \mbox{if }\sum\limits _{g=1}^{p-1}a_g+\frac{a_p-1}{2}+1\leq j\leq \sum\limits _{g=1}^{p-1}a_g+(a_p-1),\\
0, & \mbox{otherwise.}
\end{cases}$
\end{center}
Then $(x^1,\ldots ,x^k,-m_{i_p},\mu )$ is a solution of (\ref{cfirstequation})-(\ref{cfourthequation}).

To classify all  other elements of $\Lambda (K_{m_1,m_2,\ldots ,m_k}+G)\cap(-\infty,-1)$, we now split the rest of the proof into several lemmas.

\begin{lem}\label{firstlemma}
Let $\lambda = -m_{i_{p}}$ for some $p\in[q]$ such that $a_p=1$. Then $\lambda$ appears in the solutions of \eqref{cfirstequation}-\eqref{cfourthequation} if and only if $\lambda \in \sigma(A_G-J)$.
\end{lem}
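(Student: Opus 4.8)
The plan is to mimic the proof of Lemma~\ref{lemma1}, treating $\lambda=-m_{i_p}$ as the complete-multipartite analogue of the eigenvalue $r-m$. The decisive structural fact is the eigenstructure of the $m_i\times m_i$ block $J+\lambda I=J-m_{i_p}I$ appearing in \eqref{cfirstequation}: it acts as $m_i-m_{i_p}$ on the all-ones vector $e$ and as $-m_{i_p}$ on $e^{\perp}$. Since $a_p=1$, there is exactly one part, say the $i_0$-th, of size $m_{i_0}=m_{i_p}$; for every other part $m_i\neq m_{i_p}$, so both eigenvalues are nonzero and $J+\lambda I$ is invertible there, while on the $i_0$-th block its kernel is precisely the line through $e$.

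For the forward direction I would start from a solution $(x^1,\dots,x^k,y,\lambda,\mu)$ with $\lambda=-m_{i_p}$. Pairing \eqref{cfirstequation} for $i=i_0$ with $e$ and using symmetry of $J+\lambda I$ together with $(J+\lambda I)e=(m_{i_p}-m_{i_p})e=0$ forces $\mu=0$. With $\mu=0$, \eqref{cfirstequation} becomes $(J+\lambda I)x^i=0$ for all $i$; invertibility gives $x^i=0$ for $i\neq i_0$, and $x^{i_0}=c\,e$ for some scalar $c$. Equation \eqref{csecondequation} then reduces to $(A_G-J)y=\lambda y$, so it remains to rule out $y=0$: if $y=0$ then \eqref{cthirdequation} gives $c\,m_{i_p}=0$, hence $c=0$, and every component vanishes, contradicting \eqref{cfourthequation}. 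Thus $y\neq 0$ and $\lambda\in\sigma(A_G-J)$.

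For the converse I would take an eigenvector $y_0\neq 0$ of $A_G-J$ for $\lambda$ and set $\mu=0$, $y=\beta y_0$, $x^{i_0}=\alpha e$, and $x^i=0$ for $i\neq i_0$. Equations \eqref{cfirstequation} and \eqref{csecondequation} then hold automatically, and \eqref{cthirdequation}--\eqref{cfourthequation} become the two scalar conditions $\alpha m_{i_p}+\beta\langle e,y_0\rangle=0$ and $\alpha^2 m_{i_p}+\beta^2\langle y_0,y_0\rangle=1$. Eliminating $\alpha$ yields $\beta^2\big(\langle e,y_0\rangle^2/m_{i_p}+\langle y_0,y_0\rangle\big)=1$, whose left-hand coefficient is strictly positive because $y_0\neq 0$; hence a nonzero pair $(\alpha,\beta)$ exists and produces a genuine solution of \eqref{cfirstequation}--\eqref{cfourthequation}.

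I expect the only real obstacle to be bookkeeping rather than ideas: carefully exploiting $a_p=1$ to isolate the single singular block, and confirming the normalization system is solvable. The step I would watch most closely is forcing $\mu=0$ and excluding $y=0$, since these are exactly the places where the hypothesis $a_p=1$ (as opposed to $a_p\geq 2$, handled separately) is used --- when $a_p\geq 2$ the extra copies of $e$ supply solutions with $y=0$, which is why those eigenvalues are treated in the separate claim preceding the lemmas.
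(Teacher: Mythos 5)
Your proof is correct and follows essentially the same route as the paper's: force $\mu=0$ by pairing the $i_0$-th block equation with $e$, use the kernel/invertibility structure of $J-m_{i_p}I$ on the blocks to get $x^{i_0}=ce$ and $x^i=0$ otherwise, reduce to $(A_G-J)y=\lambda y$ with the two scalar normalization conditions, and rule out $y=0$. You actually spell out the solvability of the normalization system more explicitly than the paper, which simply asserts that appropriate constants can be chosen.
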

\begin{proof}
Suppose $(x,y,-m_{i_{p}},\mu)$ is a solution of \eqref{cfirstequation}-\eqref{cfourthequation}, where $x=({x^1}^T,{x^2}^T,\ldots ,{x^k}^T)^T\in \mathbb{R}^{m_1+\cdots +m_k},y\in \mathbb{R}^n,\mu \in \mathbb{R}$ . From \eqref{cfirstequation}, one can derive
\begin{equation}\label{xi}
\mu =0,\ x^{i_{p}}=ce \mbox{ for arbitrary }c\in \mathbb{R},\  x^j=0 \ \mbox{ for all } \ j\in[k] \mbox{ and }j\neq i_{p}.
\end{equation}
Thus the equations \eqref{csecondequation}-\eqref{cfourthequation} reduces to the following
\begin{equation}\label{threeequation}
(A_G-J+m_{i_{p}} I)y=0,\ cm_{i_{p}}+\langle e,y \rangle =0,\ c^2m_{i_{p}}+\langle y,y \rangle =1.
\end{equation}
If $y=0$, then $c=0$ from the second identity of \eqref{threeequation} and hence the third identity does not hold. Thus $y\neq 0$ and $-m_{i_{p}}\in \sigma(A_G-J)$.

Conversely, suppose $-m_{i_{p}} \in \sigma(A_G-J)$ for some $p\in[q]$ with $a_p=1$. Then, there exists an $0\neq y_0\in \mathbb{R}^n$ such that $(A_G-J+m_{i_{p}} I)y_0=0$. Define $x:=({x^1}^T,{x^2}^T,\ldots ,{x^k}^T)^T$ such that $x^j\in \mathbb{R}^{m_j}$ and
\begin{center}
$x^j:=\begin{cases}
ce, & \mbox{if }j=i_{p}, \\
0, & \mbox{Otherwise.}
\end{cases}$
\end{center}
By choosing appropriate values of $\gamma $ and $c$, one can show that $(x,y=\gamma y_0,-m_{i_{p}}, 0)$ is a solution of \eqref{cfirstequation}-\eqref{cfourthequation}.
\end{proof}
Since $\lambda<-1$, before we proceed to the next lemma, note that for $\lambda \neq -m_1,-m_2,\ldots , -m_k$, equation \eqref{cfirstequation} implies that
\begin{equation}\label{cnewfirstequation}
x^i=-\frac{1}{\lambda +m_i}\frac{\mu}{2}e,\ \  i\in [k]
\end{equation}
and equations \eqref{csecondequation}, \eqref{cthirdequation} implies that
\begin{equation}\label{cnewsecondequation}
(A_G-\lambda I)y=\frac{\mu}{2}e+Jy=\frac{\mu}{2}e+(-\sum _{i=1}^k\langle e,x^i \rangle)e=\Big (1+\sum _{p=1}^q\frac{a_pm_{i_p}}{\lambda +m_{i_p}} \Big )\frac{\mu}{2}e=P(\lambda)\frac{\mu}{2}e,
\end{equation}
where $P$ is defined in \eqref{eqplambda}.
\begin{lem}\label{secondlemma}
Let $\lambda = \lambda _i$ for some $p\in[q]$. Then $\lambda$ appears in the solutions of \eqref{cfirstequation}-\eqref{cfourthequation} if and only if $\lambda  \in \sigma(A_G)$.
\end{lem}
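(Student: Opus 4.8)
The plan is to exploit the fact that $\lambda_p$ is, by definition, a zero of $P$ and hence neither a pole of $P$ nor equal to any $-m_i$ (the zeros of $P$ lie strictly between consecutive poles, as noted after \eqref{eqplambda}). Consequently, for $\lambda=\lambda_p$ both \eqref{cnewfirstequation} and \eqref{cnewsecondequation} are available, and since $P(\lambda_p)=0$, equation \eqref{cnewsecondequation} collapses to $(A_G-\lambda_p I)y=0$. This already makes the biconditional believable: the only way $\lambda_p$ can enter a solution is through an eigenvector of $A_G$. I would prove the two implications separately.

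For the forward direction, suppose $(x,y,\lambda_p,\mu)$ solves \eqref{cfirstequation}--\eqref{cfourthequation}. Then $(A_G-\lambda_p I)y=0$, so it suffices to rule out $y=0$, for then $y$ is a genuine eigenvector and $\lambda_p\in\sigma(A_G)$. The key computation is that, using \eqref{cnewfirstequation}, $\sum_{i=1}^k\langle e,x^i\rangle=-\frac{\mu}{2}\sum_{p'=1}^q\frac{a_{p'}m_{i_{p'}}}{\lambda_p+m_{i_{p'}}}=-\frac{\mu}{2}\bigl(P(\lambda_p)-1\bigr)=\frac{\mu}{2}$, precisely because $P(\lambda_p)=0$. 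If $y=0$, then \eqref{cthirdequation} forces $\frac{\mu}{2}=0$, whence \eqref{cnewfirstequation} gives $x^i=0$ for all $i\in[k]$; this contradicts the normalization \eqref{cfourthequation}. Hence $y\neq0$, and $\lambda_p\in\sigma(A_G)$.

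For the converse, assume $\lambda_p\in\sigma(A_G)$ and pick an eigenvector $y_0\neq0$ with $(A_G-\lambda_p I)y_0=0$; I would construct an explicit solution according to whether $y_0\perp e$. If $\langle e,y_0\rangle=0$, take $\mu=0$, all $x^i=0$, and $y=y_0/\lVert y_0\rVert_2$; the four equations are then immediate. If $\langle e,y_0\rangle\neq0$, take $\mu\neq0$, define $x^i=-\frac{1}{\lambda_p+m_i}\frac{\mu}{2}e$ as in \eqref{cnewfirstequation}, and set $y=\gamma y_0$ with $\gamma=-\frac{\mu}{2\langle e,y_0\rangle}$ chosen so that \eqref{cthirdequation} holds (using again $\sum_{i=1}^k\langle e,x^i\rangle=\frac{\mu}{2}$); one then checks \eqref{csecondequation} reduces to $-\langle e,y\rangle e=\frac{\mu}{2}e$, which is exactly what this choice of $\gamma$ guarantees. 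The main obstacle is this last $\mu\neq0$ branch, specifically the final normalization: \eqref{cfourthequation} becomes $\bigl(\tfrac{\mu}{2}\bigr)^2\bigl(\sum_{i=1}^k\frac{m_i}{(\lambda_p+m_i)^2}+\frac{\langle y_0,y_0\rangle}{\langle e,y_0\rangle^2}\bigr)=1$, and I must confirm the bracketed coefficient is a strictly positive constant so that a nonzero $\mu$ can be chosen — which is clear since every summand is positive. Combining both directions yields the claimed equivalence.
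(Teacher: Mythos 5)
Your argument is correct and follows essentially the same route as the paper: both directions hinge on $P(\lambda_p)=0$ collapsing \eqref{cnewsecondequation} to $(A_G-\lambda_p I)y=0$ and on the identity $\sum_{i}\langle e,x^i\rangle=-\frac{\mu}{2}(P(\lambda_p)-1)=\frac{\mu}{2}$ to rule out $y=0$, and your converse construction $x^i=-\frac{1}{\lambda_p+m_i}\frac{\mu}{2}e$, $y=\gamma y_0$ is exactly the paper's, with your two cases ($\langle e,y_0\rangle=0$ or not) merely making explicit the two branches of the paper's simultaneous choice of $c$ and $\mu$.
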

\begin{proof}
Let $\lambda = \lambda _i$ for some $p\in[q]$. Since $\lambda_i$ is zero of $P$,
\[P(\lambda)=1+\sum _{p=1}^q\frac{a_pm_{i_p}}{\lambda  +m_{i_p}}=0 .\]
 Suppose $\lambda$ appears in the solutions of \eqref{cfirstequation}-\eqref{cfourthequation}. Then there exist $x=({x^1}^T,{x^2}^T,\ldots ,{x^k}^T)^T\in \mathbb{R}^{m_1+\cdots +m_k},y\in \mathbb{R}^n,\mu \in \mathbb{R}$ such that $(x,y,\lambda,\mu)$ satisfies \eqref{cnewfirstequation}, \eqref{cnewsecondequation}, \eqref{cthirdequation} and \eqref{cfourthequation}. By \eqref{cnewsecondequation}, $(A_G-\lambda I)y=0$. We next show that $y\neq 0$.
If not, then from \eqref{cnewfirstequation} and \eqref{cthirdequation}, we have
\begin{equation*}
-\frac{\mu}{2}\sum _{p=1}^q\frac{a_pm_{i_p}}{\lambda +m_{i_p}}=0.
\end{equation*}
This implies $\mu=0$ and so $x^j=0$ for all $j\in[k]$, which contradicts \eqref{cfourthequation}. Thus $y\neq 0$ and from \eqref{cnewsecondequation}, $\lambda \in \sigma(A_G)$.

Conversely, suppose $\lambda \in \sigma(A_G)$. Then there exists an $0\neq y_0\in \mathbb{R}^n$ such that
\begin{equation*}
(A_G-\lambda I)y_0=0.
\end{equation*}
Choose $x^j=-\frac{1}{\lambda +m_j}\frac{\mu}{2}e$ for $j\in [k]$ and $c,\mu$ such that
\begin{eqnarray*}
	-\frac{\mu}{2}\Big (\sum_{p=1}^q\frac{a_pm_{i_p}}{\lambda+m_{i_p}}\Big )+c\langle e,y_0 \rangle&=&0, \\
    \Big (\frac{\mu}{2} \Big )^2\Big (\sum_{p=1}^q\frac{a_pm_{i_p}}{(\lambda +m_{i_p})^2} \Big ) +c^2\langle y_0,y_0 \rangle & =&1.
\end{eqnarray*}
Then $(x,y,\lambda,\mu)$ is a solution of \eqref{cfirstequation}-\eqref{cfourthequation}, where $x=({x^1}^T,{x^2}^T,\ldots ,{x^k}^T)^T\in \mathbb{R}^{m_1+\cdots +m_k},y=cy_0\in \mathbb{R}^n$.
\end{proof}
\begin{lem}\label{fourthlemma}
Let $\lambda \in \sigma(A_G)\backslash \{-m_{i_p},\lambda _p: p\in[q]\}$. Then $\lambda$ appears in the solutions of \eqref{cfirstequation}-\eqref{cfourthequation} if and only if $\lambda \in \sigma_0(A_G)$.
\end{lem}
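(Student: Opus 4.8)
The plan is to mirror the structure of the proof of Lemma \ref{lemma4}, which is the exact analogue for Theorem \ref{theorem1}; the present situation is in fact simpler, since only the single graph $G$ (the $y$-component) carries spectral information, while the $x^i$ are pinned down by \eqref{cnewfirstequation}. Throughout I use two consequences of the hypotheses: the condition $\lambda \neq -m_{i_p}$ for all $p$ is equivalent to $\lambda \neq -m_i$ for all $i\in[k]$ (since $\{-m_i:i\in[k]\}=\{-m_{i_p}:p\in[q]\}$ as sets), so that \eqref{cnewfirstequation} and \eqref{cnewsecondequation} are available; and the condition $\lambda \neq \lambda_p$ for all $p$ guarantees $P(\lambda)\neq 0$.

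For the backward direction, suppose $\lambda \in \sigma_0(A_G)$, so there is a unit vector $y_0$ with $A_Gy_0=\lambda y_0$ and $\langle e,y_0\rangle=0$. Setting $\mu=0$ forces each $x^i=0$ via \eqref{cnewfirstequation}, and then \eqref{cnewsecondequation} reduces to $(A_G-\lambda I)y=0$; one checks directly that $(x=0,\,y=y_0,\,\lambda,\,\mu=0)$ satisfies \eqref{cfirstequation}--\eqref{cfourthequation}, so $\lambda$ appears in the solution set.

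For the forward direction, suppose $(x,y,\lambda,\mu)$ solves \eqref{cfirstequation}--\eqref{cfourthequation}; then \eqref{cnewfirstequation} and \eqref{cnewsecondequation} hold, and I split on $\mu$. If $\mu=0$, then every $x^i=0$ by \eqref{cnewfirstequation}, so \eqref{cfourthequation} forces $y\neq 0$, while \eqref{cnewsecondequation} gives $(A_G-\lambda I)y=0$ and \eqref{cthirdequation} gives $\langle e,y\rangle=0$; hence $y$ is an eigenvector of $A_G$ for $\lambda$ orthogonal to $e$, i.e.\ $\lambda\in\sigma_0(A_G)$. If $\mu\neq 0$, then since $P(\lambda)\neq 0$ the right-hand side of \eqref{cnewsecondequation} is a nonzero multiple of $e$, so $e=\tfrac{2}{P(\lambda)\mu}(A_G-\lambda I)y$ lies in the column space of $A_G-\lambda I$. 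Consequently $\rk(A_G-\lambda I,~e)=\rk(A_G-\lambda I)<n$ because $\lambda\in\sigma(A_G)$. The augmented $n\times(n+1)$ matrix therefore has a nontrivial left null vector $z$ with $\langle z,z\rangle=1$ and $z^T(A_G-\lambda I,~e)=0$; symmetry of $A_G-\lambda I$ then yields $(A_G-\lambda I)z=0$ together with $\langle e,z\rangle=0$, so again $\lambda\in\sigma_0(A_G)$.

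The only genuinely delicate step is the case $\mu\neq 0$: here the vector $y$ produced by \eqref{cnewsecondequation} solves an inhomogeneous equation and need not be orthogonal to $e$, so one cannot read off $\lambda\in\sigma_0(A_G)$ directly from $y$. The rank-equality trick---observing that adjoining $e$ does not raise the rank precisely because $e$ already lies in the range of $A_G-\lambda I$, and then extracting a left null vector that is automatically orthogonal to $e$ by symmetry---is what repairs this, and it is the crux of the argument, exactly as in Case II of Lemma \ref{lemma4}.
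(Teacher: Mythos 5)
Your proposal is correct and follows essentially the same route as the paper: the same explicit construction $(x^i=0,\,y=y_0/\|y_0\|,\,\mu=0)$ for the backward direction, and the same split on $\mu$ with the rank argument $\rk(A_G-\lambda I,\ e)=\rk(A_G-\lambda I)<n$ and a symmetric left null vector for the case $\mu\neq 0$. Your added justification that $e$ lies in the column space of $A_G-\lambda I$ (because $P(\lambda)\mu\neq 0$) only makes explicit what the paper leaves implicit.
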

\begin{proof}
Suppose $\lambda \in \sigma_0(A_G)$. Then there exists $0\neq y\in \mathbb{R}^n$ such that
\begin{equation*}
A_Gy=\lambda y \ \ \mbox{and}\ \  \langle e,y \rangle =0.
\end{equation*}
Then $(x^1=0,x^2=0,\ldots ,x^k=0,\frac{y}{\parallel y\parallel _2},\lambda ,\mu =0)$ is a solution of \eqref{cfirstequation}-\eqref{cfourthequation}.

To prove the converse, let $(x_1,\ldots,x_k,y,\lambda, \mu)\in \mathbb{R}^{m_1}\times \cdots\times \mathbb{R}^{m_k} \times \mathbb{R}^n \times \mathbb{R} \times \mathbb{R}$ be a solution of the system of equations \eqref{cfirstequation}-\eqref{cfourthequation}. To show $\lambda \in \sigma_0(A_G)$,  we now consider two cases.

\noindent {\bf Case I.} $\mu =0$: By equations \eqref{cnewfirstequation}, \eqref{cnewsecondequation}, we have 
\begin{eqnarray*}
	x^i&=&0 \hbox{~~for~~}  i\in[k] \\
	A_Gy&=&\lambda y.
\end{eqnarray*}
From \eqref{cthirdequation} and \eqref{cfourthequation}, we have $\langle e,y \rangle =0$ and $y\neq 0$. Thus $\lambda \in \sigma_0(A_G)$.

\noindent {\bf Case II.} $\mu \neq 0$: Since $\lambda$ is not a zero of $P$ and $y$ is a solution of the equation \eqref{cnewsecondequation}, $\rk(A_{G}-\lambda I,~e)= \rk(A_{G}-\lambda I)$. Thus $\rk(A_{G}-\lambda I,~e)<n$,  since $A_{G} \in \mathbb{R}^{n \times n}$ and $\lambda \in \sigma (A_{G})$. Hence there exists a $z \in \mathbb{R}^n$ with $\langle z, z\rangle=1$ such that $z^T(A_{G}-\lambda I,~e)=0$.  Since $(A_{G}-\lambda I)$ is a symmetric matrix,
\begin{equation*}
	(A_{G}-\lambda I)z=0\ \ \  \mbox{and}\ \ \  \langle e,z\rangle =0.
\end{equation*}
Thus $\lambda \in \sigma_0(A_G)$ and concludes the proof.
\end{proof}
\begin{lem}\label{thirdlemma}
	Let $\lambda \in \mathbb{R}\backslash ( \{-m_{i_p},\lambda _p: p\in[q]\}\cup \sigma(A_G) )$. Then $\lambda$ appears in the solutions of \eqref{cfirstequation}-\eqref{cfourthequation} if and only if $\lambda$ is the solutions of
	\begin{equation}\label{rnew3equation}
		P(\lambda)\langle e,(A_G-\lambda I)^{-1}e \rangle-(P(\lambda)-1)=0	
	\end{equation}
\end{lem}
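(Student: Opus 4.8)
The plan is to follow the template of Lemma~\ref{lemma3}, with the scalar $\tfrac{r-2m-\lambda}{r-m-\lambda}$ appearing there replaced here by the function $P(\lambda)$. The hypotheses furnish the two standing reductions: since $\lambda\notin\{-m_{i_p}:p\in[q]\}$ we may use \eqref{cnewfirstequation} to write each $x^i$ explicitly, and since $\lambda\notin\sigma(A_G)$ the matrix $A_G-\lambda I$ is invertible, so \eqref{cnewsecondequation} determines $y$ once $\mu$ is known. Everything rests on one bookkeeping identity: since $e\in\mathbb{R}^{m_i}$ has $\langle e,e\rangle=m_i$, the vectors of \eqref{cnewfirstequation} satisfy $\langle e,x^i\rangle=-\tfrac{\mu}{2}\tfrac{m_i}{\lambda+m_i}$, and grouping the equal $m_i$ into the distinct values $m_{i_p}$ with multiplicities $a_p$ gives $\sum_{i=1}^k\langle e,x^i\rangle=-\tfrac{\mu}{2}\sum_{p=1}^q\tfrac{a_pm_{i_p}}{\lambda+m_{i_p}}=-\tfrac{\mu}{2}\bigl(P(\lambda)-1\bigr)$.

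For the forward direction I would start from a solution $(x,y,\lambda,\mu)$ of \eqref{cfirstequation}--\eqref{cfourthequation} and invert \eqref{cnewsecondequation} to obtain $y=\tfrac{\mu}{2}P(\lambda)(A_G-\lambda I)^{-1}e$. The remaining constraint \eqref{cthirdequation}, rewritten through the bookkeeping identity, demands $\langle e,y\rangle=\tfrac{\mu}{2}(P(\lambda)-1)$; equating this with $\langle e,y\rangle=\tfrac{\mu}{2}P(\lambda)\langle e,(A_G-\lambda I)^{-1}e\rangle$ obtained from the explicit formula for $y$ yields
\[
\frac{\mu}{2}\Bigl(P(\lambda)\langle e,(A_G-\lambda I)^{-1}e\rangle-(P(\lambda)-1)\Bigr)=0.
\]
It then remains to exclude $\mu=0$: were $\mu=0$, then \eqref{cnewfirstequation} would force every $x^i=0$ and \eqref{cnewsecondequation} together with the invertibility of $A_G-\lambda I$ would force $y=0$, contradicting the normalization \eqref{cfourthequation}. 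Hence $\mu\neq0$, the bracketed factor vanishes, and this is exactly \eqref{rnew3equation}.

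For the converse I would assume \eqref{rnew3equation} and simply build a solution: set $x^i:=-\tfrac{\mu}{2}\tfrac{1}{\lambda+m_i}e$ for $i\in[k]$ and $y:=\tfrac{\mu}{2}P(\lambda)(A_G-\lambda I)^{-1}e$, fixing $\mu$ by the normalization \eqref{cfourthequation}. Equation \eqref{cfirstequation} holds because $Je=m_ie$ in the $i$-th block, so $(J+\lambda I)x^i=-\tfrac{\mu}{2}e$; and \eqref{cthirdequation} is precisely the vanishing guaranteed by \eqref{rnew3equation} via the bookkeeping identity. To recover \eqref{csecondequation} one writes $(A_G-J-\lambda I)y=(A_G-\lambda I)y-\langle e,y\rangle e$ and observes that by construction $(A_G-\lambda I)y=\tfrac{\mu}{2}P(\lambda)e$, while \eqref{rnew3equation} forces $\langle e,y\rangle=\tfrac{\mu}{2}(P(\lambda)-1)$, so the difference collapses to $\tfrac{\mu}{2}e$. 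A nonzero $\mu$ satisfying \eqref{cfourthequation} exists because the coefficient of $(\mu/2)^2$ in $\sum_i\langle x^i,x^i\rangle+\langle y,y\rangle$, namely $\sum_{p=1}^q\tfrac{a_pm_{i_p}}{(\lambda+m_{i_p})^2}+P(\lambda)^2\|(A_G-\lambda I)^{-1}e\|_2^2$, is strictly positive.

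Once the bookkeeping identity $\sum_i\langle e,x^i\rangle=-\tfrac{\mu}{2}(P(\lambda)-1)$ is recorded, the computation is routine. The only step demanding genuine care is excluding $\mu=0$ in the forward direction, and this is exactly where the hypothesis $\lambda\notin\sigma(A_G)$ (rather than merely $\lambda\notin\sigma_0(A_G)$, as in Lemma~\ref{fourthlemma}) is indispensable: invertibility of $A_G-\lambda I$ is what forces $y=0$ and produces the contradiction.
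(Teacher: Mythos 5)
Your proof is correct and follows essentially the same route as the paper: reduce via the explicit formulas \eqref{cnewfirstequation} and \eqref{inversesecondequation}, substitute into \eqref{cthirdequation} to get $\tfrac{\mu}{2}\bigl(P(\lambda)\langle e,(A_G-\lambda I)^{-1}e\rangle-(P(\lambda)-1)\bigr)=0$, rule out $\mu=0$ by the normalization, and reverse the construction for the converse. Your write-up is somewhat more explicit than the paper's (you verify \eqref{csecondequation} and the positivity of the coefficient of $(\mu/2)^2$ in detail, which the paper leaves to the reader), but there is no difference in substance.
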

\begin{proof}
	Suppose $\lambda$ is in the solutions of \eqref{cfirstequation}-\eqref{cfourthequation}. Then, there exist $(x^1,\ldots ,x^k,y,\lambda ,\mu)$ that satisfy \eqref{cnewfirstequation}, \eqref{cnewsecondequation}, \eqref{cthirdequation} and \eqref{cfourthequation}. From \eqref{cnewsecondequation}, we obtain
	\begin{equation}\label{inversesecondequation}
		y=P(\lambda)\frac{\mu}{2}(A_G-\lambda I)^{-1}e.
	\end{equation}
	Applying \eqref{cnewfirstequation}, \eqref{inversesecondequation} in \eqref{cthirdequation}, we have
	\[ \frac{\mu}{2}\left(P(\lambda) \langle e,(A_G-\lambda I)^{-1}e \rangle-(P(\lambda)-1) \right)=0.\]
	If $\mu =0$, then from \eqref{cnewfirstequation}, $x^i=0$ for all $i\in [k]$ and from \eqref{inversesecondequation}, $y=0$ which contradicts \eqref{cfourthequation}. Thus $\mu \neq 0$ and $\lambda $ satisfies \eqref{rnew3equation}.
	
	To prove the converse, assume that $\lambda$ satisfies \eqref{rnew3equation}. Take $x^i$ for $i\in[k]$ and $y$ as in \eqref{cnewfirstequation} and \eqref{inversesecondequation} respectively, and $\mu$ so that equation \eqref{cfourthequation} holds. Then $(x^1,\ldots ,x^k,y,\lambda ,\mu)$ is a solution of \eqref{cfirstequation}-\eqref{cfourthequation}.
\end{proof}

Using the above four lemmas, one can conclude that \[\Lambda (K_{m_1,m_2,\ldots ,m_k}+G)\cap(-\infty,-1)=\bigcup\limits_{i=1}^4(\Lambda_i (K_{m_1,m_2,\ldots ,m_k}+G)\cap (-\infty , -1))\cup \{-m_{i_p}: p\in[q], a_p\geq 2, m_{i_p}\neq 1\}.\]
This completes the proof.
\end{proof}
As an immediate consequence of Theorem \ref{theorem2}, we now derive QEC for several classes of graphs.
\begin{ex}\label{example5}
Let $m_1=2,m_2=1$ and $G=P_3$. Then $\sigma (A_{P_3})=\{0,\pm \sqrt{2}\},\ \sigma (A_{P_3}-J)=\{0,-1,-2\}$ and $\sigma_0 (A_{P_3})=\{0\}$. The function $P(\lambda)=1+\frac{1}{\lambda +1}+\frac{2}{\lambda +2}=\frac{\lambda ^2+6\lambda +6}{(\lambda +1)(\lambda +2)}$ has two zeros $\lambda =-3\pm \sqrt{3}$. Recall that $\langle e,(A_{P_3}-\lambda I)^{-1}e \rangle =-\frac{3\lambda +4}{\lambda ^2-2}$. So, the equation $P(\lambda)\langle e,(A_{P_3}-\lambda I)^{-1}e \rangle=P(\lambda)-1$ equivalent to the equation $3\lambda ^3+13\lambda ^2+18\lambda +8=0$ with three roots $-1,-\frac{4}{3},-2$. Thus
\begin{equation*}
    \Lambda _1 (K_{1,2}+P_3)=\{-1,-2\},\ \Lambda _2 (K_{1,2}+P_3)=\emptyset,\ \Lambda _3 (K_{1,2}+P_3)=\{0\},\ \Lambda _4 (K_{1,3}+P_3)=\{-\frac{4}{3}\}. 
\end{equation*}
By Theorem \ref{theorem2},
\begin{equation*}
\qec(K_{1,2}+P_3)=-2-\min \{-\frac{4}{3},-2\}=0.
\end{equation*}
\end{ex}
\begin{ex}\label{example9}
	Let $m_1=1,m_2=1$ and $G=K_{1,9}$. Then $\sigma(A_{K_{1,9}})=\{0,3,-3\},\ \sigma(A_{K_{1,9}}-J)=\{0,-1,-9\}$, $\sigma _0(A_{K_{1,9}})=\{0,-3\}$, and $P(\lambda)=\frac{\lambda +3}{\lambda +1}$. From equation \eqref{equation6}, we know that $\langle e,(A_{K_{1,9}}-\lambda I)^{-1}e \rangle =\frac{10\lambda +18}{9-\lambda ^2}$. So, the equation in $\Lambda_4 (K_{1,1}+K_{1,9})$ gives us \[\Big (\frac{\lambda +3}{\lambda +1}\Big )\Big (\frac{10\lambda +18}{9-\lambda ^2} \Big )=\frac{2}{\lambda +1} \quad \Rightarrow \lambda =-1.\]
	This gives us
	\begin{equation*}
		\Lambda_1 (K_{1,1}+K_{1,9})= \{-1\} ,\ \Lambda_2 (K_{1,1}+K_{1,9})= \{-3\} ,\ \Lambda_3 (K_{1,1}+K_{1,9})=\{0\},\ \Lambda _4(K_{1,1}+K_{1,9})=\emptyset.
	\end{equation*}
	By Theorem \ref{theorem2},
	\begin{equation*}
		\qec(K_{1,1}+K_{1,9})=-2-\min \{-3\}= 1.
	\end{equation*}
\end{ex}
\begin{ex}\label{example7}
	Let $m_1=3,m_2=3,m_3=1$ and $G=P_3$. Then $\sigma (A_{P_3})=\{0,\pm \sqrt{2}\},\ \sigma (A_{P_3}-J)=\{0,-1,-2\}$, $\sigma_0 (A_{P_3})=\{0\}$ and $P(\lambda)=\frac{\lambda ^2+11\lambda +12}{(\lambda +1)(\lambda +3)}$ has zeross $\frac{-11\pm \sqrt{73}}{2}$. The equation $P(\lambda)\langle e,(A_{P_3}-\lambda I)^{-1}e \rangle=P(\lambda)-1$ reduces to the equation $5\lambda ^3+23\lambda ^2+33\lambda +15=0$ with the roots $-1,\frac{-9\pm \sqrt{6}}{5}$. So,
	\begin{equation*}
		\Lambda _1 (K_{1,3,3}+P_3)=\{-1\} ,\ \Lambda _2 (K_{1,3,3}+P_3)=\emptyset ,\ \Lambda_3 (K_{1,3,3}+P_3)= \{0\},\ \Lambda_4 (K_{1,3,3}+P_3)=\{\frac{-9\pm \sqrt{6}}{5}\}.
	\end{equation*}
	By Theorem \ref{theorem2},
	\begin{equation*}
		\qec(K_{1,3,3}+P_3)=-2-\min \{\frac{-9\pm \sqrt{6}}{5},-3\}=1.
	\end{equation*}
\end{ex}
\begin{rem}
	 Since $K_{m_1,m_2,\ldots,m_k} + K_{n_1,n_2,\ldots,n_l} = K_{m_1,m_2,\ldots,m_k,n_1,n_2,\ldots,n_l}$, the formulas in Examples \ref{example5}, \ref{example9}, and \ref{example7} can also be verified using \cite[Theorem 1.1]{bata}.	
\end{rem}
\begin{ex}
Let $m_1=4,m_2=1$ and $G=C_5$. Then $\sigma (A_{C_5})=\{2,\frac{\pm \sqrt{5}-1}{2}\},\ \sigma (A_{C_5}-J)=\{-3,\frac{\pm \sqrt{5}-1}{2}\}$ and $\sigma_0 (A_{C_5})=\{\frac{\pm \sqrt{5}-1}{2}\}$. Now, $P(\lambda)=1+\frac{1}{\lambda +1}+\frac{4}{\lambda +4}=\frac{\lambda ^2+10\lambda +12}{(\lambda +1)(\lambda +4)}$ has zeros $\lambda =-5\pm \sqrt{13}$. We have $\langle e,(A_{C_5}-\lambda I)^{-1}e \rangle =-\frac{5}{\lambda -2}$ and the equation $P(\lambda)\langle e,(A_{C_5}-\lambda I)^{-1}e \rangle=P(\lambda)-1$ has solutions $\frac{-12\pm \sqrt{34}}{5}$. So,
\begin{equation*}
   \Lambda _1 (K_{1,4}+C_5)=\emptyset ,\ \Lambda _2 (K_{1,4}+C_5)=\emptyset,\ \Lambda _3 (K_{1,4}+C_5)=\{\frac{\pm \sqrt{5}-1}{2}\},\ \Lambda _4 (K_{1,4}+C_5)=\{\frac{-12\pm \sqrt{34}}{5}\}. 
\end{equation*}
By Theorem \ref{theorem2},
\begin{equation*}
\qec(K_{1,4}+C_5)=-2-\min \{\frac{- \sqrt{5}-1}{2},\frac{-12\pm \sqrt{34}}{5}\}=\frac{2+ \sqrt{34}}{5}.
\end{equation*}
\end{ex}
\begin{ex}\label{example8}
Let $m_1=1,m_2=1$ and $G=P_4$. Recall that $\sigma(A_{P_4})=\{\frac{\pm \sqrt{5}-1}{2}, \frac{\pm \sqrt{5}+1}{2}\},\ \sigma(A_{P_4}-J)=\{\frac{\pm \sqrt{5}-3}{2}, \frac{\pm \sqrt{5}-1}{2}\}$ and $\sigma _0(A_{P_4})=\{\frac{\pm \sqrt{5}-1}{2}\}$. The function $P(\lambda)=\frac{\lambda +3}{\lambda +1}$ has  a zero $\lambda =-3$. Now, recall that $\langle e,(A_{P_4}-\lambda I)^{-1}e \rangle =-\frac{4\lambda +2}{\lambda ^2-\lambda -1}$. So, the equation in $\Lambda_4 (K_{1,1}+P_4)$, that is, $3\lambda ^2+6\lambda +2=0$ has roots $\frac{-3\pm \sqrt{3}}{3}$. Thus
\begin{equation*}
    \Lambda_1 (K_{1,1}+P_4)= \emptyset ,\ \Lambda_2 (K_{1,1}+P_4)= \emptyset, \ \Lambda_3 (K_{1,1}+P_4)=\{\frac{\pm \sqrt{5}-1}{2}\}, \Lambda_4 (K_{1,1}+P_4)= \{\frac{-3\pm \sqrt{3}}{3}\}.
\end{equation*}
By Theorem \ref{theorem2},
\begin{equation*}
\qec(K_{1,1}+P_4)=-2-\min \{\frac{-1- \sqrt{5}}{2},\frac{-3\pm \sqrt{3}}{3}\}= \frac{-3+\sqrt{5}}{2}
\end{equation*}
and  $K_{1,1}+P_4$ is of QE class.
\end{ex}
The above examples demonstrate that each $\Lambda_i$ plays a crucial role in computing the quadratic embedding constant in Theorem \ref{theorem2}. On the other hand, Example \ref{example7} highlights the necessity of considering the set $\{-m_{i_p} : p \in [q],\ a_p \geq 2,\ m_{i_p} \neq 1\}$. Now, we study the quadratic embedding constants of join graphs $K_{m,1}+\tilde{G}$, where $\tilde{G}$ is either a friendship graph $F_n=K_1+nK_2$ or a wheel graph $W_n=K_1+C_n$.
\begin{ex}
Let $m_1=m,m_2=1,m_3=1$ and $G=nK_2$. Then $K_{m,1,1}+nK_2=K_{m,1}+F_n$. Now, $\sigma (A_{nK_2})=\{-1,1\},\ \sigma (A_{nK_2}-J)=\{-1,1-2n,1\}$ and $\sigma_0 (A_{nK_2})=\{-1\}$. So,
\begin{eqnarray*}
	\Lambda _1 (K_{m,1,1}+nK_2)\cap (-\infty ,-1)&=& \begin{cases}
\{1-2n\} , & \mbox{if }m\neq 1, m=2n-1 \\
\emptyset , & \mbox{Otherwise.}
\end{cases} \\
	\Lambda _2 (K_{m,1,1}+nK_2)\cap (-\infty ,-1)&=&\emptyset ,\\
    \Lambda _3 (K_{m,1,1}+nK_2)\cap (-\infty ,-1)&=&\emptyset .
\end{eqnarray*}
Now, \[P(\lambda)= \frac{\lambda ^2+(2m+3)\lambda +4m}{(\lambda +m)(\lambda +1)},\ P(\lambda)-1=\frac{(m+2)\lambda +3m}{(\lambda +m)(\lambda +1)}\]
and $\langle e,(A_{nK_2}-\lambda I)^{-1}e \rangle =-\frac{2n}{\lambda -1}$. Then $P(\lambda)\langle e,(A_{nK_2}-\lambda I)^{-1}e \rangle =P(\lambda )-1$ gives us
\begin{equation}\label{equation9}
    (m+2+2n)\lambda ^2+2\lambda (m-1+2mn+3n)+m(8n-3).
\end{equation}
The roots of the equation \eqref{equation9} are
\begin{equation*}
     \lambda _{\pm}=\frac{-(m-1+2mn+3n)\pm \sqrt{(m-1+2mn+3n)^2-m(8n-3)(m+2+2n)}}{(m+2+2n)}.
\end{equation*}
It is clear that the roots of $P(\lambda)=0$ will never be the roots of \eqref{equation9}. And $-m$ will be equal to $\lambda _-$ or $\lambda _+$ if and only if
\begin{equation*}
m^2-2nm+(2n-1)=0.
\end{equation*}
So, 
\begin{equation*}
-m=\begin{cases}
\lambda _-, & \mbox{only if }m=2n-1, m\neq 1 \\
\lambda _+, & \mbox{only if }m=1. \\
\end{cases}
\end{equation*}
By a simple calculation, one can verify that $\lambda _+\leq -1$, and equality holds if and only if either $m=1$ or $n=1$. Therefore,
\begin{equation*}
    \Lambda _4 (K_{m,1,1}+nK_2)\cap (-\infty ,-1)=\begin{cases}
\{\lambda _-\}, & \mbox{if }m=1, \\
\{\lambda _+\}, & \mbox{if }m\neq 1,m= 2n-1, \\
\{\lambda _\pm \} ,& \mbox{Otherwise.}
\end{cases}
\end{equation*}
By Theorem \ref{theorem2},
\begin{equation*}
\qec(K_{m,1,1}+nK_2)=\begin{cases}
-2-\min \{\lambda _-\}, & \mbox{if }m= 1, \\
-2-\min \{1-2n (\lambda _-),\lambda _+\}, & \mbox{if }m\neq 1, m= 2n-1, \\
-2-\min \{\lambda _{\pm }\}, & \mbox{Otherwise}.
\end{cases}
\end{equation*}
Thus,
\begin{equation*}
    \qec(K_{m,1,1}+nK_2)= \frac{2mn-n-m-5+ \sqrt{(m-1+2mn+3n)^2-m(8n-3)(m+2+2n)}}{(m+2+2n)}.
\end{equation*}
\end{ex}
\begin{rem}
It follows from the above formula that $K_{m,1} + F_n$ belongs to the QE-class if and only if $(m,n) \in \{(1,1), (2,1), (1,2)\}$.
\end{rem}
\begin{ex}\label{example11}
Let $m_1=m\geq 2,m_2=1,m_3=1$ and $G=C_n,n\geq 4$. Then $K_{m,1,1}+C_n=K_{m,1}+W_n$. Now, $\sigma (A_{C_m})=\{2\cos \frac{2\pi k}{m}; k=1,2,\ldots ,m\},\ \sigma _0 (A_{C_m})=\sigma (A_{C_m})\backslash \{2\}$. Also, if $\alpha \in \sigma (A_{C_n}-J)$, then either $\alpha =-(n-2)$ or $| \alpha |\leq 2$. In addition, $-2\in \sigma (A_{C_n}-J)$, only if $n$ is even. 
\begin{eqnarray*}
	\Lambda _1 (K_{m,1,1}+C_n)\cap (-\infty ,-1)&=& \begin{cases}
\{-2\} , & \mbox{if }m=2, n\mbox{ is even} \\
\{-m\}  , &\mbox{if }m\neq 2,n=m+2, \\
\emptyset  , &\mbox{Otherwise.}
\end{cases}
\end{eqnarray*}
It is clear that if $\lambda \in \Lambda _2 (K_{m,1,1}+C_n)\cap (-\infty ,-1)$, or $\lambda \in \Lambda _3 (K_{m,1,1}+C_n)\cap (-\infty ,-1)$, in both the case $|\lambda |\geq -2$. Now, $\langle e,(A_{nK_2}-\lambda I)^{-1}e \rangle =-\frac{n}{\lambda -2}$. Then $P(\lambda)\langle e,(A_{nK_2}-\lambda I)^{-1}e \rangle =P(\lambda )-1$ reduces to
\begin{equation}\label{equation10}
    (m+n+2)\lambda ^2+\lambda (m+3n+2mn-4)+2m(2n-3).
\end{equation}
The roots of the equation \eqref{equation9} are
\begin{equation*}
     \lambda _{\pm}=\frac{-(m+3n+2mn-4)\pm \sqrt{(m+3n+2mn-4)^2-8m(2n-3)(m+n+2)}}{2(m+n+2)}.
\end{equation*}
By a simple calculation, one can verify that $\lambda _-\leq -2\ \forall \ m\geq 2,n\geq 4$ and the equality holds if and only if $m=2,n=4$. And $\lambda _-=-m$ if and only if $n=m+2$. So, $\lambda _-\in \Lambda _4 (K_{m,1,1}+C_n)\cap (-\infty ,-1)$ only if $n\neq m+2$. Now,
\begin{equation*}
\min \Big (\bigcup\limits_{i=1}^4(\Lambda_i (K_{m,1,1}+C_n)\cap (-\infty , -1)) \Big )=\begin{cases}
-2, & \mbox{if }m= 2,n=4, \\
-m, & \mbox{if }m\neq 2,n=m+2, \\
\lambda _-, & \mbox{Otherwise.}
\end{cases}
\end{equation*}
By Theorem \ref{theorem2},
\begin{equation*}
    \qec (K_{m,1,1}+C_n)=\begin{cases}
m-2, & \mbox{if }n=m+2, \\
\frac{(2mn-3m-n-12)+\sqrt{(m+3n+2nm-4)^2-8m(2n-3)(n+m+2)}}{2(m+n+2)}, & \mbox{Otherwise.}
\end{cases}
\end{equation*}
\end{ex}
\begin{rem}
From the above formula, it follows that $K_{m,1}+W_n$ is of QE-class if and only if $m=2,n=4$.
\end{rem}
\begin{rem}
In example \ref{example11}, when $m=1$, the formula for $\qec (K_{1,1,1}+C_n)=\qec (K_3+C_n)$ can be obtained from \cite[Example 3.6]{lou}. In addition, when $n=3$, the formula for $\qec (K_{m,1,1}+C_3)=\qec (K_{m,1,1,1,1,1})$ can be obtained from \cite[Theorem 1.1]{bata}.
\end{rem}
\section{Quadratic embedding constant and Cartesian Product of Graphs}\label{cartesianproductsection}
In this section, we first prove Proposition \ref{qecrelation}. We then prove Theorems \ref{theorem4}  and \ref{theorem5} -- for a arbitrary connected graph $G$, we derive a formula for the quadratic embedding constant of the Cartesian product of a complete graph $K_m$ and $G$  as well as QEC of the Cartesian product of a complete bipartite graph $K_{m,n}$ and $G$ in terms of $\qec (G)$. We first recall the definition of the Cartesian product of two graphs.

\begin{defn}\label{cproduct}
Let $G_1=(V_1, E_1)$ and $G_2=(V_2, E_2)$ be two connected graphs. The Cartesian of graphs $G_1$ and $G_2$, denoted by $G_1\times G_2$, is a graph with vertex set $V=V_1\times V_2$ and two vertices $(u_i,v_k),(u_j,v_l)\in V$ are adjacent in $G_1\times G_2$ if and only if either $u_i\sim u_j$ and $v_k=v_l$, or $u_i= u_j$ and $v_k\sim v_l$.
\end{defn}
\begin{ex}
Example of the Cartesian product of $P_3$ and $C_3$:
\begin{figure}[H]
\begin{center}
\begin{tikzpicture}[scale=1]
\draw  (1,0)-- (2,0);
\draw  (2,0)-- (3,0);
\begin{scriptsize}
\fill (1,0) circle (2.5pt);
\draw (1.0,-0.4) node {$u_1$};
\fill (2,0) circle (2.5pt);
\draw (2.,-0.4) node {$u_2$};
\fill (3,0) circle (2.5pt);
\draw (3.,-0.4) node {$u_3$};
\end{scriptsize}
\end{tikzpicture}
\hspace{1cm}
\begin{tikzpicture}[scale=1]
\draw  (1,1)-- (2,2);
\draw  (2,2)-- (3,1);
\draw  (3,1)-- (1,1);
\begin{scriptsize}
\fill (2.,2.) circle (2.5pt);
\draw (2.,2.35) node {$v_1$};
\fill (1.,1.) circle (2.5pt);
\draw (1.,0.65) node {$v_2$};
\fill (3.,1.) circle (2.5pt);
\draw (3.,0.65) node {$v_3$};
\end{scriptsize}
\end{tikzpicture}
\hspace{1cm}
\begin{tikzpicture}[scale=1]
\draw  (2.5,3.6)-- (3.62,2.98);
\draw  (2.5,3.6)-- (4.,2.);
\draw  (2.5,3.6)-- (4.,1.);
\draw  (3.62,2.98)-- (4.,2.);
\draw  (3.62,2.98)-- (3.,0.);
\draw  (4.,2.)-- (2.,0.);
\draw  (4.,1.)-- (3.,0.);
\draw  (4.,1.)-- (2.,0.);
\draw  (1.,1.)-- (4.,1.);
\draw  (3.,0.)-- (2.,0.);
\draw  (3.,0.)-- (1.,2.);
\draw  (2.,0.)-- (1.42,2.98);
\draw  (1.,1.)-- (1.,2.);
\draw  (1.42,2.98)-- (1.,1.);
\draw  (1.42,2.98)-- (1.,2.);
\begin{scriptsize}
\fill (2.,0.) circle (2.5pt);
\draw (1.85,-0.25) node {$(u_2,v_3)$};
\fill (3.,0.) circle (2.5pt);
\draw (3.15,-0.25) node {$(u_2,v_2)$};
\fill (4.,1.) circle (2.5pt);
\draw (4.8,1.) node {$(u_2,v_1)$};
\fill (1.,1.) circle (2.5pt);
\draw (0.2,1.) node {$(u_3,v_1)$};
\fill (1.,2.) circle (2.5pt);
\draw (0.2,2.) node {$(u_3,v_2)$};
\fill (4.,2.) circle (2.5pt);
\draw (4.8,2.) node {$(u_1,v_3)$};
\fill (1.42,2.98) circle (2.5pt);
\draw (0.6,3.) node {$(u_3,v_3)$};
\fill (3.62,2.98) circle (2.5pt);
\draw (4.4,3.) node {$(u_1,v_2)$};
\fill (2.5,3.6) circle (2.5pt);
\draw (2.52,3.97) node {$(u_1,v_1)$};
\end{scriptsize}
\end{tikzpicture}
\end{center}
\caption{$P_3$, $C_3$ and $P_3\times C_3$ (Left to right)}
\label{figure2}
\end{figure}
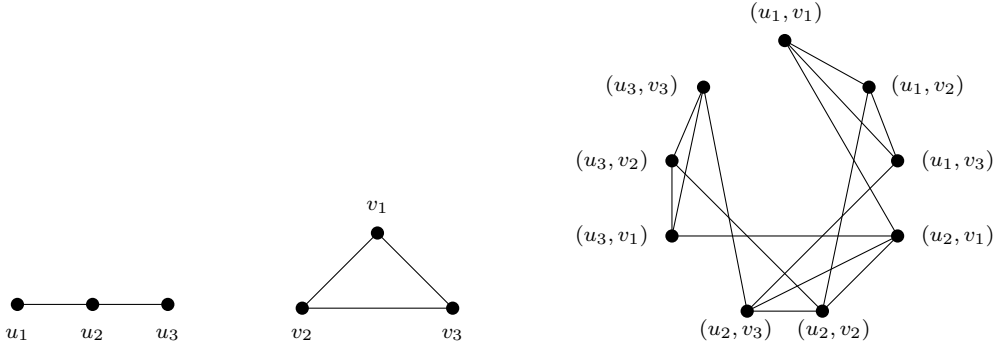
\end{ex}
Let $G_1$ and $G_2$ be two connected graphs with vertex set $V_1=\{u_1,u_2,\ldots ,u_m\}$ and $V_2=\{v_1,v_2,\ldots ,v_n\}$, respectively. Suppose that $M_k=\{(u_k,v_1),(u_k,v_2),\ldots ,(u_k,v_n)\}$ for all $k\in [m]$. Then the vertex set of $G_1\times G_2$ is $\cup _{k=1}^mM_k$. Then the distance matrix $D_{G_1\times G_2}$ is an $m\times m$ block matrix whose each block is of order $n$. Let $(u_i,v_p)$ and $(u_j,v_q)$ be two arbitrary elements from $M_i$ and $M_j$, respectively. Let $u_i\sim u_{i_1}\sim u_{i_2}\sim \cdots \sim u_{i_l}\sim u_j$ and $v_p\sim v_{p_1}\sim v_{p_2}\sim \cdots \sim v_{p_r}\sim v_q$ be two paths in $G_1$ and $G_2$, respectively. Then
\begin{center}
$(u_i,v_p)\sim (u_{i_1},v_p)\sim \cdots \sim (u_{i_l},v_p)\sim (u_j,v_p)\sim (u_j,v_{p_1})\sim \cdots \sim (u_j,v_{p_r})\sim (u_j,v_q)$
\end{center}
is a path from $(u_i,v_p)$ to $(u_j,v_q)$ in $G_1\times G_2$ and thus $G_1\times G_2$ is also a connected graph. Also note that any path between $(u_i,v_p)$ and $(u_j,v_q)$ in $G_1\times G_2$ gives us a path between $u_i$ and $u_j$ in $G_1$, and a path between $v_p$ and $v_q$ in $G_2$.  This implies
\begin{equation}\label{distancerelation}
 d_{G_1\times G_2}((u_i,v_p),(u_j,v_q))=d_{G_1}(u_i,u_j)+d_{G_2}(v_p,v_q).       
\end{equation}
Thus the $ij$-th block of $D_{G_1\times G_2}$ formed by rows $M_i$ and columns $M_j$ is $d_{G_1}(u_i,u_j)\ J+D_{G_2}$, and
\[
    D_{G_1\times G_2}=\begin{pmatrix}
D_{G_2} & D_{G_2}+d_{G_1}(u_1,u_2)J & \hdots & D_{G_2}+d_{G_1}(u_1,u_m)J \\
D_{G_2}+d_{G_1}(u_2,u_1)J & D_{G_2} & \hdots & D_{G_2}+d_{G_1}(u_2,u_m)J \\
\vdots & \vdots & \ddots & \vdots \\
D_{G_2}+d_{G_1}(u_m,u_1)J & D_{G_2}+d_{G_1}(u_m,u_2)J &\hdots  & D_{G_2}\\
\end{pmatrix}.
\]
We now recall an interesting result that gives us the  quadratic embedding constant of the Cartesian product of two arbitrary graphs of QE class. 
\begin{thm}\cite[Theorem 3.3]{zakiyyah}\label{0qec}
	Let $G_1$ and $G_2$ be two non-trivial graphs of QE class. Then, $\qec (G_1\times G_2)=0$.
\end{thm}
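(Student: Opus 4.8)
The plan is to establish the two inequalities $\qec(G_1\times G_2)\le 0$ and $\qec(G_1\times G_2)\ge 0$ separately. The common starting point is the observation, immediate from \eqref{distancerelation}, that the distance matrix factors as a sum of Kronecker products,
\[
D_{G_1\times G_2}=D_{G_1}\otimes J_n+J_m\otimes D_{G_2},
\]
where $m=|V_1|$, $n=|V_2|$ and $J_m,J_n$ are the all-ones matrices of the indicated sizes (the $((u_k,v_l),(u_{k'},v_{l'}))$ entry of the right side is $d_{G_1}(u_k,u_{k'})+d_{G_2}(v_l,v_{l'})$, matching the left). Writing $f\in C(V)\cong\mathbb{R}^{mn}$ as $f=(f_1,\dots,f_m)$ with $f_k\in\mathbb{R}^n$ indexed over $M_k$, and setting $s:=(\langle e_n,f_1\rangle,\dots,\langle e_n,f_m\rangle)\in\mathbb{R}^m$ and $t:=\sum_{k=1}^m f_k\in\mathbb{R}^n$ (here $e_m,e_n$ denote all-ones vectors), a direct expansion yields the clean splitting
\[
\langle f,D_{G_1\times G_2}f\rangle=\langle s,D_{G_1}s\rangle+\langle t,D_{G_2}t\rangle.
\]
The crucial point is how the constraint interacts with this: since $\langle e,f\rangle=\sum_k\langle e_n,f_k\rangle=\langle e_m,s\rangle$ and equally $\langle e,f\rangle=\langle e_n,t\rangle$, the single condition $\langle e,f\rangle=0$ forces both $s\perp e_m$ and $t\perp e_n$ at once.

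For the upper bound, I would take any $f$ with $\langle e,f\rangle=0$. Then $s$ and $t$ are admissible test vectors for $D_{G_1}$ and $D_{G_2}$ respectively, so conditional negative definiteness of both distance matrices (guaranteed by Schoenberg's criterion, as $G_1,G_2$ are of QE class) gives $\langle s,D_{G_1}s\rangle\le 0$ and $\langle t,D_{G_2}t\rangle\le 0$. Hence $\langle f,D_{G_1\times G_2}f\rangle\le 0$, so $D_{G_1\times G_2}$ is conditionally negative definite and $\qec(G_1\times G_2)\le 0$.

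For the lower bound, I would exhibit a normalized admissible $f$ on which the quadratic form vanishes. Since $G_1,G_2$ are non-trivial we have $m,n\ge 2$; choose any nonzero $w\in\mathbb{R}^n$ with $\langle e_n,w\rangle=0$ and set $f_1=w$, $f_2=-w$, and $f_k=0$ for $k\ge 3$. Then each $\langle e_n,f_k\rangle=0$, so $s=0$, while $t=\sum_k f_k=0$; consequently $\langle e,f\rangle=0$ and, by the splitting identity, $\langle f,D_{G_1\times G_2}f\rangle=0$. After rescaling $f$ to unit length this shows $\qec(G_1\times G_2)\ge 0$, and combining with the upper bound gives $\qec(G_1\times G_2)=0$.

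The main obstacle is really a matter of insight rather than technical effort: recognizing the Kronecker decomposition of $D_{G_1\times G_2}$ and verifying that the global mean-zero constraint decouples into independent mean-zero constraints on $s$ and $t$. Once this is in place, both bounds are short. The one genuine subtlety is in the lower bound, where non-triviality of \emph{both} factors is essential, as it is exactly what produces a nonzero admissible $f$ with $s=t=0$; if, say, $m=1$ then $G_1\times G_2=G_2$ and the conclusion need not hold.
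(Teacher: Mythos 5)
Your proof is correct. The paper states this theorem as an imported result from \cite{zakiyyah} and gives no proof of its own, but your argument --- the Kronecker decomposition $D_{G_1\times G_2}=D_{G_1}\otimes J_n+J_m\otimes D_{G_2}$, the resulting splitting $\langle f,D_{G_1\times G_2}f\rangle=\langle s,D_{G_1}s\rangle+\langle t,D_{G_2}t\rangle$ with conditional negative definiteness giving the upper bound, and the test vector $f=(w,-w,0,\dots,0)$ with $w\perp e_n$ (using non-triviality of both factors) giving the lower bound --- is sound and is essentially the standard proof found in that reference.
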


We now prove Proposition \ref{qecrelation} -- we derive a lower bound of $\qec (G_1\times G_2)$ in terms of $\qec (G_1)$ and $\qec (G_2)$.
\begin{proof}[Proof of Proposition~\ref{qecrelation}]
	Let $x\in \mathbb{R}^n$ such that $\langle e,x\rangle =0$, $\langle x,x\rangle =1$ and $\qec (G_2)=x^TD_{G_2}x$. Define $y\in \mathbb{R}^{mn}$ such that $y:=\frac{1}{\sqrt{m}}(\underbrace{x^T,x^T,\ldots ,x^T}_{m\ times})^T$. Then $\langle e,y\rangle =0$ and $\langle y,y\rangle =1$. Since $Jx=0$, we have
	\begin{equation*}
		y^TD_{G_1\times G_2}y=\frac{1}{m}(x^T,x^T,\ldots ,x^T)\begin{pmatrix}
			mD_{G_2}x \\
			mD_{G_2}x \\
			\vdots \\
			mD_{G_2}x
		\end{pmatrix}=mx^TD_{G_2}x=m\qec (G_2).
	\end{equation*}
	Thus $m\qec (G_2)\leq \qec (G_1\times G_2)$. Also, by interchanging the role of $G_1$ and $G_2$, we can show that $n\qec (G_1)\leq \qec (G_1\times G_2)$. Thus, $\max \{n\qec (G_1),m \qec (G_2)\}\leq\qec (G_1\times G_2)$.
\end{proof}

\begin{rem}
	Note that the equality in \eqref{extremalinequality} does not always hold. For that, take $G_1=K_2$ and $G_2=K_{1,2}$. Then $\max \{3(-1),2(-\frac{2}{3})=-\frac{4}{3}$. But $\qec (K_2\times K_{1,2})$ is equal to zero by Theorem \ref{0qec}.
\end{rem}


As an immediate consequence of Proposition \ref{qecrelation}, we conclude the following.
\begin{cor}\label{qeccatprodiff}
Let $G_1$ and $G_2$ be two connected graphs. Then $G_1\times G_2$ is of QE class if and only if both $G_1$ and $G_2$ are of QE class.
\end{cor}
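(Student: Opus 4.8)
The plan is to deduce both directions of the equivalence from Schoenberg's characterization recalled in the introduction, namely that a connected graph $H$ is of QE class precisely when $\qec(H)\le 0$. With this reformulation, the corollary becomes a statement purely about the sign of quadratic embedding constants, and each implication follows from exactly one of the two preceding results.

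For the \emph{only if} direction, I would assume that $G_1\times G_2$ is of QE class, so that $\qec(G_1\times G_2)\le 0$. Proposition \ref{qecrelation} then gives $\qec(G_1)\le \qec(G_1\times G_2)\le 0$ and, symmetrically, $\qec(G_2)\le \qec(G_1\times G_2)\le 0$; hence both factors are of QE class.

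For the \emph{if} direction, I would assume that $G_1$ and $G_2$ are both of QE class. When both are non-trivial, Theorem \ref{0qec} applies directly and yields $\qec(G_1\times G_2)=0\le 0$, so $G_1\times G_2$ is of QE class. I do not anticipate a genuine obstacle, since the result is an immediate synthesis of Proposition \ref{qecrelation} and Theorem \ref{0qec}; the only point deserving a line of care is the degenerate case in which one factor, say $G_1$, is a single vertex. This situation lies outside the non-triviality hypothesis of Theorem \ref{0qec}, but it is disposed of at once by the isomorphism $K_1\times G_2\cong G_2$, which forces $\qec(G_1\times G_2)=\qec(G_2)$ and makes the equivalence trivial there.
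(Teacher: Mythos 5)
Your proposal is correct and follows essentially the same route as the paper: the forward implication via Theorem \ref{0qec} and the converse via Proposition \ref{qecrelation} combined with Schoenberg's sign characterization (the paper phrases the converse as a contradiction rather than a direct chain of inequalities, which is immaterial). Your extra remark handling the trivial factor $K_1\times G_2\cong G_2$, which falls outside the non-triviality hypothesis of Theorem \ref{0qec}, is a small point the paper glosses over and is a welcome addition.
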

\begin{proof}
	If $G_1$ and $G_2$ are of QE class, then by Theorem \ref{0qec}, $G_1\times G_2$ is of QE class.  To prove the converse, let either $G_1$ or $G_2$ be of the non-QE class. By Proposition \ref{qecrelation}, $0<\qec(G_1\times G_2)$, a contradiction since $G_1\times G_2$ is a graph of the QE class. Thus $G_1$ and $G_2$ are of QE class.
\end{proof}
We next study the extremal case of the inequality \eqref{extremalinequality} by giving a explicit formula for the quadratic embedding constant of the Cartesian product graphs $K_m\times G$ and $K_{m,n}\times G$, where $G$ is an arbitrary graph. For a graph $G$ of QE class, by Theorem \ref{0qec},  $\qec (K_m\times G)=0$. We now obtain a formula of $\qec(K_m\times G)$ for an arbitrary connected graph $G$ of non-QE class.
\begin{proof}[Proof of Theorem~\ref{theorem4}]
Let $V$ be the vertex set of $K_m\times G$. Note that
\begin{equation*}\label{distancematrix}
    D_{K_m\times G}=\begin{pmatrix}
D_G & D_G+J & \hdots & D_G+J \\
D_G+J & D_G & \hdots & D_G+J \\
\vdots & \vdots & \ddots & \vdots \\
D_G+J & D_G+J &\hdots  & D_G\\
\end{pmatrix}.
\end{equation*}
Let $\mathcal{S}(D_{K_m\times G})$ be the set of all $(f,\lambda ,\mu)\in (C(V)\cong \mathbb{R}^{mn})\times \mathbb{R} \times \mathbb{R}$ satisfying
\begin{align}
(D_{K_m\times G}-\lambda I)f &=\frac{\mu}{2}\  e \label{mainequation},\\
\langle f,f \rangle &=1 \label{one},\\
\langle e,f \rangle &=0. \label{zero}
\end{align}
By Proposition \ref{qecformula}, $\qec(K_m\times G)=\max \{\lambda :(f,\lambda ,\mu)\in \mathcal{S}(D_{K_m\times G})\}$. Suppose $f=({x^1}^T,{x^2}^T,\ldots ,{x^m}^T)^T$, where $x^i\in \mathbb{R}^n$ for all $i\in [m]$. Then, the equation \eqref{mainequation} gives us a system of $m$ equations. Subtracting the $(i+1)$-th equation from the $i$-th equation, we get
\begin{equation}\label{jequation}
(J+\lambda I)(x^i-x^{i+1})=0, \ \ \forall \ \ i\in [m-1].
\end{equation}
Since $G$ is a connected graph of non-QE class, by Proposition \ref{qecrelation}, it is enough to consider $0<\lambda $. From \eqref{jequation}, we have $x^i=x^{i+1}$ for all $i\in [m-1]$ and equation \eqref{zero} gives us $\langle e,x^1\rangle =0$. Thus the equations \eqref{mainequation}-\eqref{zero} reduces to
\begin{align}
(mD_G-\lambda I)x^1 &=\frac{\mu}{2}\  e \label{mainequation2},\\
m\langle x^1,x^1 \rangle &=1 \label{one2},\\
\langle e,x^1 \rangle &=0. \label{zero2}
\end{align}
Notice that $\qec (G)=\max \{a: (x,a,\eta)\in \mathcal{S}(D_G)\}$, where $\mathcal{S}(D_G)$ is the set of all stationary points $(x,a,\eta)\in \mathbb{R}^n\times \mathbb{R} \times \mathbb{R}$ satisfying
\begin{equation}\label{aequation}
(D_G-aI)x=\frac{\eta}{2}\  e,\ \ \  \langle x,x \rangle =1,\ \ \  \langle e,x \rangle =0.
\end{equation}
From equations \eqref{mainequation2}-\eqref{aequation}, the following relations
\begin{equation*}
\lambda =ma,\ \ \  x^1=\frac{1}{\sqrt{m}}\ x,\ \ \  \mu =\sqrt{m}\  \eta,
\end{equation*}
gives us a one to one correspondence between $\mathcal{S}(D_{K_m\times G})$ and $\mathcal{S}(D_G)$. Thus
\begin{align*}
 \qec (K_m\times G) &=\max \{\lambda :(({x^1}^T,{x^2}^T,\ldots ,{x^m}^T)^T,\lambda ,\mu)\in \mathcal{S}(D_{K_m\times G})\}\\
 &=\max \{ma :(x,a,\eta)\in \mathcal{S}(D_G)\}\\
 &=m\qec (G).\end{align*}\end{proof}
We next derive the quadratic embedding constant of the Cartesian product of $K_{m,n}$ and an arbitrary connected graph $G$. To proceed, we require a preliminary result for the quadratic embedding constant of a complete bipartite graph $K_{m,n}$.
\begin{thm}\label{qeccbip}\cite[Theorem 2.8]{zakiyyah}
	Let $K_{m,n}$ be a complete bipartite graph on $(m+n)$ vertices. Then
	\begin{center}
		$\qec(K_{m,n})=\frac{m(n-2)+n(m-2)}{m+n}$\ \ \  for all \ \ \  $m\geq 1$, $n\geq 1$.
	\end{center}
    Moreover, $K_{m,n}$ is of QE class if and only if one of the following holds:
    \begin{enumerate}
        \item [\rm{(i)}] Either $m=1$ or $n=1$.
        \item [\rm{(ii)}] $m=n=2$.
    \end{enumerate}
\end{thm}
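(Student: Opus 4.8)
The plan is to compute the distance matrix of $K_{m,n}$ explicitly and feed it into the Lagrange-multiplier criterion of Proposition~\ref{qecformula}. Order the vertices so that the first $m$ lie in one part and the remaining $n$ in the other. Since two vertices in the same part are at distance $2$ (joined through any vertex of the opposite part) and two vertices in different parts are at distance $1$, the distance matrix takes the block form
\begin{equation*}
D_{K_{m,n}}=\begin{pmatrix} 2(J-I) & J \\ J & 2(J-I)\end{pmatrix},
\end{equation*}
where the diagonal blocks are $m\times m$ and $n\times n$ respectively. Equivalently $D_{K_{m,n}}=2(J-I)-A_{K_{m,n}}$, so on the hyperplane $\{f:\langle e,f\rangle=0\}$ one has $\langle f,D_{K_{m,n}}f\rangle=-2\langle f,f\rangle-\langle f,A_{K_{m,n}}f\rangle$; either viewpoint (and in particular the join formula of Proposition~\ref{graphjoinqec}, since $K_{m,n}=\overline{K}_m+\overline{K}_n$) reduces the problem to a finite search for stationary values.

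Writing $f=(x^T,y^T)^T$ with $x\in\mathbb{R}^m$, $y\in\mathbb{R}^n$ and setting $s=\langle e,x\rangle$, $t=\langle e,y\rangle$, the stationary-point system $(D_{K_{m,n}}-\lambda I)f=\tfrac{\mu}{2}e$ of Proposition~\ref{qecformula} becomes
\begin{align*}
-(2+\lambda)x&=\Big(\tfrac{\mu}{2}-2s-t\Big)e,\\
-(2+\lambda)y&=\Big(\tfrac{\mu}{2}-s-2t\Big)e,
\end{align*}
together with $s+t=0$ and $\langle x,x\rangle+\langle y,y\rangle=1$. The key structural observation I would use is the dichotomy at $\lambda=-2$. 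If $\lambda=-2$, subtracting the two equations forces $s=t$, hence $s=t=0$ and $\mu=0$, and any unit vector with $x\perp e$, $y\perp e$ solves the system; this contributes the stationary value $\lambda=-2$. If $\lambda\neq-2$, the displayed equations force $x=\alpha e$ and $y=\beta e$ for scalars $\alpha,\beta$, and $s+t=0$ gives $m\alpha+n\beta=0$.

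In the branch $\lambda\neq-2$ I substitute $n\beta=-m\alpha$ back into the two equations to get $(m-2-\lambda)\alpha=(n-2-\lambda)\beta=\tfrac{\mu}{2}$. Since a nonzero $f$ forces $\alpha\neq0$, eliminating $\mu$ and $\beta$ yields $n(m-2-\lambda)=-m(n-2-\lambda)$, i.e.
\begin{equation*}
\lambda=\frac{2(mn-m-n)}{m+n}=\frac{m(n-2)+n(m-2)}{m+n}.
\end{equation*}
Thus the only stationary values are $-2$ and the displayed quantity. Because $\frac{m(n-2)+n(m-2)}{m+n}+2=\frac{2mn}{m+n}>0$, the latter always strictly exceeds $-2$, so Proposition~\ref{qecformula}(i) gives $\qec(K_{m,n})=\frac{m(n-2)+n(m-2)}{m+n}$.

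For the QE-class dichotomy, recall that a connected graph $G$ is of QE class iff $\qec(G)\le0$. Here $\qec(K_{m,n})\le0$ iff $mn-m-n\le0$ iff $(m-1)(n-1)\le1$. For integers $m,n\ge1$ the product $(m-1)(n-1)$ is a nonnegative integer, so the inequality holds iff $(m-1)(n-1)\in\{0,1\}$: the value $0$ corresponds to $m=1$ or $n=1$, and the value $1$ to $m=n=2$, which are exactly cases (i) and (ii). The only place demanding real care is the case split at $\lambda=-2$, where the coefficient $2(J-I)-\lambda I$ degenerates and the eigenvectors are no longer forced to be multiples of $e$; everything else is a short comparison confirming that the nontrivial stationary value, not $-2$, realizes the maximum.
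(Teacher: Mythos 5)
Your proof is correct. Note first that the paper itself offers no proof of this statement: it is imported verbatim from Obata--Zakiyyah \cite[Theorem 2.8]{zakiyyah}, so there is nothing internal to compare against. Your derivation is nevertheless exactly in the spirit of the machinery the paper does use (the Lagrange-multiplier system of Proposition \ref{qecformula}, equivalently the join formula of Proposition \ref{graphjoinqec} applied to $K_{m,n}=\overline{K}_m+\overline{K}_n$), and all the computations check out: the block form of $D_{K_{m,n}}$, the dichotomy at $\lambda=-2$, the forced collinearity $x=\alpha e$, $y=\beta e$ when $\lambda\neq-2$, the resulting unique stationary value $\lambda=\frac{2(mn-m-n)}{m+n}$, its attainability (take $\alpha=\sqrt{n/(m(m+n))}$, $\beta=-m\alpha/n$), and the comparison $\lambda+2=\frac{2mn}{m+n}>0$ showing it beats $-2$. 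The factorization $(m-1)(n-1)\le 1$ for the QE dichotomy is also correct. Two minor points worth tightening: (1) Proposition \ref{qecformula} is stated only for $l\ge 3$, so the case $m=n=1$ (where $K_{1,1}=K_2$) should be disposed of separately via Proposition \ref{qeccomplete}, which gives $\qec(K_2)=-1$ in agreement with the formula; (2) the stationary value $-2$ is actually not attained when $\min(m,n)=1$ and $m+n=2$ (no nonzero vector orthogonal to $e$ in $\mathbb{R}^1$), but since you only need that the other stationary value exists and exceeds $-2$, this does not affect the conclusion.
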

Let $G$ be a connected graph on $l$ vertices and $K_{m,n}$ be a complete bipartite graph on $m+n$ vertices. Without loss of generality, assume that $m\geq n$. For $m=n=1$, $K_{1,1}=K_2$ and by Theorem \ref{theorem4}, $\qec(K_{1,1}\times G)=2\qec(G)$ for any connected graph $G$ of non-QE class. Let $m>1$. Then, all possibilities of $m$ and $n$ can be expressed in three disjoint sets: $(a)\ m> 1, n=1$, $(b)\ m=n=2$ and $(c)\ m>2, n\geq 2$. Note that the distance matrix of $K_{m,n}\times G$ is
\[D_{K_{m,n}\times G}=\begin{pmatrix}
D_G & D_G+2J & \hdots & D_G+2J & D_G+J & D_G+J & \hdots & D_G+J\\
D_G+2J & D_G & \hdots & D_G+2J & D_G+J & D_G+J & \hdots & D_G+J \\
\vdots & \vdots & \ddots & \vdots & \vdots & \vdots & \hdots & \vdots \\
D_G+2J & D_G+2J &\hdots  & D_G & D_G+J & D_G+J & \hdots & D_G+J\\
D_G+J & D_G+J &\hdots  & D_G+J & D_G & D_G+2J & \hdots & D_G+2J\\
D_G+J & D_G+J &\hdots  & D_G+J & D_G+2J & D_G & \hdots & D_G+2J\\
\vdots & \vdots & \hdots & \vdots & \vdots & \vdots & \ddots & \vdots \\
D_G+J & D_G+J &\hdots  & D_G+J & D_G+2J & D_G+2J & \hdots & D_G\\
\end{pmatrix}.
\]
Now, $\qec(K_{m,n}\times G)=\max \{\lambda :(f,\lambda ,\mu)\in \mathcal{S}(D_{K_{m,n}\times G})\}$, where $\mathcal{S}(D_{K_{m,n}\times G})$ is the set of all stationary points $(f,\lambda ,\mu)\in \mathbb{R}^{(m+n)l}\times \mathbb{R} \times \mathbb{R}$ satisfying
\begin{align}
(D_{K_{m,n}\times G}-\lambda I)f &=\frac{\mu}{2}\  e \label{cbmainequation},\\
\langle f,f \rangle &=1 \label{cbone},\\
\langle e,f \rangle &=0. \label{cbzero}
\end{align}
Notice that the equation \eqref{cbmainequation} gives us a system of $m+n$ equations. Suppose that $f=({x^1}^T,{x^2}^T,\ldots ,{x^{m+n}}^T)^T$, where $x^i\in \mathbb{R}^l$ for all $i\in [m+n]$. Then, subtracting the $(i+1)$-th equation from the $i$-th equation, we get the following relations for different cases.\\
{\bf Case I.} For $m=n=2$ and $m>2, n\geq 2$,
\begin{equation}\label{cbjequation1}
(J+\frac{\lambda}{2} I)(x^i-x^{i+1})=0, \ \forall \ 1\leq i\leq m-1 \ \&\  \ m+1\leq i\leq m+n-1
\end{equation}
and subtracting the $(m+1)$-th equation from the $m$-th equation, we have
\begin{equation}\label{cbjequation2}
\sum _{p=1}^{m-1}Jx^p -Jx^m+Jx^{m+1}-\sum _{q=m+2}^{m+n}Jx^q +\lambda (x^{m+1}-x^m)=0.
\end{equation}
{\bf Case II.} For $m> 1, n=1$,
\begin{equation}\label{secondcbjequation1}
(J+\frac{\lambda}{2} I)(x^i-x^{i+1})=0, \ \forall \ 1\leq i\leq m-1
\end{equation}
and subtracting the $(m+1)$-th equation from the $m$-th equation, we have
\begin{equation}\label{secondcbjequation2}
\sum _{p=1}^{m-1}Jx^p -Jx^m+Jx^{m+1}+\lambda (x^{m+1}-x^m)=0.
\end{equation}
We are interested in the Cartesian product $K_{m,n}\times G$ where either $K_{m,n}$ or $G$ is of non-QE class. Otherwise, the quadratic embedding constant is zero by Theorem \ref{0qec}. Thus, it suffices to consider $\lambda >0$, and from \eqref{cbjequation1}, \eqref{secondcbjequation1},
\begin{equation}\label{lambda1}
x^i=x^{i+1}, \forall \ 1\leq i\leq m-1 \ \&\  \ m+1\leq i\leq m+n-1
\end{equation}
and
\begin{equation}\label{lambda2}
x^i=x^{i+1}, \forall \ 1\leq i\leq m-1,
\end{equation}
respectively. Also, recall that $\qec (G)=\max \{a: (x,a,\eta)\in \mathcal{S}(D_G)\}$, where $\mathcal{S}(D_G)$ is the set of all stationary points $(x,a,\eta)\in \mathbb{R}^l\times \mathbb{R} \times \mathbb{R}$ satisfying
\begin{equation}\label{cbaequation}
(D_G-aI)x=\frac{\eta}{2}\  e,\ \ \  \langle x,x \rangle =1,\ \ \  \langle e,x \rangle =0.
\end{equation}
Keeping these relations in hand, we now prove Theorem \ref{theorem5}.
\begin{proof}[Proof of Theorem~\ref{theorem5} (i)]
Without loss of generality assume that $m\geq n$. By Theorem \ref{qeccbip}, $K_{m,n}$ is of QE class if and only if either $m=n=2$ or $m\geq 1,n=1$. Thus, we consider two cases.\\
{\bf Case I.} $m=n=2$:  From \eqref{cbjequation2} and \eqref{lambda1}, $x^1=x^2=x^3=x^4$. This reduces the equations \eqref{cbmainequation}-\eqref{cbzero} as follows.
\begin{equation}\label{modifiedequation}
(4D_G-\lambda I)x^1 =\frac{\mu}{2}\  e,\ \ \  4\langle x^1,x^1 \rangle =1,\ \ \  \langle e,x^1 \rangle =0.
\end{equation}
Using \eqref{cbaequation} and \eqref{modifiedequation}, we have
\begin{align*}
 \qec (K_{2,2}\times G) &=\max \{\lambda :(({x^1}^T,{x^2}^T,{x^3}^T,{x^{4}}^T)^T,\lambda ,\mu)\in \mathcal{S}(D_{K_{2,2}\times G})\}\\
 &=\max \{4a :(x,a,\eta)\in \mathcal{S}(D_G)\}\\
 &=4\qec (G).
\end{align*}
{\bf Case II.} $m\geq 1,n=1$: For $m=n=1$, by Theorem \ref{theorem4}, $\qec(K_{1,1}\times G)=2\qec(G)$. Suppose $m>1,n=1$. By equation \eqref{lambda2}, $x^i=x^{i+1}$ for all $i\in [m-1]$ and equation \eqref{secondcbjequation2} implies that
\begin{equation}\label{mjequation}
    (m-2)Jx^m+Jx^{m+1}+\lambda (x^{m+1}-x^m)=0.
\end{equation}
Also, using \eqref{cbzero}, we get
\begin{equation}\label{modifiedjequation}
    mJx^m+Jx^{m+1}=0.
\end{equation}
Multiplying equations \eqref{mjequation} and \eqref{modifiedjequation} by $m+1$ and $(m-2)+1$ respectively, and  then computing their difference, we get
\begin{equation*}
    \{J+\frac{\lambda }{2} (m+1)I\}(x^m-x^{m+1})=0.
\end{equation*}
Since $\lambda >0$, $x^m=x^{m+1}$. So, equations \eqref{cbmainequation}-\eqref{cbzero} reduce to
\begin{equation*}\label{modifiedequation2}
\{(m+1)D_G-\lambda I\}x^1 =\frac{\mu}{2}\  e,\ \ \  (m+1)\langle x^1,x^1 \rangle =1,\ \ \  \langle e,x^1 \rangle =0,
\end{equation*}
and using \eqref{cbaequation} we have
\begin{align*}
 \qec (K_{m,1}\times G) &=\max \{\lambda :(({x^1}^T,{x^2}^T,\ldots ,{x^{m+1}}^T)^T,\lambda ,\mu)\in \mathcal{S}(D_{K_{m,1}\times G})\}\\
 &=\max \{(m+1)a :(x,a,\eta)\in \mathcal{S}(D_G)\}\\
 &=(m+1)\qec (G).
\end{align*}
\end{proof}
\begin{proof}[Proof of Theorem~\ref{theorem5} (ii)] By \eqref{lambda1}, $x^i=x^{i+1}$ for all $1\leq i\leq m-1 \ \&\  \ m+1\leq i\leq m+n-1$ and from \eqref{cbjequation2}, we have
\begin{equation}\label{mnjequation}
    (m-2)Jx^m-(n-2)Jx^{m+1}+\lambda (x^{m+1}-x^m)=0.
\end{equation}
From \eqref{cbzero}, we have
\begin{equation}\label{modifiedjequation2}
    mJx^m+nJx^{m+1}=0.
\end{equation}
Now, multiplying \eqref{mnjequation} and \eqref{modifiedjequation2} by $m+n$ and $(n-2)-(m-2)$ respectively and adding them, we  get
\begin{equation}\label{mjequation5}
    \{J-\lambda \frac{m+n}{m(n-2)+n(m-2)}I\}(x^m-x^{m+1})=0.
\end{equation}
We now divide the remaining proof into two cases.\\
{\bf Case I.} Suppose $\lambda \neq l\frac{n(m-2)+m(n-2)}{m+n}$. Then, from \eqref{mjequation5}, $x^m=x^{m+1}$. Again, equations \eqref{cbmainequation}-\eqref{cbzero} reduce to
\begin{equation*}\label{modifiedequation3}
\{(m+n)D_G-\lambda I\}x^1 =\frac{\mu}{2}\  e,\ \ \  (m+n)\langle x^1,x^1 \rangle =1,\ \ \  \langle e,x^1 \rangle =0,
\end{equation*}
and from \eqref{cbaequation}, we have
\begin{align*}
 \qec (K_{m,n}\times G) &=\max \{\lambda :(({x^1}^T,{x^2}^T,\ldots ,{x^{m+n}}^T)^T,\lambda ,\mu)\in \mathcal{S}(D_{K_{m,n}\times G})\}\\
 &=\max \{(m+n)a :(x,a,\eta)\in \mathcal{S}(D_G)\}\\
 &=(m+n)\qec (G).
\end{align*}
{\bf Case II.} Suppose $\lambda = l\frac{n(m-2)+m(n-2)}{m+n}$. Take
\[x^1=x^2=\cdots =x^m=-\frac{\sqrt{n}}{\sqrt{ml(m+n)}}\ e,~~
x^{m+1}=x^{m+2}=\cdots =x^{m+n}=\frac{\sqrt{m}}{\sqrt{nl(m+n)}}\ e,
\]
and
\begin{equation*}
\mu =\Big (\frac{n-m}{m+n} \Big )\sqrt{\frac{mnl}{m+n}}.
\end{equation*}
By a simple calculation, one can see that the equations \eqref{cbmainequation}-\eqref{cbzero} are satisfied by the above choice of $(({x^1}^T,{x^2}^T,\ldots ,{x^{m+n}}^T)^T,\lambda ,\mu)\in \mathbb{R}^{(m+n)l}\times \mathbb{R}\times \mathbb{R}$. Hence,
\begin{center}
$\qec (K_{m,n}\times G)=\max \{(m+n)\qec (G),l\frac{n(m-2)+m(n-2)}{m+n}\}$.
\end{center}
\end{proof}
	We conclude this section by presenting a few consequences of Theorem \ref{theorem5}. 
\begin{cor}
Let $G=K_{p_1,p_2,\ldots ,p_k}$ and $p_1\geq p_2\geq \ldots \geq p_k$. Then, from Theorem \ref{theorem5} and \cite[Theorem 1.2]{bata}, we can conclude that for all $ m\geq 3,n\geq 2$,
\begin{align*}
 &\qec (K_{m,n}\times K_{p_1,p_2,\ldots ,p_k})\\
 &=\begin{cases}
\max \{(m+n)\frac{p_2(p_1-2)+p_1(p_2-2)}{p_1+p_2},(p_1+p_2)\frac{n(m-2)+m(n-2)}{m+n}\}, & \mbox{if }k=2, p_1\geq 3, p_2\geq 2, \\
\max \{(m+n)\frac{(p_1-2)(k-2)-2}{p_1+k-1},(p_1+k-1)\frac{n(m-2)+m(n-2)}{m+n}\}, & \mbox{if }\begin{cases}
   \{k\geq 3, p_1\geq 5, p_2=p_3=\cdots =p_k\} \mbox{ or,} \\
   \{k\geq 4, p_1= 4, p_2=p_3=\cdots =p_k\} \mbox{ or,} \\
   \{k\geq 5, p_1= 3, p_2=p_3=\cdots =p_k\},
\end{cases}, \\
(p_1+p_2+\ldots +p_k)\frac{n(m-2)+m(n-2)}{m+n} , & \mbox{otherwise}.
\end{cases}
\end{align*}
\end{cor}
\begin{ex}
Let $G$ be the $p$-fold edge amalgamation of $K_3$, that is, the tripartite graph $K_{p,1,1}$. Then $\qec (K_{p,1,1})=\frac{p-4}{p+2}\ \forall \ p\geq 1$ \cite[Theorem 2.9]{zakiyyah}. Thus, for all $ m\geq 3,n\geq 2$,
\begin{align*}
 \qec (K_{m,n}\times K_{p,1,1}) &=\max \{(m+n)\frac{p-4}{p+2},(p+2)\frac{n(m-2)+m(n-2)}{m+n}\}\\
 &=(p+2)\frac{n(m-2)+m(n-2)}{m+n}.
\end{align*}
\end{ex}
\section*{Acknowledgments}
P.N. Choudhury was partially supported by Prime Minister Early Career Research Grant (PMECRG) ANRF/ECRG/2024/002674/PMS (ANRF, Govt. of India), INSPIRE Faculty Fellowship research grant DST/INSPIRE/04/2021/002620 (DST, Govt. of India), and IIT Gandhinagar Internal Project: IP/IP/50025. R. Nandi was partially supported by IIT Gandhinagar Post-Doctoral Fellowship IP/IP/50020, Institute Seed Research Grant (ISRG), Indian Institute of Information Technology Design and Manufacturing, Kurnool.

\end{document}